\documentclass[11pt]{amsart}

\usepackage[margin=1.25in]{geometry}
\usepackage[T1]{fontenc}
\usepackage{lmodern}
\usepackage{microtype}

\usepackage{amsmath,amssymb,amsthm,mathtools}
\numberwithin{equation}{section}
\allowdisplaybreaks

\usepackage{enumitem}
\usepackage[hidelinks]{hyperref}

\usepackage{tikz}
\usetikzlibrary{arrows.meta,positioning,calc}

\newtheorem{theorem}{Theorem}[section]
\newtheorem{proposition}[theorem]{Proposition}
\newtheorem{lemma}[theorem]{Lemma}
\newtheorem{corollary}[theorem]{Corollary}

\theoremstyle{definition}
\newtheorem{definition}[theorem]{Definition}

\theoremstyle{remark}
\newtheorem{remark}[theorem]{Remark}

\title[Sampling and interpolation in mixed-norm Fock spaces]
{Sampling and Interpolation in Gaussian Mixed-Norm Fock Spaces}

\author[X. Fang]{Xiang Fang}
\address{Department of Applied Mathematics\\
National Yang Ming Chiao Tung University\\
Hsinchu 30010, Taiwan}
\email{xfang@nycu.edu.tw}

\author[P. T. Tien]{Pham Trong Tien}
\address{VNU University of Science\\
Vietnam National University, Hanoi\\
Vietnam}
\email{phamtien@vnu.edu.vn}

\date{}

\subjclass[2020]{Primary 30H20; Secondary 42B35, 46A16, 47B37}

\keywords{Fock spaces, radial--angular mixed norms, sampling,
interpolation, Beurling density, quasi-Banach spaces, localized matrices}

\hypersetup{
  pdftitle={Sampling and Interpolation in Gaussian Mixed-Norm Fock Spaces},
  pdfauthor={Xiang Fang and Pham Trong Tien},
  pdfsubject={Sampling and interpolation in radial--angular Gaussian
    mixed-norm Fock spaces},
  pdfkeywords={Fock spaces, radial--angular mixed norms, sampling,
    interpolation, Beurling density, quasi-Banach spaces, localized matrices}
}

\begin{document}

\begin{abstract}
We establish sharp sampling and interpolation criteria for the
radial--angular Gaussian mixed-norm Fock spaces
$\mathcal F_\alpha^{p,q}$, where $\alpha>0$ and
$0<p,q\le\infty$. The natural point-evaluation scale leads to the
normalized restriction
\[
R_{\alpha,\Lambda}^{p,q}f
=
\bigl(
f(\lambda)e^{-\frac{\alpha}{2}|\lambda|^2}
(1+|\lambda|)^{\frac{1}{q}-\frac{1}{p}}
\bigr)_{\lambda\in\Lambda}.
\]
With this normalization, the critical density is the classical value
$\frac{\alpha}{\pi}$ throughout the Banach and quasi-Banach ranges.

\smallskip
\noindent
For an arbitrary locally finite set $\Lambda$, sampling is equivalent to
\[
D_{\mathrm{sep}}^{-}(\Lambda)>\frac{\alpha}{\pi},
\]
together with relative separation when $p<\infty$; no
relative-separation condition is required when $p=\infty$. Here
$D_{\mathrm{sep}}^{-}$ is the supremum of the lower Beurling densities
over separated subsets of $\Lambda$. Interpolation is equivalent to
separation and
\[
D^{+}(\Lambda)<\frac{\alpha}{\pi}.
\]
The same criterion characterizes interpolation for the little endpoint
$f_\alpha^{p,\infty}$, and in both settings the normalized restriction
map admits a bounded linear right inverse.

\smallskip
\noindent
The sufficiency arguments are based on rapidly localized Hilbert dual
and Lagrange atoms together with a weighted localized synthesis theorem
on the full mixed-norm scale. For necessity, we prove that lower
stability of a rapidly localized matrix on a weighted annular mixed
sequence space implies lower stability on $\ell^2$. The proof first
passes to $\ell^\infty$ by translated polynomial cutoffs and a
commutator estimate, and then uses the $p$-independence theorem for the
Sjöstrand class. Applied through a fixed Hilbert sampling lattice, this
reduces the strict density conditions to the classical Hilbert Fock
sampling and interpolation theorems.
\end{abstract}

\maketitle

\section{Introduction and main results}
\label{S:Introduction}

\subsection{Classical density and the radial--angular scale}
\label{SS:IntroductionClassicalMixedScale}

Sampling and interpolation are two fundamental discretization problems for
analytic function spaces.  Sampling asks when point values determine every
function stably, while interpolation asks when arbitrary admissible data can
be prescribed with controlled norm.  For the classical Bargmann--Fock
space, the sharp answer is given by the density theorems of Seip,
Wallst\'en, and Lyubarskii: sampling requires strict lower density above the
critical value, whereas interpolation requires strict upper density below
it; see \cite{Lyubarskii1992,Seip1992,SeipWallsten1992,Zhu2012}.  With Gaussian parameter $\alpha>0$, the critical
density is
\[
\frac{\alpha}{\pi}.
\]

\smallskip
\noindent
The purpose of this paper is to determine the corresponding discrete
geometry for the radial--angular mixed-norm Fock spaces.  
For an entire function $f$ and $r\ge 0$, define
\[
M_p(f,r)
:=
\begin{cases}
\displaystyle
\left(
\frac{1}{2\pi}
\int_0^{2\pi}
|f(re^{i\theta})|^p\,d\theta
\right)^{\frac{1}{p}},
& 0<p<\infty,\\[3mm]
\displaystyle
\sup_{0\le\theta<2\pi}|f(re^{i\theta})|,
& p=\infty.
\end{cases}
\]
For $\alpha>0$ and $0<p,q\le\infty$, the mixed-norm Fock space
$\mathcal F_\alpha^{p,q}$ consists of all entire functions $f$ such that
\[
\|f\|_{\mathcal F_\alpha^{p,q}}<\infty,
\]
where, for $0<q<\infty$,
\begin{equation*}
\label{eq:intro-mixed-norm}
\|f\|_{\mathcal F_\alpha^{p,q}}
:=
\left(
\alpha q
\int_0^\infty
M_p(f,r)^q
e^{-\frac{\alpha q}{2}r^2}
r\,dr
\right)^{\frac{1}{q}},
\end{equation*}
and, for $q=\infty$,
\[
\|f\|_{\mathcal F_\alpha^{p,\infty}}
:=
\sup_{r\ge0}
M_p(f,r)e^{-\frac{\alpha}{2}r^2}.
\]
At the outer endpoint $q=\infty$, we also use the little mixed-norm
Fock space
\[
f_\alpha^{p,\infty}
:=
\left\{
f\in\mathcal F_\alpha^{p,\infty}:
M_p(f,r)e^{-\frac{\alpha}{2}r^2}\longrightarrow0
\ \text{as } r\to\infty
\right\}.
\]

\smallskip
\noindent
The two exponents encode different geometric features.  The inner exponent
$p$ measures angular integrability on each circle, whereas the outer
exponent $q$ measures radial accumulation of the resulting circle means. When $p=q$, we write
\[
\mathcal F_\alpha^p:=\mathcal F_\alpha^{p,p},
\]
which is the classical one-parameter Fock space.  For $p\ne q$, angular concentration and radial accumulation are
measured independently, and the norm is no longer described by a single
planar Lebesgue exponent.

\smallskip
\noindent
This radial--angular structure also determines the natural discrete model.
Given a locally finite set $\Lambda\subset\mathbb C$, let
\[
A_k:=\{z\in\mathbb C:k\le |z|<k+1\},
\qquad
\Lambda_k:=\Lambda\cap A_k,
\qquad k\ge0.
\]
For a sequence $a=(a_\lambda)_{\lambda\in\Lambda}$, set
\[
\|a|_{\Lambda_k}\|_{\ell^p(\Lambda_k)}
:=
\begin{cases}
\displaystyle
\left(\sum_{\lambda\in\Lambda_k}|a_\lambda|^p\right)^{\frac{1}{p}},
& 0<p<\infty,\\[3mm]
\displaystyle
\sup_{\lambda\in\Lambda_k}|a_\lambda|,
& p=\infty,
\end{cases}
\]
with the convention that the norm of an empty block is zero.
For $0<p,q\le\infty$, the annular mixed sequence space
$\ell^{p,q}(\Lambda)$ consists of all sequences
$a=(a_\lambda)_{\lambda\in\Lambda}$ such that
\[
\|a\|_{\ell^{p,q}(\Lambda)}<\infty,
\]
where
\begin{equation}
\label{eq:intro-annular-sequence}
\|a\|_{\ell^{p,q}(\Lambda)}
:=
\begin{cases}
\displaystyle
\left(
\sum_{k=0}^{\infty}
\|a|_{\Lambda_k}\|_{\ell^p(\Lambda_k)}^q
\right)^{\frac{1}{q}},
& 0<q<\infty,\\[4mm]
\displaystyle
\sup_{k\ge0}
\|a|_{\Lambda_k}\|_{\ell^p(\Lambda_k)},
& q=\infty.
\end{cases}
\end{equation}
For $q=\infty$, the corresponding little sequence space is
\[
\ell_0^{p,\infty}(\Lambda)
:=
\left\{
a\in\ell^{p,\infty}(\Lambda):
\|a|_{\Lambda_k}\|_{\ell^p(\Lambda_k)}
\longrightarrow0
\ \text{as }k\to\infty
\right\}.
\]
In particular,
\[
\ell^{p,p}(\Lambda)=\ell^p(\Lambda).
\]
Thus the relevant sequence space is origin-centered and annular rather than
Cartesian. The annular organization is tied to the origin and is not translation
invariant. This is one
of the main structural differences from both the classical one-exponent
Fock theory and the Cartesian mixed norms arising in time-frequency
analysis.

\smallskip
\noindent
Throughout, we use the convention $\frac{1}{\infty}=0$.

\subsection{Normalized restriction and the central question}
\label{SS:IntroductionNormalizationQuestion}

The first issue is to identify the natural scale of point evaluations.  The
structural theory developed in the companion paper \cite[Lemma~2.4]{FT26}
gives the sharp estimate
\begin{equation}
\label{eq:intro-point-evaluation}
|f(z)|e^{-\frac{\alpha}{2}|z|^2}
\lesssim_{\alpha,p,q}
(1+|z|)^{\frac{1}{p}-\frac{1}{q}}
\|f\|_{\mathcal F_\alpha^{p,q}}.
\end{equation}
Thus Gaussian normalization alone does not put point evaluations on a
uniform scale as $|z|\to\infty$. The additional polynomial factor leads
naturally to the following normalization.

\smallskip
\noindent
For a locally finite set $\Lambda\subset\mathbb C$, define the
normalized restriction map
\begin{equation}
\label{eq:intro-restriction}
R_{\alpha,\Lambda}^{p,q}f
:=
\left(
f(\lambda)e^{-\frac{\alpha}{2}|\lambda|^2}
(1+|\lambda|)^{\frac{1}{q}-\frac{1}{p}}
\right)_{\lambda\in\Lambda}.
\end{equation}
We take $\ell^{p,q}(\Lambda)$ as its natural target sequence space.

\smallskip
\noindent
We say that $\Lambda$ is \emph{sampling} for
$\mathcal F_\alpha^{p,q}$ if there exist constants
$C_1,C_2>0$, depending only on $\alpha,p,q$ and $\Lambda$, such that
\[
C_1\|f\|_{\mathcal F_\alpha^{p,q}}
\le
\|R_{\alpha,\Lambda}^{p,q}f\|_{\ell^{p,q}(\Lambda)}
\le
C_2\|f\|_{\mathcal F_\alpha^{p,q}}
\]
for all $f\in\mathcal F_\alpha^{p,q}$, and
\emph{interpolating} for $\mathcal F_\alpha^{p,q}$ if the
corresponding normalized restriction map
\[
R_{\alpha,\Lambda}^{p,q}:
\mathcal F_\alpha^{p,q}
\longrightarrow
\ell^{p,q}(\Lambda)
\]
is bounded and onto.

\smallskip
\noindent
At the outer endpoint, we say that $\Lambda$ is \emph{interpolating}
for the little space $f_\alpha^{p,\infty}$ if the corresponding
normalized restriction map
\[
R_{\alpha,\Lambda}^{p,\infty}:
f_\alpha^{p,\infty}
\longrightarrow
\ell_0^{p,\infty}(\Lambda)
\]
is bounded and onto.

\smallskip
\noindent
The normalization in \eqref{eq:intro-restriction} depends explicitly on both
mixed exponents.  The central question is therefore whether this change of
scale also changes the density law:

\medskip

\begin{center}
\emph{After point values are normalized at their natural mixed-norm scale,
does the critical sampling and interpolation density remain
$\frac{\alpha}{\pi}$, or does it depend on $p$ and $q$?}
\end{center}

\medskip
\noindent
There is no formal reason for the classical threshold to persist.  Mixed
norms change both the admissible sequence space and the global organization
of integrability.  The main results below show, however, that in the present
Gaussian setting the normalization depends on $p$ and $q$, while the
critical density remains $\frac{\alpha}{\pi}$.

\subsection{Related work and context}

Classical Fock density theory has been extended in several directions.
Sampling and interpolation for more general weighted Fock-type spaces
were studied by Berndtsson and Ortega-Cerd\`a
\cite{BerndtssonOrtega1995}, Ortega-Cerd\`a and Seip
\cite{OrtegaCerdaSeip1998}, and Marco, Massaneda, and Ortega-Cerd\`a
\cite{MarcoMassanedaOrtega2003}. For rapidly growing radial weights,
corresponding sampling and interpolation problems were investigated by
Borichev, Dhuez, and Kellay \cite{BorichevDhuezKellay2007}. Related
density results for weighted Fock spaces in several complex variables
were obtained by Lindholm \cite{Lindholm2001} and, more recently, by
Gr\"ochenig, Haimi, Ortega-Cerd\`a, and Romero
\cite{GrochenigHaimiOrtegaRomero2019}. In these settings, the relevant
density is adapted to the geometry determined by the weight. The present
problem is different: the Gaussian kernel geometry remains fixed, while
the global integrability is reorganized from a single spatial exponent
into a radial--angular pair.

\smallskip
\noindent
The closest radial--angular analogue is the sampling and interpolation
theory for mixed-norm Bergman spaces developed by Nguyen and Luecking
\cite{NguyenLuecking2018}. In that setting, the density conditions
depend on the outer exponent. Thus passing to a radial--angular mixed
norm can genuinely change the density law of the underlying
one-parameter space. The disk and Gaussian Fock settings have different
geometries and density conventions, so their numerical thresholds are
not directly comparable. The relevant point here is conceptual: the
persistence of the classical value $\frac{\alpha}{\pi}$ in the Gaussian
mixed-norm scale is a theorem, not a formal consequence of the
definition.

\smallskip
\noindent
Mixed-norm Fock spaces, together with closely related radial weighted
Fock spaces, have also been studied from
several structural and operator-theoretic viewpoints. These include
Littlewood--Paley and norm-equivalence questions, Fock projections,
Hausdorff-type operators, and random entire functions; see, for example,
\cite{BG24,CP16,FT23,Liu2024,Liu2025,
LiuShiWei2027}. These works concern the function-space structure or
specific operators, whereas the present paper studies the discrete
geometry of sampling and interpolation.

\smallskip
\noindent
We also distinguish the present spaces from another use of the term
``mixed-norm Fock space'' in time-frequency analysis. Under the
Bargmann transform, modulation spaces correspond to spaces of entire
functions with Cartesian mixed Lebesgue norms in phase-space
coordinates; see \cite{SignahlToft2012}. Those norms are adapted to
translations in the underlying time-frequency geometry. Here the mixed
norm is instead obtained by first taking angular means on Euclidean
circles and then a radial norm, leading to the origin-centered annular
sequence geometry \eqref{eq:intro-annular-sequence}.

\smallskip
\noindent
The companion paper \cite{FT26} develops the intrinsic
function-space theory used below, including point evaluations, annular
discretization, kernel estimates, approximation, and Gaussian pairing
estimates. Building on these inputs, the present paper develops
localized synthesis and stability-transfer tools for the sampling and
interpolation geometry of the mixed-norm Fock scale.

\smallskip
\noindent
To our knowledge, no previous work gives sharp sampling and
interpolation density characterizations for the radial--angular
Gaussian mixed-norm spaces $\mathcal F_\alpha^{p,q}$ throughout the
full range $0<p,q\le\infty$, including the quasi-Banach regime and the
little endpoint $f_\alpha^{p,\infty}$.

\subsection{Main results}
\label{SS:IntroductionMainResults}

We first introduce the geometric quantities entering the statements. For \(z\in\mathbb C\) and \(R>0\), let \(B(z,R)\) denote the open Euclidean disk with center \(z\) and radius \(R\). A set
$\Lambda\subset\mathbb C$ is \emph{separated} if
\[
\inf_{\substack{\lambda,\mu\in\Lambda\\ \lambda\ne\mu}}
|\lambda-\mu|>0,
\]
and \emph{relatively separated} if
\[
\sup_{z\in\mathbb C}
\#\bigl(\Lambda\cap B(z,1)\bigr)<\infty.
\]
For a locally finite set $\Lambda\subset\mathbb C$, define the standard
lower and upper Beurling densities (see, for example,
\cite{Seip1992,SeipWallsten1992,Zhu2012}) by
\[
D^-(\Lambda)
:=
\liminf_{R\to\infty}
\inf_{z\in\mathbb C}
\frac{\#(\Lambda\cap B(z,R))}{\pi R^2}
\qquad \text{and} \qquad
D^+(\Lambda)
:=
\limsup_{R\to\infty}
\sup_{z\in\mathbb C}
\frac{\#(\Lambda\cap B(z,R))}{\pi R^2}.
\]
For arbitrary locally finite sampling configurations, clusters may increase
the raw counting density without adding independent sampling information.
We therefore introduce the separated lower density
\begin{equation*}
\label{eq:intro-separated-density}
D^-_{\mathrm{sep}}(\Lambda)
:=
\sup\bigl\{
D^-(\Gamma):
\Gamma\subset\Lambda,\ \Gamma\ \text{separated}
\bigr\}.
\end{equation*}
If $\Lambda$ is separated, then
\[
D^-_{\mathrm{sep}}(\Lambda)=D^-(\Lambda).
\]

\smallskip
\noindent
Our first result gives the complete sampling characterization for arbitrary
locally finite configurations.

\begin{theorem}[Sampling theorem]
\label{thm:intro-sampling}
Let $\alpha>0$, $0<p,q\le\infty$, and let
$\Lambda\subset\mathbb C$ be locally finite. Then $\Lambda$ is sampling for
$\mathcal F_\alpha^{p,q}$ if and only if
\[
\begin{cases}
\Lambda \text{ is relatively separated and }
D^-_{\mathrm{sep}}(\Lambda)>\dfrac{\alpha}{\pi},
& 0<p<\infty,\\[2mm]
D^-_{\mathrm{sep}}(\Lambda)>\dfrac{\alpha}{\pi},
& p=\infty.
\end{cases}
\]
\end{theorem}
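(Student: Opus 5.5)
We will prove the two directions separately, following the strategy announced in the abstract: sufficiency is handled by localized synthesis with Hilbert dual atoms, and necessity by transferring lower stability to the Hilbert space $\mathcal F_\alpha^2$. Throughout, we use from \cite{FT26} the point-evaluation estimate \eqref{eq:intro-point-evaluation}, the annular discretization of the $\mathcal F_\alpha^{p,q}$ norm, and the off-diagonal Gaussian kernel estimates.

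\emph{Upper inequality and relative separation.} For $p<\infty$, we first show that the upper sampling bound holds if and only if $\Lambda$ is relatively separated. For sufficiency, a subharmonicity estimate controls $|f(\lambda)|^pe^{-\frac{p\alpha}{2}|\lambda|^2}$ by the average over $B(\lambda,1)$. Summing over $\Lambda_k$ with bounded overlap gives a bound by the $L^p$ mass of the annulus $\{k-1\le|z|<k+2\}$. This mass is comparable to $(1+k)\,M_p(f,r)^p e^{-\frac{p\alpha}{2}r^2}$ averaged over $r$ in that range, and the factor $(1+k)^{1/p}$ is exactly cancelled by the normalization $(1+|\lambda|)^{\frac1q-\frac1p}$ up to the $\frac1q$ power, which is absorbed by the radial measure $r\,dr$. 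For necessity, we test on normalized reproducing kernels $k_z$. Their $\mathcal F^{p,q}_\alpha$ norm is comparable to $(1+|z|)^{\frac1p-\frac1q}$ by \cite{FT26}. If $\#(\Lambda\cap B(z,1))$ were unbounded, the lower bound on each kernel term would force the sequence norm to blow up. For $p=\infty$ the upper bound is immediate from \eqref{eq:intro-point-evaluation}, so no separation is needed. Moreover, the lower bound is monotone in $\Lambda$, so in every case we may reduce to a separated subset $\Gamma$ with $D^-(\Gamma)>\alpha/\pi$.

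\emph{Sufficiency of the density condition.} Fix a separated $\Gamma\subset\Lambda$ with $D^-(\Gamma)>\alpha/\pi$. By Seip--Wallst\'en, $\Gamma$ is sampling for $\mathcal F_\alpha^2$, and hence admits a dual frame $(g_\gamma)$ of $(k_\gamma)$. We will show that these Hilbert dual atoms are rapidly localized, i.e.
\[
|g_\gamma(z)|e^{-\frac\alpha2|z|^2}\lesssim(1+|z-\gamma|)^{-N}.
\]
The route is that the frame operator is a localized matrix, and its inverse is again localized by Jaffard/Sjöstrand-type inverse closedness. The reconstruction is then $f=\sum_\gamma f(\gamma)e^{-\frac\alpha2|\gamma|^2}g_\gamma$. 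Next, the weighted localized synthesis theorem says that $c\mapsto\sum c_\gamma(1+|\gamma|)^{\frac1p-\frac1q}g_\gamma$ is bounded from $\ell^{p,q}(\Gamma)$ to $\mathcal F^{p,q}_\alpha$, in the quasi-Banach range as well, using $N$ large. Applying it to $c=R^{p,q}_{\alpha,\Gamma}f$ gives the lower inequality. A preliminary density step is required: the reproducing formula holds first for $f$ in a dense class, and for $q=\infty$ or $p=\infty$ one extends using weak-$*$ approximation or dilations $f(rz)$ as in \cite{FT26}.

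\emph{Necessity of the density condition (main obstacle).} Suppose $\Lambda$ is sampling; for $p=\infty$ we first extract a separated $\Gamma$ that is still sampling. Since the lower bound is monotone, we cannot simply pass to arbitrary subsets. Instead, we choose a maximal $\delta$-separated subset and use Bernstein/Cauchy estimates: for small $\delta$, points of $\Lambda$ within distance $\delta$ of a point of $\Gamma$ carry values comparable to those at that point, so $\Gamma$ remains sampling. We then fix a Hilbert sampling lattice $\Theta$ of density greater than $\alpha/\pi$, whose atoms synthesize $\mathcal F^{p,q}_\alpha$. Composing synthesis over $\Theta$ with restriction to $\Gamma$ yields a rapidly localized matrix $A$ that is lower stable on the weighted $\ell^{p,q}$, modulo the kernel of the synthesis map, which we handle by working with the projected matrix. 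The key step is to transfer this lower stability to $\ell^2$. We would pass to $\ell^\infty$ by multiplying by translated polynomial cutoffs $\chi_w$ adapted to balls $B(w,L)$. The commutator $[A,\chi_w]$ is small by localization. On the support of the cutoff, the annular weights and mixed norms are comparable to flat $\ell^\infty$ norms after rescaling, and this gives a uniform local lower bound, hence lower stability on $\ell^\infty$. Sjöstrand $p$-independence then yields lower stability on $\ell^2$. This makes $\Gamma$ sampling for $\mathcal F^2_\alpha$, and Seip's necessity theorem gives $D^-(\Gamma)>\alpha/\pi$, so $D^-_{\mathrm{sep}}(\Lambda)>\alpha/\pi$. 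The hardest point is this cutoff/commutator step: the annular weight is not translation invariant, so uniform local comparability must be checked both for balls near the origin and for balls far out, where annuli are nearly flat.
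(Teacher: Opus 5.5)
Your architecture is the paper's. Upper bound and relative separation come from kernel testing and subharmonicity. Sufficiency comes from a separated Hilbert sampling subset, rapidly localized dual atoms, weighted localized synthesis and dilation. Necessity comes from a separated carrier, a fixed Hilbert sampling lattice, polynomial cutoffs with a commutator estimate to reach $\ell^\infty$, and Shin--Sun $p$-independence to reach $\ell^2$.

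Two steps of the necessity argument, however, do not go through as written.

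First, you extract a separated carrier only when $p=\infty$. For $0<p<\infty$ the set $\Lambda$ is merely relatively separated. Even after showing it is sampling for $\mathcal F_\alpha^2$, a conclusion of the form $D^-(\Lambda)>\frac{\alpha}{\pi}$ says nothing about $D^-_{\mathrm{sep}}(\Lambda)$, because clusters inflate the counting density. Moreover, the form of the Hilbert density theorem you invoke, ``$D^-(\Gamma)>\frac{\alpha}{\pi}$'', is the one for separated $\Gamma$. You need the maximal $\delta$-separated extraction for every $p$. For $p<\infty$ this works because relative separation bounds, independently of $\delta$, two things: the number of $\lambda$ assigned to a given $\gamma\in\Gamma_\delta$, and the overlap of the disks $B(\gamma,2)$. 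The one-sided oscillation bound (normalized value at $\lambda$ is $\lesssim$ the value at $\gamma(\lambda)$ plus $\delta$ times the local $L^p$ mass on $B(\gamma(\lambda),2)$) then sums to $\|R_{\alpha,\Lambda}^{p,q}f\|^s\lesssim\|R_{\alpha,\Gamma_\delta}^{p,q}f\|^s+\delta^s\|f\|^s$. The error is absorbed for small $\delta$. Note that the values are not ``comparable'' pointwise; only this one-sided estimate plus absorption is available.

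Second, ``working with the projected matrix'' does not dispose of the kernel of the synthesis map $S^\varphi_\Theta$. Put $P_\Theta:=C_\Theta S^\varphi_\Theta$. Two steps need lower stability on the entire sequence space: the cutoff step, which applies the lower bound to $\chi_w b$ (a vector that leaves $\operatorname{Ran}P_\Theta$ even when $b$ lies in it), and the Shin--Sun theorem. But $AP_\Theta=A$ vanishes on $\ker P_\Theta=\ker S^\varphi_\Theta\neq\{0\}$, since the lattice is redundant.

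The paper's fix is the augmented matrix $c\mapsto\bigl(Ac,(I-P_\Theta)c\bigr)$ into the labeled union $\Gamma\sqcup\Theta$. It is rapidly localized. It is lower stable on all of $c_{00}(\Theta)$, since $c=P_\Theta c+(I-P_\Theta)c$ and $\|P_\Theta c\|\lesssim\|S^\varphi_\Theta c\|_{\mathcal F_\alpha^{p,q}}\lesssim\|Ac\|$ by the lower sampling bound on $\Gamma$. After the transfer one takes $c=C_\Theta f$, for which the second component vanishes.

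There are also some smaller slips:
\begin{itemize}
\item By (F3), $\|\kappa_{\alpha,z}\|_{\mathcal F_\alpha^{p,q}}\asymp(1+|z|)^{\frac1q-\frac1p}$, not $(1+|z|)^{\frac1p-\frac1q}$. Only with the correct sign are the normalized samples of $\kappa_{\alpha,z}/\|\kappa_{\alpha,z}\|$ of unit size on $B(z,1)$.
\item For $p=\infty$ and $q<\infty$, the upper bound is not immediate from the point-evaluation estimate, which controls samples one at a time. It follows instead from $\sup_{\lambda\in\Lambda_k}|f(\lambda)|e^{-\frac{\alpha}{2}|\lambda|^2}\le B_\infty(f;k)$ together with (F2).
\item For a redundant frame the Gramian is not invertible. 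Localization of the dual atoms must therefore go through the Moore--Penrose inverse, obtained by a contour integral around the spectral gap at $0$, rather than through the inverse of a localized matrix.
\end{itemize}
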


\smallskip
\noindent
Two features of the theorem are worth emphasizing. First,
$D^-_{\mathrm{sep}}$ measures the density of a separated sampling carrier
rather than the raw multiplicity of a clustered configuration. Second, the
local geometric condition changes at the inner endpoint $p=\infty$.
For $0<p<\infty$, nearby samples accumulate in the inner $\ell^p$-block
norm, and the upper sampling inequality forces relative separation. For
$p=\infty$, the inner block norm is a supremum, so local multiplicity is
invisible and no relative-separation condition is required.

\smallskip
\noindent
Interpolation has a uniform geometric characterization throughout the
mixed scale.

\begin{theorem}[Interpolation theorem]
\label{thm:main-interpolation}
Let $\alpha>0$, $0<p,q\le\infty$, and let
$\Lambda\subset\mathbb C$ be locally finite. Then $\Lambda$ is
interpolating for $\mathcal F_\alpha^{p,q}$ if and only if
\[
\Lambda \text{ is separated}
\qquad\text{and}\qquad
D^+(\Lambda)<\frac{\alpha}{\pi}.
\]
Moreover, for every $0<p\le\infty$, the same condition characterizes
interpolation for the little endpoint $f_\alpha^{p,\infty}$.
In both cases, the corresponding normalized restriction map admits a
bounded linear right inverse.
\end{theorem}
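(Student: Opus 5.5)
We prove sufficiency and necessity separately, following the strategy announced in the abstract.

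\emph{Sufficiency.} Assume $\Lambda$ is separated and $D^+(\Lambda)<\frac{\alpha}{\pi}$. By the classical Hilbert theory of Seip and Wallst\'en, $\Lambda$ is interpolating for $\mathcal F_\alpha^2$. From the Hilbert dual system we construct Lagrange-type atoms $g_\lambda\in\mathcal F_\alpha^2$ with $g_\lambda(\mu)e^{-\frac{\alpha}{2}|\mu|^2}=\delta_{\lambda\mu}$. These atoms are rapidly localized in the Gaussian sense:
\[
|g_\lambda(z)|e^{-\frac{\alpha}{2}|z|^2}\lesssim_N (1+|z-\lambda|)^{-N}.
\]
To get this decay, first perturb $\Lambda$ to a slightly denser separated set that is still interpolating. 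Then apply Jaffard/Sj\"ostrand-type inverse-closedness to the Gram matrix of normalized reproducing kernels, which decays like $e^{-\frac{\alpha}{2}|\lambda-\mu|^2}$. Inverse-closedness shows that the dual/interpolating functions inherit polynomial (in fact subexponential) off-diagonal decay.

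For data $a\in\ell^{p,q}(\Lambda)$ we define the extension
\[
Ea:=\sum_\lambda a_\lambda(1+|\lambda|)^{\frac1p-\frac1q}g_\lambda .
\]
The key step is a weighted localized synthesis theorem: for $0<p,q\le\infty$ and $N$ large, the operator $E$ maps $\ell^{p,q}(\Lambda)$ boundedly into $\mathcal F_\alpha^{p,q}$, and maps $\ell_0^{p,\infty}(\Lambda)$ into $f_\alpha^{p,\infty}$. To prove it, discretize the mixed norm annularly as in \cite{FT26}. The angular measure on the circle of radius $r$ satisfies $d\theta\approx r^{-1}\,ds$ (arclength), and this is exactly what the weight $(1+|\lambda|)^{\frac1p-\frac1q}$ compensates. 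A kernel with $(1+|z-\lambda|)^{-N}$ decay couples annulus $k$ only to annuli $j$ with weight $(1+|j-k|)^{-N+2}$. Along each circle, the inner $\ell^p$ Young/Schur estimate holds for $p\ge1$; for $p<1$ we use the $p$-triangle inequality together with the decay. Summing these estimates gives the outer $\ell^q$ bound.

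Since $R^{p,q}_{\alpha,\Lambda}Ea=a$ by the Lagrange property, $E$ is the required bounded linear right inverse, and surjectivity follows. Boundedness of $R^{p,q}_{\alpha,\Lambda}$ on separated $\Lambda$ follows from the pointwise estimate \eqref{eq:intro-point-evaluation}, applied in its local subharmonic form. In the little space, finitely supported data are dense in $\ell_0^{p,\infty}$, so we use density there.

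\emph{Necessity.} Separation is standard. Surjectivity combined with the open mapping theorem (valid for quasi-Banach spaces) gives $\|f\|\lesssim\|a\|$ with $f(\mu)=0$ for all $\mu\neq\lambda$ and $f(\lambda)\ne0$. A local Cauchy estimate then shows that two points of $\Lambda$ cannot be arbitrarily close.

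For the strict density bound, fix a lattice $\Gamma$ of density larger than $\frac{\alpha}{\pi}$, which is a Hilbert sampling set. Represent the map $a\mapsto R_{\alpha,\Gamma}(Ea)$, where $E$ is a bounded right inverse furnished by the open mapping theorem, through the localized reproducing formula of $\Gamma$. This reduces interpolation to lower stability of a rapidly localized matrix adjoint. Concretely, the matrix
\[
\bigl(e^{-\frac{\alpha}{2}|\lambda|^2-\frac{\alpha}{2}|\gamma|^2}K(\lambda,\gamma)\bigr)_{\lambda\in\Lambda,\ \gamma\in\Gamma}
\]
acting from the weighted $\ell^{p',q'}$ or $\ell^{p,q}$ space must be bounded below in a suitable dual formulation, and the theorem stated in the abstract transfers this lower stability to $\ell^2$. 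Lower stability on $\ell^2$ means $\Lambda$ is interpolating for $\mathcal F_\alpha^2$, and the Seip--Wallst\'en theorem then gives $D^+(\Lambda)<\frac{\alpha}{\pi}$.

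In the quasi-Banach case duality is unavailable, so the formulation must be chosen without it. We use the equivalent statement that the Riesz-type sequence of normalized kernels satisfies a lower synthesis bound: the map $a\mapsto \sum_\lambda a_\lambda k_\lambda$ is bounded below from $\ell^{p,q}$. This follows from the Lagrange system $g_\lambda$ constructed from $E$.

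\emph{Main obstacle.} The hardest step is the stability transfer from the annular weighted space to $\ell^2$. The annular structure is not translation invariant, so the Sj\"ostrand-class $p$-independence theorem cannot be applied directly. We first pass to $\ell^\infty$. Localize with translated polynomial cutoffs $\psi_z(\lambda)=(1+|\lambda-z|/R)^{-M}$. On the support region of radius about $R$ around $z$, the weights $(1+|\lambda|)^{\frac1p-\frac1q}$ and the annular blocks are comparable to constants times a uniform $\ell^p$ norm of that size. Commutator estimates $\|[A,\psi_z]\|=O(R^{-1})$, which use the off-diagonal decay, then yield $\|a\|_\infty\lesssim\|Aa\|_\infty$. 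Finally, the $p$-independence theorem (lower stability on one $\ell^p$ implies it on all, including $\ell^2$) closes the argument.
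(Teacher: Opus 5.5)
Your sufficiency argument and your plan for the stability transfer (translated polynomial cutoffs, a commutator bound, passage to $\ell^\infty$, then Shin--Sun $p$-independence) match the paper. The perturbation of $\Lambda$ is unnecessary: Hilbert interpolation already makes $G_\Lambda$ invertible, and inverse-closedness applies to $G_\Lambda^{-1}$ directly.

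\textbf{The gap is in the necessity of $D^+(\Lambda)<\frac{\alpha}{\pi}$.} It sits at exactly the step that must produce the hypothesis of the transfer theorem, namely a lower-stability estimate for a rapidly localized matrix on a weighted annular mixed space. There are two problems.
\begin{enumerate}
\item The open mapping theorem does not furnish a bounded linear right inverse $E$. It only gives, for each datum $a$, some preimage $f_a$ with $\|f_a\|_{\mathcal F_\alpha^{p,q}}\lesssim\|a\|_{\ell^{p,q}(\Lambda)}$ (Remark~\ref{rem:uniform-interpolation}). So $a\mapsto R_{\alpha,\Gamma}(Ea)$ is not a linear map with a matrix or an adjoint. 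Even a linear $E$ would not be localized.
\item Your quasi-Banach fallback does not hold up. A lower synthesis bound for the kernels on the primal scale $\ell^{p,q}$ does not follow from a non-localized Lagrange system. Pairing preimages with kernel sums only gives identities $\sum_\lambda a_\lambda\overline{b_\lambda}=\langle f_a,g_b\rangle_\alpha$, where $g_b=\sum_\lambda b_\lambda(1+|\lambda|)^{\nu(p,q)}\kappa_{\alpha,\lambda}$. These control $b$ in the norming dual of $\ell^{p,q}$, i.e.\ on the conjugate scale, and only once the pairing is bounded. A primal-scale bound is available only a posteriori, from rapidly localized Lagrange atoms, and that presupposes the density condition you are proving.
\end{enumerate}

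\textbf{The missing idea.} Saying that duality is unavailable in the quasi-Banach range is a misdiagnosis. A pairing with generalized conjugate exponents ($p^*=\infty$ for $p\le1$) exists throughout, and full duality is not needed. The paper proceeds as follows. It uses controlled preimages only for $a\in c_{00}(\Lambda)$. It bounds $\langle f_a,g_b\rangle_\alpha$ by the forward Gaussian pairing (F5) against $\mathcal F^{p^*,q^*}_{\alpha;\sigma(p,q)}$. It then applies the finite-support norming Lemma~\ref{lem:finite-support-norming} to get
\[
\|d\|_{\ell^{p^*,q^*}_{\mathrm{nat};\sigma(p,q)}(\Lambda)}
\lesssim
\Bigl\|\sum_{\lambda\in\Lambda} d_\lambda\kappa_{\alpha,\lambda}\Bigr\|_{\mathcal F^{p^*,q^*}_{\alpha;\sigma(p,q)}},
\qquad d\in c_{00}(\Lambda).
\]
Next it factors $\sum_\lambda d_\lambda\kappa_{\alpha,\lambda}=S^\varphi_\Omega M_{\Omega,\Lambda}d$ through the fixed lattice. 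Weighted localized synthesis at the conjugate exponents (this is why that theorem carries the weight $\sigma$) turns the estimate into lower stability of the localized cross-Gram matrix $M_{\Omega,\Lambda}$ on $\ell^{p^*,q^*}_{w_\nu}$, with $\nu=\frac1p-\frac1q$. That is what Theorem~\ref{thm:localized-lower-stability-transfer} requires.

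\textbf{A discrete alternative close to your first idea.} Your first idea can also be made rigorous without (F5).
\begin{enumerate}
\item Write $f_a=S^\varphi_\Gamma C_\Gamma f_a$. This gives $a=WB\,C_\Gamma f_a$, where $W$ is the diagonal weight $(1+|\lambda|)^{\nu(p,q)}$ and $B$ is the localized matrix with entries $\varphi_\gamma(\lambda)e^{-\frac{\alpha}{2}|\lambda|^2}$. Moreover $\|C_\Gamma f_a\|_{\ell^{p,q}_{\mathrm{nat}}(\Gamma)}\lesssim\|a\|$.
\item Mixed H\"older with generalized conjugates, plus norming, then yields lower stability of $B^*$ on the same space $\ell^{p^*,q^*}_{w_\nu}$.
\item The $\ell^2$-stability of $B^*$ gives the Riesz bound, because the dual atoms form a frame.
\end{enumerate}

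As written, however, the proposal never derives the mixed lower-stability estimate. It also omits necessity for $f_\alpha^{p,\infty}$. That case follows by the same argument, since $c_{00}(\Lambda)\subset\ell_0^{p,\infty}(\Lambda)$.
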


\smallskip
\noindent
The assertion about the right inverse is stronger than bounded
surjectivity in the quasi-Banach range. The quasi-Banach open mapping
theorem gives uniformly controlled preimages, but not a bounded linear
choice of interpolant. Under the geometric condition above, the proof constructs such a
bounded linear right inverse explicitly.

\subsection{Proof architecture}

The proof is organized around two localization principles.

\smallskip
\noindent
The first is a localized synthesis principle. Rapidly localized Hilbert
dual and Lagrange atoms remain bounded synthesis families throughout the
radial--angular mixed-norm scale, with the appropriate annular weights.
Together with the local geometry of the restriction map, this transfers
the classical Hilbert sampling and interpolation constructions to
$\mathcal F_\alpha^{p,q}$ and yields the sufficiency directions,
including the quasi-Banach range and the little endpoint.

\smallskip
\noindent
The second principle is a lower-stability transfer for rapidly localized
matrices. The main difficulty is that the annular mixed sequence norms
are tied to the origin and are not translation invariant. We overcome
this by first transferring weighted mixed lower stability to
$\ell^\infty$ through translated cutoffs and a commutator argument, and
then passing to $\ell^2$ by the $p$-independence theorem for the
Sjöstrand class.

\smallskip
\noindent
For necessity, these two streams are combined through a fixed Hilbert
sampling lattice. On the sampling side, a separated carrier retaining
the lower sampling inequality is transferred to Hilbert sampling. On
the interpolation side, bounded surjectivity, finite-support norming,
and the Gaussian pairing give a weighted lower synthesis estimate, which
is transferred to a Hilbert Riesz-sequence estimate. The classical
Hilbert Fock density theorem then yields the strict lower and upper
density conditions.

\subsection{Organization}

Section~\ref{sec:structural-inputs} collects and develops the analytic tools used throughout the
paper. It introduces the weighted mixed scales, records the required
analytic inputs from the companion paper, recalls the relevant Hilbert
Fock frame machinery, and proves the weighted localized synthesis
theorem.

\smallskip
\noindent
Section~\ref{sec:local-geometry} develops the local geometry of sampling and interpolation.
It proves upper restriction and the corresponding separation
consequences, constructs separated sampling carriers, and extends
Hilbert sampling reconstruction to the full mixed-norm scale.

\smallskip
\noindent
Section~\ref{sec:sufficiency} proves the sufficiency directions of the sampling and
interpolation theorems, including the little endpoint and the
construction of bounded linear right inverses.

\smallskip
\noindent
Section~\ref{sec:localized-stability} develops the localized lower-stability transfer for weighted
annular mixed sequence spaces. This discrete matrix argument is
independent of the Fock-space structure, apart from the annular
sequence-space framework.

\smallskip
\noindent
Section~\ref{sec:mixed-hilbert-transfer} combines the analytic and geometric tools of Sections~\ref{sec:structural-inputs} and \ref{sec:local-geometry}
with the stability-transfer principle of Section~\ref{sec:localized-stability} to obtain the
mixed-to-Hilbert sampling and interpolation transfers.

\smallskip
\noindent
Finally, Section~\ref{sec:necessity} proves the strict sampling and interpolation
necessity statements and thereby completes the proofs of the main
theorems.


\section{Analytic tools and structural inputs}
\label{sec:structural-inputs}

The basic mixed-norm spaces, annular sequence spaces, normalized
restriction map, and density notions were introduced in Section~\ref{S:Introduction}.
In this section we collect and develop the analytic tools used throughout the
paper. After introducing the weighted scales and recalling the
required inputs from the companion paper, we record the Hilbert
frame machinery and establish the weighted localized synthesis
theorem.

\smallskip
\noindent
Throughout, $\mathbb N_0=\{0,1,2,\ldots\}$, $dA$ denotes planar
Lebesgue measure, and $H(\mathbb C)$ denotes the space of entire
functions. For an index set $Z$, let $c_{00}(Z)$ denote the space of
finitely supported complex sequences indexed by $Z$.

\smallskip
\noindent
We write $A\lesssim B$ if $A\le CB$, where the implicit constant
$C>0$ is independent of the variables under consideration.
Dependence on parameters is indicated by subscripts, while dependence
on fixed sets, families, or operators is usually suppressed.
Once specified, the parameter dependence may be omitted in subsequent
estimates within the same argument. We write $A\asymp B$ if
$A\lesssim B$ and $B\lesssim A$.

\subsection{Additional notation and weighted scales}
\label{SS:AdditionalNotation}
In this subsection, $\sigma,\nu\in\mathbb R$ denote generic weight
parameters. For $0<p,q\le\infty$, the distinguished mixed-norm weight
exponent will be denoted by
\[
\nu(p,q):=\frac1q-\frac1p.
\]
For $\sigma\in\mathbb R$, define the polynomially weighted mixed-norm
Fock space $\mathcal F_{\alpha;\sigma}^{p,q}$ by
\[
\|f\|_{\mathcal F_{\alpha;\sigma}^{p,q}}
:=
\begin{cases}
\displaystyle
\left(
\alpha q\int_0^\infty
\bigl[M_p(f,r)(1+r)^\sigma\bigr]^q
e^{-\frac{\alpha q}{2}r^2}r\,dr
\right)^{\frac{1}{q}},
& 0<q<\infty,\\[4mm]
\displaystyle
\sup_{r\ge0}
M_p(f,r)e^{-\frac{\alpha}{2}r^2}(1+r)^\sigma,
& q=\infty.
\end{cases}
\]
Thus
\[
\mathcal F_{\alpha;0}^{p,q}=\mathcal F_\alpha^{p,q}.
\]
For $\nu\in\mathbb R$, set
\[
w_\nu(k):=(1+k)^\nu,\qquad k\in\mathbb N_0.
\]
For a locally finite set $\Lambda\subset\mathbb C$, let
$\ell_{w_\nu}^{p,q}(\Lambda)$ be the space of all sequences
$a=(a_\lambda)_{\lambda\in\Lambda}$ such that
\[
\|a\|_{\ell_{w_\nu}^{p,q}(\Lambda)}
:=
\left\|
\left(
w_\nu(k)\,
\|a|_{\Lambda_k}\|_{\ell^p(\Lambda_k)}
\right)_{k\ge0}
\right\|_{\ell^q}
<\infty.
\]
When $q=\infty$, let $\ell_{w_\nu,0}^{p,\infty}(\Lambda)$ be the
subspace of those $a\in\ell_{w_\nu}^{p,\infty}(\Lambda)$ for which
\[
w_\nu(k)\,
\|a|_{\Lambda_k}\|_{\ell^p(\Lambda_k)}
\longrightarrow0
\qquad (k\to\infty).
\]
For \(\nu = 0\),
\[
\ell_{w_0}^{p,q}(\Lambda)=\ell^{p,q}(\Lambda),
\qquad
\ell_{w_0,0}^{p,\infty}(\Lambda)=\ell_0^{p,\infty}(\Lambda).
\]
For later use, let $0<p,q\le\infty$ and $\sigma\in\mathbb R$, and define
\[
\ell_{\mathrm{nat};\sigma}^{p,q}(\Lambda)
:=
\ell_{w_{\nu(p,q)+\sigma}}^{p,q}(\Lambda).
\]
In particular,
\[
\ell_{\mathrm{nat}}^{p,q}(\Lambda)
:=
\ell_{\mathrm{nat};0}^{p,q}(\Lambda)
=
\ell_{w_{\nu(p,q)}}^{p,q}(\Lambda).
\]
At the outer endpoint, we also write
\[
\ell_{\mathrm{nat};\sigma,0}^{p,\infty}(\Lambda)
:=
\ell_{w_{\nu(p,\infty)+\sigma},0}^{p,\infty}(\Lambda),
\]
and, in particular,
\[
\ell_{\mathrm{nat},0}^{p,\infty}(\Lambda)
:=
\ell_{\mathrm{nat};0,0}^{p,\infty}(\Lambda)
=
\ell_{w_{\nu(p,\infty)},0}^{p,\infty}(\Lambda).
\]
As in \cite[Remark~5.28]{FT26}, the spaces
$\ell_{w_\nu}^{p,q}(\Lambda)$ are complete quasi-Banach spaces and
their quasi-norms satisfy the powered triangle inequality with exponent
$s=\min\{1,p,q\}$. The little spaces
$\ell_{w_\nu,0}^{p,\infty}(\Lambda)$ and $f_\alpha^{p,\infty}$ are
closed in their respective ambient spaces.

\smallskip
\noindent
Since
\[
1+k\le 1+|\lambda|<2(1+k),
\qquad \lambda\in\Lambda_k,
\]
pointwise and shell polynomial weights are interchangeable, up to
constants depending only on the exponent. We shall use this comparison
without further comment.

\smallskip
\noindent
For $0<p\le\infty$, define the generalized conjugate exponent $p^*$ by
\[
p^*
:=
\begin{cases}
\infty, & 0<p\le1,\\[1mm]
\dfrac{p}{p-1}, & 1<p<\infty,\\[2mm]
1, & p=\infty.
\end{cases}
\]
Equivalently,
\( \displaystyle 
\frac1{p^*}
=
\left(1-\frac1p\right)_+.
\)
We define $q^*$ analogously and set
\[
\sigma(p,q)
:=
\left(\frac1p-1\right)_+
-
\left(\frac1q-1\right)_+.
\]
The elementary identity
\begin{equation}
\label{eq:duality-correction}
\nu(p,q)
= - \nu(p^*, q^*)-\sigma(p,q)
\end{equation}
will be used in the interpolation argument.

\smallskip
\noindent
For $w\in\mathbb C$, let
\[
\kappa_{\alpha,w}(z)
:=
e^{\alpha z\overline w-\frac{\alpha}{2}|w|^2}.
\]
Then
\begin{equation*}
\label{eq:normalized-kernel-decay}
|\kappa_{\alpha,w}(z)|
e^{-\frac{\alpha}{2}|z|^2}
=
e^{-\frac{\alpha}{2}|z-w|^2}.
\end{equation*}
Whenever the integral is absolutely convergent, we use the Gaussian
pairing
\begin{equation}
\label{eq:gaussian-pairing}
\langle f,g\rangle_\alpha
:=
\frac{\alpha}{\pi}
\int_{\mathbb C}
f(z)\overline{g(z)}
e^{-\alpha|z|^2}\,dA(z),
\end{equation}
which is linear in the first variable and conjugate-linear in the second.

\subsection{Analytic inputs from the companion paper}
\label{SS:StructuralInputsFT26}

We record the analytic results from the companion paper
\cite{FT26} that will be used below.

\smallskip
\noindent
(F1) \emph{Quasi-Banach structure.}
For $0<p,q\le\infty$, set
\[
s:=\min\{1,p,q\}.
\]
Then $\mathcal F_\alpha^{p,q}$ is complete and
\[
\|f+g\|_{\mathcal F_\alpha^{p,q}}^s
\le
\|f\|_{\mathcal F_\alpha^{p,q}}^s
+
\|g\|_{\mathcal F_\alpha^{p,q}}^s.
\]
See \cite[Lemma~2.13]{FT26}.

\smallskip
\noindent
(F2) \emph{Weighted annular discretization.}
For $\sigma\in\mathbb R$, set
\[
B_{p,\sigma}(f;k)
:=
\sup_{r\in[k,k+1)}
M_p(f,r)e^{-\frac{\alpha}{2}r^2}(1+r)^\sigma .
\]
For convenience, we write \(B_p(f;k):=B_{p,0}(f;k)\).
Then, for $0<q<\infty$,
\[
\|f\|_{\mathcal F_{\alpha;\sigma}^{p,q}}^q
\asymp_{\alpha,\sigma,p,q}
\sum_{k=0}^\infty
(1+k)B_{p,\sigma}(f;k)^q,
\]
whereas
\[
\|f\|_{\mathcal F_{\alpha;\sigma}^{p,\infty}}
=
\sup_{k\ge0}B_{p,\sigma}(f;k).
\]
See \cite[Proposition~2.6 and Lemma~7.1]{FT26}.

\smallskip
\noindent
(F3) \emph{Normalized-kernel size.}
For every $w\in\mathbb C$,
\[
\|\kappa_{\alpha,w}\|_{\mathcal F_\alpha^{p,q}}
\asymp_{\alpha,p,q}
(1+|w|)^{\nu(p,q)}.
\]
See \cite[Proposition~2.11]{FT26}.

\smallskip
\noindent
(F4) \emph{Dilation approximation.}
For $0<\tau<1$, let
\[
D_\tau f(z)=f(\tau z).
\]
If $0<q<\infty$, then
\[
\|D_\tau f-f\|_{\mathcal F_\alpha^{p,q}}
\longrightarrow0
\quad \text{as} \quad \tau\uparrow1.
\]
See \cite[Lemma~2.14]{FT26}.

\smallskip
\noindent
(F5) \emph{Forward Gaussian pairing.}
Let $0<p,q\le\infty$. Then the Gaussian pairing is absolutely
convergent and
\[
|\langle f,g\rangle_\alpha|
\lesssim_{\alpha,p,q}
\|f\|_{\mathcal F_\alpha^{p,q}}
\,
\|g\|_{\mathcal F_{\alpha;\sigma(p,q)}^{p^*,q^*}}
\]
for every
\(
f\in\mathcal F_\alpha^{p,q}\) and \(
g\in\mathcal F_{\alpha;\sigma(p,q)}^{p^*,q^*}\).
The case $0<q<\infty$ was proved in
\cite[Proposition~7.4]{FT26}. The case $q=\infty$ follows from the
argument in \cite[Proposition~7.15, Step~1]{FT26}, which uses only the
$\mathcal F_\alpha^{p,\infty}$ norm and hence applies to the full space
$\mathcal F_\alpha^{p,\infty}$.

\subsection{Hilbert Fock frames and localized atoms}
\label{SS:HilbertFock}

For $p=q=2$, the Gaussian pairing \eqref{eq:gaussian-pairing} is the
inner product of $\mathcal F_\alpha^2$.  If $Z\subset\mathbb C$ is
separated, define
\[
C_Z:\mathcal F_\alpha^2\longrightarrow \ell^2(Z),
\qquad
(C_Zf)_z
:=
\langle f,\kappa_{\alpha,z}\rangle_\alpha
=
f(z)e^{-\frac{\alpha}{2}|z|^2},
\]
and
\[
C_Z^*c
=
\sum_{z\in Z}c_z\kappa_{\alpha,z}.
\]
We write
\[
S_Z:=C_Z^*C_Z,
\qquad
G_Z:=C_ZC_Z^*
\]
for the corresponding frame operator and Gramian.

\smallskip
\noindent
The Gramian satisfies
\begin{equation}
\label{eq:hilbert-gramian-decay}
\left|
(G_Z)_{z,w}
\right|
=
\left|
\langle\kappa_{\alpha,w},\kappa_{\alpha,z}\rangle_\alpha
\right|
=
e^{-\frac{\alpha}{2}|z-w|^2}.
\end{equation}
Since $Z$ is separated, the normalized kernels
$(\kappa_{\alpha,z})_{z\in Z}$ form a Bessel sequence in
$\mathcal F_\alpha^2$; see, for example,
\cite[Lemma~4.9]{Zhu2012}. Hence
\[
C_Z:\mathcal F_\alpha^2\to\ell^2(Z),
\qquad
C_Z^*:\ell^2(Z)\to\mathcal F_\alpha^2
\]
are bounded.

\smallskip
\noindent
In the Hilbert setting, the sampling inequality is exactly the frame
inequality for the normalized kernels. Likewise, if $Z$ is separated, then $C_Z$ is bounded, and
interpolation is equivalent to surjectivity of $C_Z$. Since
$C_Z$ is onto if and only if $C_Z^*$ is bounded below, this is
equivalent to $(\kappa_{\alpha,z})_{z\in Z}$ being a Riesz
sequence.

\begin{theorem}[Classical Hilbert Fock density theorem]
\label{thm:classical-hilbert-density}
Let $\alpha>0$.
\begin{enumerate}
\item[\textnormal{(i)}]
A separated set $\Gamma\subset\mathbb C$ is sampling for
$\mathcal F_\alpha^2$ if and only if
\[
D^-(\Gamma)>\frac{\alpha}{\pi}.
\]

\item[\textnormal{(ii)}]
A locally finite set $\Lambda\subset\mathbb C$ is interpolating for
$\mathcal F_\alpha^2$ if and only if it is separated and
\[
D^+(\Lambda)<\frac{\alpha}{\pi}.
\]
\end{enumerate}
\end{theorem}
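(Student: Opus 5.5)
This is the classical Seip--Wallstén density theorem (with the Gaussian parameter $\alpha$ in place of the standard normalization), and the plan is to reduce it to the standard literature rather than reprove it. First rescale to the standard case. The dilation $f\mapsto f(z/\sqrt{\alpha/\pi})$, or equivalently $z\mapsto\sqrt{\alpha/\pi}\,z$, is a unitary equivalence between $\mathcal F_\alpha^2$ and the Fock space with parameter $\pi$. It maps normalized kernels to normalized kernels, it preserves separation, and it multiplies Beurling densities by the factor $\pi/\alpha$. Under this equivalence, statement (i) becomes the assertion that a separated set is sampling for $\mathcal F_\pi^2$ iff $D^->1$, and (ii) becomes the interpolation assertion with $D^+<1$. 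These are exactly the theorems of Seip and Seip--Wallstén \cite{Seip1992,SeipWallsten1992}; see also Lyubarskii \cite{Lyubarskii1992} and \cite[Chapter~4]{Zhu2012}, where the statements are given directly for general $\alpha$ with critical value $\alpha/\pi$.

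For completeness I will indicate the structure of the cited proofs.

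\textbf{Sufficiency.} One compares $\Gamma$ with a perturbed square lattice. For interpolation, one constructs a Weierstrass-type $\sigma$-function $g$ vanishing on $\Lambda$ with $|g(z)|e^{-\frac{\alpha}{2}|z|^2}\asymp\operatorname{dist}(z,\Lambda)$, modified by a subharmonic correction that exploits the density gap. Lagrange interpolation with $g(z)/(g'(\lambda)(z-\lambda))$, combined with an $L^2$ estimate, then gives the interpolant. For sampling, the density surplus lets one find such a $g$ with growth strictly above the Gaussian. This yields a reproducing formula and hence the lower frame bound. The upper bound is the Bessel property, which follows from separation.

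\textbf{Necessity.} The strictness of the inequalities comes from a Beurling-type weak-limit argument. Sampling and interpolating properties persist under translates and weak limits, with uniform constants, thanks to the invariance of $\mathcal F_\alpha^2$ under the Weyl translations $f\mapsto f(\cdot-w)\kappa_{\alpha,w}$. One also shows that they are stable under small dilations $\Lambda\mapsto(1\pm\varepsilon)\Lambda$. A comparison with the critical lattice, which is neither sampling nor interpolating, or alternatively Landau's eigenvalue counting for concentration operators on disks, gives the non-strict inequalities. The stability under perturbation then upgrades them to strict ones.

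The main obstacle is the necessity of \emph{strict} inequality. It requires the stability of sampling and interpolation under dilation, together with the fact that the critical lattice fails both properties, which is proved via the Weierstrass $\sigma$-function. Since the paper only uses this theorem as a black box, the proof here consists of the rescaling remark and the citation.
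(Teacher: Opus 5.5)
Your proposal is correct and takes the same route as the paper: the paper gives no proof of this theorem and simply cites Seip and Seip--Wallst\'en in the formulation of \cite[Corollaries~4.37 and~4.51]{Zhu2012}, where the threshold is already stated as $\frac{\alpha}{\pi}$, so your (correct) unitary rescaling to $\alpha=\pi$ is harmless but unnecessary. Your outline of the classical proofs is not used by the argument and is only heuristic in places; it does not affect the proof, which consists of the citation.
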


\noindent
These are the classical density theorems of Seip and Seip--Wallst\'en;
see \cite[Corollaries~4.37 and~4.51]{Zhu2012} for the above formulation,
and \cite{Seip1992,SeipWallsten1992} for the original results.

\medskip
\noindent
We next record the localization consequence needed below.
For a separated set $Z\subset\mathbb C$ and $L>0$, let
$\mathcal A_L(Z)$ denote the polynomially weighted Schur class of
matrices $A=(A_{z,w})_{z,w\in Z}$ satisfying
\[
\sup_{z\in Z}
\sum_{w\in Z}
|A_{z,w}|(1+|z-w|)^L
+
\sup_{w\in Z}
\sum_{z\in Z}
|A_{z,w}|(1+|z-w|)^L
<\infty.
\]
By \eqref{eq:hilbert-gramian-decay},
\[
G_Z\in\mathcal A_L(Z)
\qquad (L>0).
\]
Since $Z$ is separated, $(Z,|\cdot|)$ with counting measure has
polynomial growth. For every $L>0$, the polynomial weight
$(1+|z-w|)^L$ satisfies the admissibility assumptions of
\cite[Theorem~4.1]{Sun2007}; see \cite[Example~A.2]{Sun2007}.
Hence, as $G_Z\in\mathcal A_L(Z)$, inverse-closedness gives
\begin{equation}\label{eq:localized-gramian-inverse}
G_Z^{-1}\in\mathcal A_L(Z)
\end{equation}
whenever $G_Z$ is invertible on $\ell^2(Z)$.

\smallskip
\noindent
If $\Gamma$ is sampling for $\mathcal F_\alpha^2$, let
$A_\Gamma,B_\Gamma$ be frame bounds. Then
\[
A_\Gamma I\le S_\Gamma\le B_\Gamma I,
\]
and the nonzero spectra of $S_\Gamma=C_\Gamma^*C_\Gamma$ and
$G_\Gamma=C_\Gamma C_\Gamma^*$ coincide. Hence
\[
\operatorname{spec}(G_\Gamma)
\subset\{0\}\cup[A_\Gamma,B_\Gamma],
\]
so $0$ is isolated in the spectrum. The Moore--Penrose inverse is
therefore given by
\[
G_\Gamma^\dagger
=
\frac{1}{2\pi i}
\int_{\mathcal C}
\zeta^{-1}(\zeta I-G_\Gamma)^{-1}\,d\zeta,
\]
where $\mathcal C$ surrounds the nonzero spectrum and excludes $0$.
For $\zeta\in\mathcal C$, inverse-closedness gives
\[
(\zeta I-G_\Gamma)^{-1}\in\mathcal A_L(\Gamma),
\]
and hence
\begin{equation}\label{eq:localized-gramian-pseudoinverse}
G_\Gamma^\dagger\in\mathcal A_L(\Gamma),\qquad L>0.
\end{equation}
This is the same localized range-inverse argument as in
\cite[Proposition~4.10]{FT26}.

\begin{proposition}
\label{prop:localized-hilbert-atoms}
The following assertions hold.

\begin{enumerate}
\item[\textnormal{(i)}]
If $\Gamma\subset\mathbb C$ is separated and sampling for
$\mathcal F_\alpha^2$, then its canonical dual atoms
\[
\varphi_\gamma
:=
S_\Gamma^{-1}\kappa_{\alpha,\gamma},
\qquad \gamma\in\Gamma,
\]
satisfy, for every $N>0$,
\begin{equation}
\label{eq:dual-atom-localization}
|\varphi_\gamma(z)|
e^{-\frac{\alpha}{2}|z|^2}
\lesssim_{\alpha,N}
(1+|z-\gamma|)^{-N},
\qquad z\in\mathbb C.
\end{equation}
Moreover,
\begin{equation}
\label{eq:hilbert-frame-reconstruction}
f
=
\sum_{\gamma\in\Gamma}
\langle f,\kappa_{\alpha,\gamma}\rangle_\alpha
\varphi_\gamma,
\qquad
f\in\mathcal F_\alpha^2,
\end{equation}
with convergence in $\mathcal F_\alpha^2$.

\item[\textnormal{(ii)}]
If $\Lambda\subset\mathbb C$ is separated and interpolating for
$\mathcal F_\alpha^2$, then there exist
$\psi_\lambda\in\mathcal F_\alpha^2$, $\lambda\in\Lambda$, such that
\begin{equation}
\label{eq:lagrange-property}
\psi_\lambda(\mu)
e^{-\frac{\alpha}{2}|\mu|^2}
=
\delta_{\lambda,\mu},
\qquad
\lambda,\mu\in\Lambda,
\end{equation}
and, for every $N>0$,
\begin{equation}
\label{eq:lagrange-atom-localization}
|\psi_\lambda(z)|
e^{-\frac{\alpha}{2}|z|^2}
\lesssim_{\alpha,N}
(1+|z-\lambda|)^{-N},
\qquad z\in\mathbb C.
\end{equation}
\end{enumerate}
\end{proposition}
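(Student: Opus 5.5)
For (i), the plan is to write the dual atoms as kernel expansions with coefficients from the localized pseudoinverse of the Gramian. Since $S_\Gamma$ is invertible on $\mathcal F_\alpha^2$ and $S_\Gamma=C_\Gamma^*C_\Gamma$, we have $C_\Gamma^*G_\Gamma^\dagger=S_\Gamma^{-1}C_\Gamma^*$. To check this, note that $C_\Gamma^*G_\Gamma^\dagger C_\Gamma=C_\Gamma^*(C_\Gamma C_\Gamma^*)^\dagger C_\Gamma$ is the orthogonal projection onto $\overline{\operatorname{ran}C_\Gamma^*}=\mathcal F_\alpha^2$. Hence $S_\Gamma^{-1}=C_\Gamma^*(G_\Gamma^\dagger)^2C_\Gamma$, and therefore $S_\Gamma^{-1}C_\Gamma^*=C_\Gamma^*(G_\Gamma^\dagger)^2G_\Gamma=C_\Gamma^*G_\Gamma^\dagger$. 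Applying this to $e_\gamma$ gives
\[
\varphi_\gamma=\sum_{\gamma'\in\Gamma}(G_\Gamma^\dagger)_{\gamma',\gamma}\,\kappa_{\alpha,\gamma'} .
\]
By \eqref{eq:localized-gramian-pseudoinverse}, $G_\Gamma^\dagger\in\mathcal A_L(\Gamma)$ for every $L$. In particular $|(G_\Gamma^\dagger)_{\gamma',\gamma}|\lesssim_L(1+|\gamma'-\gamma|)^{-L}$, because each entry is bounded by a row sum.

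The decay estimate \eqref{eq:dual-atom-localization} then reduces to a discrete convolution bound. Using $|\kappa_{\alpha,\gamma'}(z)|e^{-\frac\alpha2|z|^2}=e^{-\frac\alpha2|z-\gamma'|^2}$, we get
\[
|\varphi_\gamma(z)|e^{-\frac\alpha2|z|^2}\le\sum_{\gamma'}(1+|\gamma'-\gamma|)^{-L}e^{-\frac\alpha2|z-\gamma'|^2}.
\]
Insert $(1+|z-\gamma|)^N\le(1+|z-\gamma'|)^N(1+|\gamma'-\gamma|)^N$ and take $L=N+3$. The remaining sum $\sum_{\gamma'}(1+|\gamma'-\gamma|)^{-3}$ is uniformly bounded because $\Gamma$ is separated, which is the only place separation enters. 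The series for $\varphi_\gamma$ converges in $\mathcal F_\alpha^2$ because $C_\Gamma^*$ is bounded and the $\gamma$-th column of $G_\Gamma^\dagger$ is in $\ell^2$. That convergence also justifies the pointwise evaluation. Finally, the reconstruction formula \eqref{eq:hilbert-frame-reconstruction} is just $f=S_\Gamma^{-1}S_\Gamma f=\sum_\gamma\langle f,\kappa_{\alpha,\gamma}\rangle_\alpha S_\Gamma^{-1}\kappa_{\alpha,\gamma}$. This uses unconditional $\ell^2$-convergence of $C_\Gamma f$ and boundedness of $S_\Gamma^{-1}C_\Gamma^*$.

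For (ii), the plan is to use the kernels themselves as atoms after inverting the Gramian. Interpolation means $C_\Lambda$ is onto, so $(\kappa_{\alpha,\lambda})$ is a Riesz sequence. Then $G_\Lambda=C_\Lambda C_\Lambda^*$ is bounded below on $\ell^2(\Lambda)$ and self-adjoint, hence invertible. By \eqref{eq:localized-gramian-inverse}, $G_\Lambda^{-1}\in\mathcal A_L(\Lambda)$ for every $L$. Set
\[
\psi_\lambda:=C_\Lambda^*G_\Lambda^{-1}e_\lambda=\sum_{\mu}(G_\Lambda^{-1})_{\mu,\lambda}\kappa_{\alpha,\mu}.
\]
Then $C_\Lambda\psi_\lambda=G_\Lambda G_\Lambda^{-1}e_\lambda=e_\lambda$, which is exactly \eqref{eq:lagrange-property}, since $(C_\Lambda\psi)_\mu=\psi(\mu)e^{-\frac\alpha2|\mu|^2}$. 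The decay \eqref{eq:lagrange-atom-localization} follows from the same convolution estimate as in (i).

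The only delicate point I expect is the algebraic identification $S_\Gamma^{-1}C_\Gamma^*=C_\Gamma^*G_\Gamma^\dagger$ in (i). The rest is the routine summation over a separated set.
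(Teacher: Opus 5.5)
Your proposal is correct and follows the paper's proof. In both, $\varphi_\gamma$ and $\psi_\lambda$ are expanded in normalized kernels with coefficients from the columns of $G_\Gamma^\dagger$ and $G_\Lambda^{-1}$, the $\mathcal A_L$-localization from inverse-closedness gives the coefficient decay, and the splitting $1+|z-\eta|\le(1+|z-\mu|)(1+|\mu-\eta|)$ over a separated set gives the pointwise decay. The differences are cosmetic. The paper verifies $S_\Gamma^{-1}C_\Gamma^*=C_\Gamma^*G_\Gamma^\dagger$ directly on $\ker C_\Gamma^*$ and on $\operatorname{Ran}C_\Gamma$ via $G_\Gamma C_\Gamma=C_\Gamma S_\Gamma$, rather than through your formula $S_\Gamma^{-1}=C_\Gamma^*(G_\Gamma^\dagger)^2C_\Gamma$. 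In the last step it sums the Gaussian factor over the separated set, whereas you sum the polynomial tail $(1+|\gamma'-\gamma|)^{-3}$.
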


\begin{proof}
For the sampling case, the canonical dual synthesis satisfies
\[
S_\Gamma^{-1}C_\Gamma^*
=
C_\Gamma^*G_\Gamma^\dagger.
\]
Indeed, both sides vanish on
\(
(\operatorname{Ran}C_\Gamma)^\perp=\ker C_\Gamma^*,
\)
while on $\operatorname{Ran}C_\Gamma$ the identity follows from
\(
G_\Gamma C_\Gamma=C_\Gamma S_\Gamma.
\)
Consequently,
\begin{equation}
\label{eq:dual-atom-expansion}
\varphi_\gamma
=
\sum_{\mu\in\Gamma}
(G_\Gamma^\dagger)_{\mu,\gamma}
\kappa_{\alpha,\mu}.
\end{equation}
The reconstruction formula
\eqref{eq:hilbert-frame-reconstruction} is the canonical frame
expansion.

\smallskip
\noindent
If $\Lambda$ is interpolating, the normalized kernels form a Riesz
sequence, so $G_\Lambda$ is invertible on $\ell^2(\Lambda)$. Define
\begin{equation}
\label{eq:lagrange-atom-expansion}
\psi_\lambda
:=
\sum_{\mu\in\Lambda}
(G_\Lambda^{-1})_{\mu,\lambda}
\kappa_{\alpha,\mu}.
\end{equation}
Then
\[
C_\Lambda\psi_\lambda
=
G_\Lambda G_\Lambda^{-1}e_\lambda
=
e_\lambda,
\]
which gives \eqref{eq:lagrange-property}. The series in
\eqref{eq:dual-atom-expansion} and
\eqref{eq:lagrange-atom-expansion} converge in $\mathcal F_\alpha^2$,
since the corresponding coefficient columns belong to $\ell^2$ and
the kernel synthesis operators are bounded.

\smallskip
\noindent
It remains to prove localization. In either case, let $Z$ denote the
corresponding index set, and write the expansion in the form
\[
h_\eta
=
\sum_{\mu\in Z}
a_{\mu,\eta}\kappa_{\alpha,\mu},
\qquad \eta\in Z,
\]
where $(Z,h_\eta)$ stands for either
$(\Gamma,\varphi_\gamma)$ or $(\Lambda,\psi_\lambda)$.
By \eqref{eq:localized-gramian-inverse} and
\eqref{eq:localized-gramian-pseudoinverse}, for every $L>0$,
\[
|a_{\mu,\eta}|
\lesssim_{\alpha,L}
(1+|\mu-\eta|)^{-L}.
\]
Hence
\[
|h_\eta(z)|e^{-\frac{\alpha}{2}|z|^2}
\lesssim
\sum_{\mu\in Z}
(1+|\mu-\eta|)^{-L}
e^{-\frac{\alpha}{2}|z-\mu|^2}.
\]
Fix $N>0$ and choose $L\ge N$. Since
\[
1+|z-\eta|
\le
(1+|z-\mu|)(1+|\mu-\eta|),
\]
we obtain
\[
\begin{aligned}
(1+|z-\eta|)^N
|h_\eta(z)|e^{-\frac{\alpha}{2}|z|^2}
& \, \lesssim
\sum_{\mu\in Z}
(1+|\mu-\eta|)^{N-L}
(1+|z-\mu|)^N
e^{-\frac{\alpha}{2}|z-\mu|^2}
\\
&\, \lesssim
\sum_{\mu\in Z}
(1+|z-\mu|)^N
e^{-\frac{\alpha}{2}|z-\mu|^2}.
\end{aligned}
\]
The last sum is uniformly bounded in $z$ by separated-set counting.
This proves \eqref{eq:dual-atom-localization} and
\eqref{eq:lagrange-atom-localization}. The same majorant also gives
absolute locally uniform convergence of the expansions.
\end{proof}

\medskip
\noindent
For a Hilbert sampling set $\Gamma$, we write
\[
S_\Gamma^\varphi c
:=
\sum_{\gamma\in\Gamma}c_\gamma\varphi_\gamma
\]
for synthesis by the canonical dual atoms.  Thus
\[
S_\Gamma^\varphi C_\Gamma=I
\qquad\text{on }\mathcal F_\alpha^2.
\]

\subsection{Weighted localized synthesis}
\label{subsec:localized-synthesis}

We now establish the localized synthesis tool used in both the
sufficiency and necessity arguments. We begin with the one-shell
estimate underlying the weighted synthesis theorem.

\begin{definition}[Rapid Fock localization]
\label{def:rapid-fock-localization}
Let $\Sigma\subset\mathbb C$ be separated. A family
$(\phi_\xi)_{\xi\in\Sigma}\subset H(\mathbb C)$ is called \textit{rapidly Fock
localized} if, for every $N>0$, there is a constant $C_N>0$ such that
\begin{equation}
\label{eq:rapid-fock-localization}
|\phi_\xi(z)|e^{-\frac{\alpha}{2}|z|^2}
\le
C_N(1+|z-\xi|)^{-N},
\qquad
z\in\mathbb C,\ \xi\in\Sigma.
\end{equation}
\end{definition}

\smallskip
\noindent
For $m\in\mathbb N_0$, write
\[
\Sigma_m:=\Sigma\cap A_m.
\]
Let $d_\Sigma>0$ be a separation constant for $\Sigma$. Since the disks $B(\xi,d_\Sigma/2)$, $\xi\in\Sigma$, are pairwise
disjoint, a standard packing argument in unit-width annuli gives,
with constants depending only on the separation data of $\Sigma$,
\begin{equation}
\label{eq:separated-shell-counting}
\#\Sigma_m\lesssim1+m,
\qquad
m\in\mathbb N_0.
\end{equation}
The same argument, applied to annuli centered at an arbitrary
$z\in\mathbb C$, yields
\[
\#\{\xi\in\Sigma:j\le |z-\xi|<j+1\}
\lesssim1+j
\]
uniformly in $z$. Consequently, for every $N>2$,
\begin{equation}
\label{eq:separated-decay-sum}
\sup_{z\in\mathbb C}
\sum_{\xi\in\Sigma}(1+|z-\xi|)^{-N}
<\infty.
\end{equation}

\begin{lemma}
\label{lem:one-shell-synthesis}
Let $0<p\le\infty$, let $\Sigma\subset\mathbb C$ be separated, and
let $(\phi_\xi)_{\xi\in\Sigma}$ be rapidly Fock localized. For
$m\in\mathbb N_0$, set
\[
F_m(z)
:=
\sum_{\xi\in\Sigma_m}c_\xi\phi_\xi(z),
\qquad
c_m
:=
\|c|_{\Sigma_m}\|_{\ell^p(\Sigma_m)}.
\]
Then, for every $L>0$,
\begin{equation}
\label{eq:one-shell-synthesis}
M_p(F_m,r)e^{-\frac{\alpha}{2}r^2}
\lesssim_{\alpha,p,L}
c_m(1+m)^{-\frac1p}(1+|r-m|)^{-L},
\qquad
r\ge0.
\end{equation}
\end{lemma}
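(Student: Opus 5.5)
The plan is to start from a pointwise bound and then average over circles. By \eqref{eq:rapid-fock-localization}, for every $N>0$,
\[
|F_m(z)|e^{-\frac{\alpha}{2}|z|^2}
\le C_N\sum_{\xi\in\Sigma_m}|c_\xi|\,K_N(z,\xi),
\qquad K_N(z,\xi):=(1+|z-\xi|)^{-N}.
\]
Two elementary facts about $K_N$ will carry the argument.

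First, $K_N$ splits off the radial decay. If $|z|=r$ and $\xi\in\Sigma_m$, then $|z-\xi|\ge\bigl|r-|\xi|\bigr|\ge|r-m|-1$. Hence $1+|r-m|\le 2(1+|z-\xi|)$, and so
\[
K_{N+M}(z,\xi)\lesssim(1+|r-m|)^{-M}K_N(z,\xi)
\]
for every $M\ge0$.

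Second, $K_N$ has a good angular mean. For $K>1$, the arc-length integral of $K_K(\cdot,\xi)$ over the circle $|z|=r$ is bounded uniformly in $r$ and $\xi$. Since $d\theta=ds/r$ and the integrand is at most $1$, this gives
\[
\frac1{2\pi}\int_0^{2\pi}K_K(re^{i\theta},\xi)\,d\theta\lesssim\min\{1,r^{-1}\}\asymp(1+r)^{-1}.
\]
To turn $(1+r)^{-1}$ into $(1+m)^{-1}$, split into two cases. If $r\ge m/2$, then $(1+r)^{-1}\lesssim(1+m)^{-1}$ directly. If $r<m/2$, then $|r-m|\ge m/2$, so one extra factor $(1+|r-m|)^{-1}$, taken from the first fact, gives $(1+m)^{-1}$. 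Either way,
\[
\frac1{2\pi}\int_0^{2\pi}K_{K+1}(re^{i\theta},\xi)\,d\theta\lesssim(1+m)^{-1}.
\]

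Next, I raise the pointwise bound to the power $p$, splitting by the range of $p$.

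\emph{Case $0<p\le1$.} By $p$-subadditivity,
\[
|F_m(z)|^pe^{-\frac{\alpha p}{2}|z|^2}\lesssim\sum_{\xi\in\Sigma_m}|c_\xi|^pK_{Np}(z,\xi).
\]
Take $Np=Lp+2$. The first fact extracts $(1+|r-m|)^{-Lp}$ and leaves $K_2$. The angular-mean estimate (with $K=1$, plus one extra radial factor) then bounds each angular mean by $(1+m)^{-1}$. Summing over $\xi$ gives $c_m^p(1+m)^{-1}(1+|r-m|)^{-Lp}$, and taking $p$-th roots yields \eqref{eq:one-shell-synthesis}.

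\emph{Case $1<p<\infty$.} Apply Hölder with the weights $K_N$:
\[
\Bigl(\sum_\xi|c_\xi|K_N\Bigr)^p\le\Bigl(\sum_\xi|c_\xi|^pK_N\Bigr)\Bigl(\sum_\xi K_N\Bigr)^{p-1}.
\]
The second factor is bounded uniformly in $z$ by \eqref{eq:separated-decay-sum}, provided $N>2$. The first factor is handled exactly as in the case $p\le1$, with $N=Lp+2$.

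\emph{Case $p=\infty$.} The bound is immediate:
\[
\sum_\xi|c_\xi|K_N\le c_m\sum_\xi K_N\lesssim c_m(1+|r-m|)^{-L}.
\]
Here I use the first fact to extract $(1+|r-m|)^{-L}$ and then \eqref{eq:separated-decay-sum} on the remaining $K_{N-L}$ with $N-L>2$. This case needs no factor in $m$, since $(1+m)^{-1/p}=1$.

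The only delicate step is producing the factor $(1+m)^{-1/p}$. It requires the uniform arc-length bound on circles of arbitrary radius, together with the case split $r\ge m/2$ versus $r<m/2$. The split matters near the origin, where the circle is short and the angular mean is only $O(1)$ rather than $O(1/r)$. All constants depend only on $\alpha$, $p$, $L$, the constants $C_N$ and the separation of $\Sigma$.
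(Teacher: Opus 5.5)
Your proof is correct, but it takes a different route from the paper in two places.

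\textbf{Angular decay.} The paper uses the identity $|re^{i\theta}-\rho e^{i\theta_0}|^2=(r-\rho)^2+4r\rho\sin^2\frac{\theta-\theta_0}{2}$. It splits into the cases $r\asymp\rho$ (via the substitution $u=\rho t$) and $r\not\asymp\rho$, and gets the joint bound $(1+\rho)^{-1}(1+|r-\rho|)^{-M+1}$ in one step. You separate the two effects instead. The radial decay comes off through $1+|r-m|\le 2(1+|z-\xi|)$. The factor $(1+m)^{-1}$ then comes from a uniform arc-length bound plus the split $r\ge m/2$ versus $r<m/2$.

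\textbf{The range $1<p<\infty$.} The paper applies Riesz--Thorin to $U_{m,r}$, interpolating between its $\ell^1\to L^1$ and $\ell^\infty\to L^\infty$ bounds. You use the pointwise weighted H\"older inequality $\bigl(\sum|c_\xi|K_N\bigr)^p\le\bigl(\sum|c_\xi|^pK_N\bigr)\bigl(\sum K_N\bigr)^{p-1}$. This is essentially the proof of the Schur test. It reduces $p>1$ to exactly the computation for $p\le1$, so your argument is more elementary and uniform over $0<p<\infty$. The paper's version is shorter once its two endpoint bounds are in place.

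Two small points need tidying.

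\begin{itemize}
\item The uniform arc-length bound needs a line of justification. The set $\{|z|=r\}\cap B(\xi,t)$ is a single arc of length at most $2\pi t$. When $r>t$ it cannot contain two antipodal points, so it is a minor arc with chord less than $2t$. A layer-cake integration then gives $\int_{|z|=r}(1+|z-\xi|)^{-K}\,|dz|\le 2\pi K/(K-1)$ for $K>1$.
\item Your displayed bound for $K_{K+1}$ is derived under $K>1$, but you apply it with $K=1$, i.e.\ to $K_2$. This is harmless. For $r\ge m/2$ the arc-length bound applies directly to the exponent $2>1$. For $r<m/2$ you only need $K_1\le 1$ together with the extra radial factor. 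Alternatively, just take $Np=Lp+3$.
\end{itemize}
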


\begin{proof}
We first record the angular estimates underlying the argument. If
$0<p<\infty$ and $N>L+\frac{1}{p}$, then
\begin{equation}
\label{eq:angular-shell-decay}
\left(
\frac1{2\pi}
\int_0^{2\pi}
(1+|re^{i\theta}-\xi|)^{-Np}\,d\theta
\right)^{\frac1p}
\lesssim_{p, N}
(1+m)^{-\frac1p}(1+|r-m|)^{-L}
\end{equation}
for $\xi\in\Sigma_m$. Indeed, if $\xi=0$ the estimate is immediate.
Otherwise, write
\[
\xi=\rho e^{i\theta_0},
\qquad
m\le\rho<m+1.
\]
We first claim that, for $M>1$, the identity
\[
|re^{i\theta}-\rho e^{i\theta_0}|^2
=
(r-\rho)^2
+
4r\rho\sin^2\frac{\theta-\theta_0}{2}
\]
gives
\begin{equation}
\label{eq:angular-integral-estimate}
\frac1{2\pi}
\int_0^{2\pi}
\Bigl(1+|re^{i\theta}-\rho e^{i\theta_0}|\Bigr)^{-M}\,d\theta
\lesssim_M
(1+\rho)^{-1}(1+|r-\rho|)^{-M+1}.
\end{equation}
Indeed, by a translation of the angular variable and periodicity
we may replace $\theta-\theta_0$ by $t\in[-\pi,\pi]$. If
$r\asymp\rho$ and $\rho\ge1$, then, by symmetry and
$\sin(\frac{t}{2})\gtrsim t$ for $0\le t\le\pi$,
\[
|re^{it}-\rho|
\gtrsim
\Bigl(|r-\rho|^2+\rho^2t^2\Bigr)^{\frac12}.
\]
After the change of variables $u=\rho t$, the angular integral is
bounded by
\[
\frac1{1+\rho}
\int_0^\infty
\left(1+\sqrt{|r-\rho|^2+u^2}\right)^{-M}\,du
\lesssim_M
(1+\rho)^{-1}(1+|r-\rho|)^{-M+1},
\]
where the last estimate follows by splitting the integral at
$u=1+|r-\rho|$. If $r\asymp\rho$ and $\rho<1$, the same bound is
immediate, since \(r,\rho\), and \(|r-\rho|\) are then uniformly bounded. If $r$ and $\rho$ are not comparable, then
$|r-\rho|\asymp r+\rho$, and
\(
|re^{i\theta}-\rho e^{i\theta_0}|\ge |r-\rho|
\)
gives the estimate directly. 
Applying
\eqref{eq:angular-integral-estimate} with $M=Np$ and using
\(1+\rho\asymp1+m\) and \(1+|r-\rho|\asymp1+|r-m|
\)
proves \eqref{eq:angular-shell-decay}.

\smallskip
\noindent
We shall also use the shell-sum estimate
\begin{equation}
\label{eq:angular-shell-sum}
\sup_{0\le\theta<2\pi}
\sum_{\xi\in\Sigma_m}
(1+|re^{i\theta}-\xi|)^{-N}
\lesssim
(1+|r-m|)^{-L},
\end{equation}
valid whenever $N>L+2$. Indeed, for $z=re^{i\theta}$ and $\xi\in\Sigma_m$, we have
\[
|z-\xi|
\ge
\operatorname{dist}(r,[m,m+1]).
\]
Hence
\[
\begin{aligned}
\sum_{\xi\in\Sigma_m}(1+|z-\xi|)^{-N}
&\le
\bigl(1+\operatorname{dist}(r,[m,m+1])\bigr)^{-L}
\sum_{\xi\in\Sigma}(1+|z-\xi|)^{-N+L}\\
&\lesssim
(1+|r-m|)^{-L}.
\end{aligned}
\]
Here \eqref{eq:separated-decay-sum} applies because $N-L>2$, and we
used
\[
1+\operatorname{dist}(r,[m,m+1])
\asymp
1+|r-m|.
\]
This proves \eqref{eq:angular-shell-sum}.

\smallskip
\noindent
Suppose first that $0<p<1$. Choose the localization order $N$
sufficiently large. By $p$-subadditivity,
\eqref{eq:rapid-fock-localization}, and
\eqref{eq:angular-shell-decay},
\begin{align*}
M_p(F_m,r)^p e^{-\frac{p\alpha}{2}r^2}
&\le
\sum_{\xi\in\Sigma_m}
|c_\xi|^p
\frac1{2\pi}
\int_0^{2\pi}
|\phi_\xi(re^{i\theta})|^p
e^{-\frac{p\alpha}{2}r^2}\,d\theta\\
&\lesssim
c_m^p(1+m)^{-1}(1+|r-m|)^{-Lp}.
\end{align*}
Taking the $p$-th root proves \eqref{eq:one-shell-synthesis}.

\smallskip
\noindent
Suppose now that $1\le p\le\infty$. For fixed $m$ and $r$, define
\[
U_{m,r}a(\theta)
:=
\sum_{\xi\in\Sigma_m}
a_\xi\phi_\xi(re^{i\theta})
e^{-\frac{\alpha}{2}r^2},
\]
where the angular measure is normalized. Choosing the localization
order sufficiently large, \eqref{eq:angular-shell-decay} with $p=1$
gives
\[
\|U_{m,r}\|_{\ell^1(\Sigma_m)\to L^1(\mathbb T)}
\lesssim
(1+m)^{-1}(1+|r-m|)^{-L},
\]
while \eqref{eq:angular-shell-sum} gives
\[
\|U_{m,r}\|_{\ell^\infty(\Sigma_m)\to L^\infty(\mathbb T)}
\lesssim
(1+|r-m|)^{-L}.
\]
Interpolation yields
\[
\|U_{m,r}\|_{\ell^p(\Sigma_m)\to L^p(\mathbb T)}
\lesssim
(1+m)^{-\frac1p}(1+|r-m|)^{-L},
\]
and the proof is complete.
\end{proof}

\begin{theorem}[Weighted localized synthesis]
\label{thm:weighted-localized-synthesis}
Let $0<p,q\le\infty$, let $\sigma\in\mathbb R$, and let
$\Sigma\subset\mathbb C$ be separated. Suppose that
$(\phi_\xi)_{\xi\in\Sigma}$ is rapidly Fock localized.
Then the synthesis operator
\[
S_\Sigma^\phi:
\ell_{\mathrm{nat};\sigma}^{p,q}(\Sigma)
\longrightarrow
\mathcal F_{\alpha;\sigma}^{p,q},
\qquad
S_\Sigma^\phi c
:=
\sum_{\xi\in\Sigma}c_\xi\phi_\xi,
\]
is well defined and bounded. More precisely,
\begin{equation*}
\label{eq:weighted-localized-synthesis}
\|S_\Sigma^\phi c\|_{\mathcal F_{\alpha;\sigma}^{p,q}}
\lesssim_{\alpha,p,q,\sigma}
\|c\|_{\ell_{\mathrm{nat};\sigma}^{p,q}(\Sigma)},
\qquad
c\in \ell_{\mathrm{nat};\sigma}^{p,q}(\Sigma),
\end{equation*}
and the defining series converges absolutely and locally uniformly. The implicit constant may also depend on the separation data of
$\Sigma$ and the localization constants of
$(\phi_\xi)_{\xi\in\Sigma}$.

\smallskip
\noindent
If $q=\infty$ and
$c\in \ell_{\mathrm{nat};\sigma,0}^{p,\infty}(\Sigma)$, then
\begin{equation}
\label{eq:weighted-localized-synthesis-vanishing}
(1+r)^\sigma
M_p(S_\Sigma^\phi c,r)e^{-\frac{\alpha}{2}r^2}
\longrightarrow0
\qquad (r\to\infty).
\end{equation}
\end{theorem}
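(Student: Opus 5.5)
The natural route is to decompose $S_\Sigma^\phi c=\sum_{m\ge0}F_m$ by shells, with $F_m$ as in Lemma~\ref{lem:one-shell-synthesis}, and combine the one-shell bound \eqref{eq:one-shell-synthesis} with the annular discretization (F2). Write $c_m=\|c|_{\Sigma_m}\|_{\ell^p(\Sigma_m)}$. Absolute and locally uniform convergence come first. By \eqref{eq:rapid-fock-localization}, for $z$ in a compact set $K$ we have $|c_\xi\phi_\xi(z)|\lesssim_K |c_\xi|(1+|\xi|)^{-N}$ for every $N$. On the other hand, $|c_\xi|\le c_m\le (1+m)^{-\nu(p,q)-\sigma}\|c\|_{\ell^{p,q}_{\mathrm{nat};\sigma}}$ for $\xi\in\Sigma_m$, and \eqref{eq:separated-shell-counting} gives $\#\Sigma_m\lesssim 1+m$. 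Taking $N$ large therefore gives a summable majorant, so $S_\Sigma^\phi c$ is entire and the series converges absolutely and locally uniformly.

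For the norm estimate, set $s=\min\{1,p\}$ and use the $s$-subadditivity of $M_p(\cdot,r)$ (Minkowski for $p\ge1$, the $p$-triangle inequality for $p<1$). Then \eqref{eq:one-shell-synthesis} gives
\[
M_p(S_\Sigma^\phi c,r)^s e^{-\frac{s\alpha}{2}r^2}
\lesssim
\sum_{m\ge0} c_m^s(1+m)^{-\frac sp}(1+|r-m|)^{-Ls}.
\]
Multiplying by $(1+r)^{s\sigma}$ and using Peetre's inequality $(1+r)^{\sigma}\lesssim(1+m)^{\sigma}(1+|r-m|)^{|\sigma|}$, we may take the supremum over $r\in[k,k+1)$ and obtain
\[
B_{p,\sigma}(f;k)^s\lesssim \sum_{m} b_m^s(1+m)^{-\frac sq}\,(1+|k-m|)^{-L's},
\qquad b_m:=(1+m)^{\nu(p,q)+\sigma}c_m .
\]
Here we wrote $(1+m)^{-1/p}=(1+m)^{\nu(p,q)}(1+m)^{-1/q}$, and $L'=L-|\sigma|$ can be taken as large as we like.

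Now apply (F2). For $q=\infty$ the factor $(1+m)^{-1/q}$ equals $1$, and $\sup_k B_{p,\sigma}(f;k)\lesssim\sup_m b_m$ follows because the kernel $(1+|k-m|)^{-L's}$ has summable rows. For $q<\infty$ we must bound $\sum_k(1+k)B_{p,\sigma}(f;k)^q$ by $\sum_m b_m^q$. Put $d_m=b_m(1+m)^{-1/q}$. The weight $(1+k)$ is comparable to $(1+m)$ up to a factor $(1+|k-m|)$, and this factor is absorbed by the kernel. It then suffices to show that the discrete convolution with $(1+|k-m|)^{-L's}$, viewed as an operator on sequences of $s$-th powers, is bounded on $\ell^{q/s}(\mathbb N_0)$ with weight $(1+k)$. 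When $q/s\ge1$ this is Young's or Schur's inequality. When $q/s<1$ we use instead the $q/s$-subadditivity $(\sum x_m)^{q/s}\le\sum x_m^{q/s}$ and then sum over $k$ first. In both cases we get $\sum_k(1+k)B^q\lesssim\sum_m(1+m)d_m^q=\sum_m b_m^q$, which is the claimed bound. I expect this bookkeeping to be the main obstacle: handling all regimes of $s\le q$ versus $q<s$ with the shifted weight, while the $(1+m)^{-1/q}$ factor from the one-shell estimate exactly cancels the area weight $(1+k)$ in (F2).

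For the vanishing statement \eqref{eq:weighted-localized-synthesis-vanishing} with $q=\infty$ and $b_m\to0$, fix $\varepsilon>0$ and choose $M$ with $b_m<\varepsilon$ for $m\ge M$. In the pointwise estimate above, the tail $m\ge M$ contributes at most $C\varepsilon^s$ uniformly in $r$. The finitely many terms $m<M$ contribute $\lesssim\sum_{m<M}b_m^s(1+|r-m|)^{-L's}\to0$ as $r\to\infty$. Hence the $\limsup$ is at most $C\varepsilon^s$, and letting $\varepsilon\to0$ gives the claim.
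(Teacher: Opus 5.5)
Your proposal is correct and follows essentially the same route as the paper: the shell decomposition through Lemma~\ref{lem:one-shell-synthesis}, powered subadditivity of $M_p(\cdot,r)$, the Peetre-type weight shift, a discrete convolution estimate combined with (F2), and the same $c_0$-preservation argument for the vanishing case. The only bookkeeping difference is that the paper takes $s=\min\{1,p,q\}$ from the start, so that $q/s\ge1$ always holds and Young's inequality covers every case, whereas you take $s=\min\{1,p\}$ and treat $q<s$ separately by $q/s$-subadditivity; both work.
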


\begin{proof}
For simplicity, throughout this proof, we set
\[
\nu:=\nu(p,q)+\sigma
=\frac1q-\frac1p+\sigma.
\]
Then 
\[\ell_{\mathrm{nat};\sigma}^{p,q}(\Sigma) = \ell_{w_\nu}^{p,q}(\Sigma), \qquad \ell_{\mathrm{nat};\sigma,0}^{p,\infty}(\Sigma) = \ell_{w_\nu,0}^{p,\infty}(\Sigma).
\]
For $m\in\mathbb N_0$, put
\[
c_m
:=
\|c|_{\Sigma_m}\|_{\ell^p(\Sigma_m)}.
\]

\smallskip
\noindent
We first show that $S_\Sigma^\phi c$ is well defined. Let
$K\subset\mathbb C$ be compact. By rapid localization and
\eqref{eq:separated-shell-counting}, for every $N>0$,
\[
\sup_{z\in K}|\phi_\xi(z)|
\lesssim_{\alpha,K,N}
(1+|\xi|)^{-N}
\qquad \text{and} \qquad
\sum_{\xi\in\Sigma_m}|c_\xi|
\lesssim
(\#\Sigma_m)^{\frac{1}{p^*}}c_m
\lesssim
(1+m)^{\frac{1}{p^*}}c_m.
\]
Moreover,
\[
c_m
\le
(1+m)^{-\nu}
\|c\|_{\ell_{w_\nu}^{p,q}(\Sigma)}.
\]
Consequently,
\[
\sup_{z\in K}
\sum_{\xi\in\Sigma_m}|c_\xi\phi_\xi(z)|
\lesssim
\|c\|_{\ell_{w_\nu}^{p,q}(\Sigma)}
(1+m)^{-N+\frac{1}{p^*}-\nu}.
\]
Choosing $N$ sufficiently large gives a summable majorant. Hence the
defining series for $S_\Sigma^\phi c$ converges absolutely and locally
uniformly and defines an entire function
\[
F:=S_\Sigma^\phi c.
\]

\smallskip
\noindent
It remains to prove the norm estimate. Let
\[
s:=\min\{1,p,q\}
\qquad\text{and}\qquad
F_m
:=
\sum_{\xi\in\Sigma_m}c_\xi\phi_\xi,
\quad m\ge0.
\]
For a finite set $E\subset\mathbb N_0$, put
\[
F_E:=\sum_{m\in E}F_m.
\]
Minkowski's inequality when $p\ge1$ and $p$-subadditivity when
$0<p<1$, together with $s\le1$ and $s\le p$, give
\[
\left(
M_p(F_E,r)e^{-\frac{\alpha}{2}r^2}
\right)^s
\le
\sum_{m\in E}
\left(
M_p(F_m,r)e^{-\frac{\alpha}{2}r^2}
\right)^s.
\]
Hence Lemma~\ref{lem:one-shell-synthesis} and
$1+|r-m|\asymp1+|n-m|$ for $r\in[n,n+1)$ yield
\[
B_p(F_E;n)^s
\lesssim
\sum_{m\in E}
(1+|n-m|)^{-Ls}
(1+m)^{-\frac{s}{p}}c_m^s.
\]
Applying this to the finite initial shell sums and then passing to the
locally uniform limit gives
\begin{equation}
\label{eq:weighted-synthesis-block}
B_p(F;n)^s
\lesssim
\sum_{m=0}^\infty
(1+|n-m|)^{-Ls}
(1+m)^{-\frac{s}{p}}c_m^s.
\end{equation}

\smallskip
\noindent
Suppose first that $0<q<\infty$. Take
\[
L:=\left|\sigma+\frac1q\right|+\frac2s.
\]
Since
\(
\nu=\frac1q-\frac1p+\sigma,
\)
multiplying \eqref{eq:weighted-synthesis-block} by
$(1+n)^{s(\sigma+\frac1q)}$ gives
\[
\Bigl((1+n)^{\sigma+\frac1q}B_p(F;n)\Bigr)^s
\lesssim
\sum_{m=0}^\infty
(1+|n-m|)^{-Ls}
\left(\frac{1+n}{1+m}\right)^{s(\sigma+\frac1q)}
\bigl((1+m)^\nu c_m\bigr)^s.
\]
Using
\[
\left(\frac{1+n}{1+m}\right)^{\sigma+\frac1q}
\le
(1+|n-m|)^{|\sigma+\frac1q|},
\]
we obtain
\begin{equation*}
\label{eq:weighted-synthesis-convolution}
\Bigl((1+n)^{\sigma+\frac1q}B_p(F;n)\Bigr)^s
\lesssim
\sum_{m=0}^\infty
(1+|n-m|)^{-2}
\bigl((1+m)^\nu c_m\bigr)^s.
\end{equation*}
Since $\frac{q}{s}\ge1$, Young's convolution inequality on
$\ell^{\frac{q}{s}}(\mathbb Z)$, after extending the sequences by zero to
negative indices, gives
\[
\left\|
\bigl((1+n)^{\sigma+\frac1q}B_p(F;n)\bigr)^s
\right\|_{\ell^{\frac{q}{s}}}
\lesssim
\left\|
\bigl((1+m)^\nu c_m\bigr)^s
\right\|_{\ell^{\frac{q}{s}}}.
\]
Equivalently,
\[
\sum_{n=0}^\infty
(1+n)^{1+\sigma q}B_p(F;n)^q
\lesssim
\sum_{m=0}^\infty
(1+m)^{\nu q}c_m^q.
\]
Since
\(
B_{p,\sigma}(F;n)
\asymp_\sigma
(1+n)^\sigma B_p(F;n),
\)
{\rm(F2)} yields
\[
\|S_\Sigma^\phi c\|_{\mathcal F_{\alpha;\sigma}^{p,q}}^q
\lesssim
\sum_{m=0}^\infty
(1+m)^{\nu q}c_m^q
=
\|c\|_{\ell_{w_\nu}^{p,q}(\Sigma)}^q.
\]

\smallskip
\noindent
Suppose now that $q=\infty$. Then
\(s=\min\{1,p\}\) and \(\nu=\sigma-\frac1p\).
Take
\[
L:=|\sigma|+\frac2s.
\]
Multiplying \eqref{eq:weighted-synthesis-block} by
$(1+n)^{s\sigma}$ and using
\[
\left(\frac{1+n}{1+m}\right)^{s\sigma}
\le
(1+|n-m|)^{s|\sigma|}
\]
gives
\begin{equation}
\label{eq:weighted-synthesis-endpoint}
\bigl((1+n)^\sigma B_p(F;n)\bigr)^s
\lesssim
\sum_{m=0}^\infty
(1+|n-m|)^{-2}
\bigl((1+m)^\nu c_m\bigr)^s.
\end{equation}
Taking suprema, we obtain
\[
\sup_{n\ge0}(1+n)^\sigma B_p(F;n)
\lesssim
\sup_{m\ge0}(1+m)^\nu c_m.
\]
Since
\(
B_{p,\sigma}(F;n)
\asymp_\sigma
(1+n)^\sigma B_p(F;n),
\)
{\rm(F2)} yields
\[
\|S_\Sigma^\phi c\|_{\mathcal F_{\alpha;\sigma}^{p,\infty}}
\lesssim
\|c\|_{\ell_{w_\nu}^{p,\infty}(\Sigma)}.
\]
Thus $S_\Sigma^\phi$ is bounded in all cases.

\smallskip
\noindent
Finally, suppose that
\(
c\in\ell_{w_\nu,0}^{p,\infty}(\Sigma).
\)
Then
\[
\bigl((1+m)^\nu c_m\bigr)^s\longrightarrow0.
\]
Since convolution with the $\ell^1$-kernel
\(
\bigl((1+|j|)^{-2}\bigr)_{j\in\mathbb Z}
\)
preserves $c_0$, \eqref{eq:weighted-synthesis-endpoint} implies
\[
(1+n)^\sigma B_p(F;n)\longrightarrow0.
\]
For $r\in[n,n+1)$,
\[
(1+r)^\sigma
M_p(S_\Sigma^\phi c,r)e^{-\frac{\alpha}{2}r^2}
\lesssim_\sigma
(1+n)^\sigma B_p(F;n),
\]
and therefore
\eqref{eq:weighted-localized-synthesis-vanishing} follows.
\end{proof}

\begin{corollary}
\label{cor:normalized-localized-synthesis}
Let $0<p,q\le\infty$, let $\Sigma\subset\mathbb C$ be separated, and
let $(\phi_\xi)_{\xi\in\Sigma}$ be rapidly Fock localized. Define
\[
S_\Sigma^{\phi,p,q}a
:=
\sum_{\xi\in\Sigma}
a_\xi(1+|\xi|)^{\frac1p-\frac1q}\phi_\xi .
\]
Then
\(
S_\Sigma^{\phi,p,q}:
\ell^{p,q}(\Sigma)
\longrightarrow
\mathcal F_\alpha^{p,q}
\)
is a bounded linear operator, and
\begin{equation}\label{eq:normalized-localized-synthesis}
\|S_\Sigma^{\phi,p,q}a\|_{\mathcal F_\alpha^{p,q}}
\lesssim_{\alpha,p,q}
\|a\|_{\ell^{p,q}(\Sigma)}.
\end{equation}
If $q=\infty$, then
\(
S_\Sigma^{\phi,p,\infty}:
\ell_0^{p,\infty}(\Sigma)
\longrightarrow
f_\alpha^{p,\infty}
\)
is bounded. Moreover, if $q<\infty$, the finite-shell partial sums
converge in $\mathcal F_\alpha^{p,q}$, while for
$a\in\ell_0^{p,\infty}(\Sigma)$ they converge in
$\mathcal F_\alpha^{p,\infty}$.
\end{corollary}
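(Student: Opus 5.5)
The plan is to reduce the corollary to Theorem~\ref{thm:weighted-localized-synthesis} with $\sigma=0$ by absorbing the polynomial factor into the coefficients. Given $a\in\ell^{p,q}(\Sigma)$, define
\[
c_\xi:=a_\xi(1+|\xi|)^{\frac1p-\frac1q}=a_\xi(1+|\xi|)^{-\nu(p,q)},
\]
so that $S_\Sigma^{\phi,p,q}a=S_\Sigma^\phi c$. Since $1+m\le 1+|\xi|<2(1+m)$ for $\xi\in\Sigma_m$, we get $(1+m)^{\nu(p,q)}\|c|_{\Sigma_m}\|_{\ell^p(\Sigma_m)}\asymp\|a|_{\Sigma_m}\|_{\ell^p(\Sigma_m)}$ with constants depending only on $p,q$. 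Hence $\|c\|_{\ell_{\mathrm{nat}}^{p,q}(\Sigma)}\asymp\|a\|_{\ell^{p,q}(\Sigma)}$, and $c\in\ell_{\mathrm{nat},0}^{p,\infty}(\Sigma)$ exactly when $a\in\ell_0^{p,\infty}(\Sigma)$. The theorem with $\sigma=0$ then gives well-definedness, linearity, and \eqref{eq:normalized-localized-synthesis}. For $q=\infty$ and $a\in\ell_0^{p,\infty}(\Sigma)$, \eqref{eq:weighted-localized-synthesis-vanishing} with $\sigma=0$ says $M_p(S_\Sigma^\phi c,r)e^{-\frac{\alpha}{2}r^2}\to0$, i.e.\ the image lies in $f_\alpha^{p,\infty}$.

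For convergence of the finite-shell partial sums, let $E_K=\{0,\dots,K\}$ and let $a^{(K)}$ be the restriction of $a$ to $\bigcup_{m\le K}\Sigma_m$. Then the tail $S_\Sigma^{\phi,p,q}a-S_\Sigma^{\phi,p,q}a^{(K)}=S_\Sigma^{\phi,p,q}(a-a^{(K)})$. This identity holds because the series converges absolutely and locally uniformly, so the sums may be rearranged pointwise. By the bound just proved,
\[
\|S_\Sigma^{\phi,p,q}a-S_\Sigma^{\phi,p,q}a^{(K)}\|_{\mathcal F_\alpha^{p,q}}\lesssim\|a-a^{(K)}\|_{\ell^{p,q}(\Sigma)}
=\Bigl\|\bigl(\|a|_{\Sigma_m}\|_{\ell^p}\bigr)_{m>K}\Bigr\|_{\ell^q}.
\]
For $q<\infty$ this tail tends to $0$ by dominated convergence of the $\ell^q$ sum. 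For $q=\infty$ it equals $\sup_{m>K}\|a|_{\Sigma_m}\|_{\ell^p}$, which tends to $0$ precisely because $a\in\ell_0^{p,\infty}(\Sigma)$.

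There is no real obstacle, since everything is inherited from the weighted synthesis theorem. The only points needing care are the exact bookkeeping of the weight exponent, namely that $\ell_{\mathrm{nat}}^{p,q}=\ell^{p,q}_{w_{\nu(p,q)}}$ matches the factor $(1+|\xi|)^{\frac1p-\frac1q}$ up to shell/pointwise comparability, and the observation that $f_\alpha^{p,\infty}$-membership is exactly the vanishing statement of the theorem.
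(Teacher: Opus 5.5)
Your proposal is correct and follows essentially the same route as the paper. You substitute $c_\xi=a_\xi(1+|\xi|)^{\frac1p-\frac1q}$, use shell comparability to identify $\ell^{p,q}(\Sigma)$ with $\ell_{\mathrm{nat}}^{p,q}(\Sigma)$ (and the little spaces with each other), and invoke Theorem~\ref{thm:weighted-localized-synthesis} with $\sigma=0$, including its vanishing statement for the $f_\alpha^{p,\infty}$ endpoint; your finite-shell convergence argument via $S_\Sigma^{\phi,p,q}a-S_\Sigma^{\phi,p,q}a^{(K)}=S_\Sigma^{\phi,p,q}(a-a^{(K)})$ and boundedness is the paper's final step, written out in slightly more detail.
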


\begin{proof}
Apply Theorem~\ref{thm:weighted-localized-synthesis} with $\sigma=0$.
The coefficient space is then \(\ell_{\mathrm{nat}}^{p,q}(\Sigma)\).
Put
\[
c_\xi
:=
a_\xi(1+|\xi|)^{\frac1p-\frac1q}.
\]
For $\xi\in\Sigma_m$,
\(
1+m\le1+|\xi|<2(1+m),
\)
and therefore
\[
\|c\|_{\ell_{\mathrm{nat}}^{p,q}(\Sigma)}
\asymp_{p,q}
\|a\|_{\ell^{p,q}(\Sigma)}.
\]
Thus \eqref{eq:normalized-localized-synthesis} follows directly from
Theorem~\ref{thm:weighted-localized-synthesis}.

\smallskip
\noindent
If $q=\infty$, then $\nu(p,\infty)=-\frac1p$, and the same comparison gives
\[
a\in\ell_0^{p,\infty}(\Sigma)
\quad\Longleftrightarrow\quad
c\in \ell_{\mathrm{nat},0}^{p,\infty}(\Sigma).
\]
Hence the endpoint assertion of
Theorem~\ref{thm:weighted-localized-synthesis} yields
\[
S_\Sigma^{\phi,p,\infty}a\in f_\alpha^{p,\infty},
\]
and the corresponding operator is bounded.

\smallskip
\noindent
Finally, finite-shell truncations converge in the coefficient space
when $q<\infty$, and also for
$a\in\ell_0^{p,\infty}(\Sigma)$ when $q=\infty$. The boundedness of
the synthesis operator therefore gives the asserted finite-shell
convergence in the corresponding Fock space.
\end{proof}

\section{Local geometry and sampling reconstruction}
\label{sec:local-geometry}

This section develops the basic sampling and interpolation tools
used later. We first establish upper restriction and the associated
separation consequences, then extract separated sampling carriers,
and finally extend the canonical reconstruction of a Hilbert
sampling set to the full mixed-norm scale.

\subsection{Upper restriction and local geometry}
\label{subsec:upper-restriction-local-geometry}

\begin{proposition}
\label{prop:upper-restriction}
Let $0<p,q\le\infty$, and let $\Lambda\subset\mathbb C$ be locally
finite. Assume that $\Lambda$ is relatively separated whenever
$p<\infty$. Then
\[
R_{\alpha,\Lambda}^{p,q}:
\mathcal F_\alpha^{p,q}\longrightarrow\ell^{p,q}(\Lambda)
\]
is bounded. If $q=\infty$, then its restriction to
$f_\alpha^{p,\infty}$ defines a bounded map
\[
R_{\alpha,\Lambda}^{p,\infty}:
f_\alpha^{p,\infty}\longrightarrow\ell_0^{p,\infty}(\Lambda).
\]
\end{proposition}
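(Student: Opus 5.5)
The plan is to reduce everything to a local subharmonicity estimate that controls point values on the shell $\Lambda_k$ by the angular means $M_p(f,r)$ for $r$ in a bounded neighborhood of $[k,k+1)$. Then (F2) converts these shell bounds into the mixed norm.

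\emph{Step 1: local mean-value bound.} For $0<p<\infty$, apply subharmonicity of $|f(w)e^{-\alpha \bar\lambda w}|^p$ on $B(\lambda,1)$. Using $|e^{-\alpha\bar\lambda w}|e^{-\frac\alpha2|\lambda|^2}\cdot e^{\frac{\alpha}{2}|w|^2}=e^{\frac\alpha2|w-\lambda|^2}$, this gives
\[
\bigl(|f(\lambda)|e^{-\frac\alpha2|\lambda|^2}\bigr)^p
\lesssim_{\alpha,p}
\int_{B(\lambda,1)}|f(w)|^pe^{-\frac{\alpha p}{2}|w|^2}\,dA(w).
\]
For $\lambda\in\Lambda_k$, sum this over $\lambda\in\Lambda_k$. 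Relative separation gives bounded overlap of the disks $B(\lambda,1)$, and all of them lie in the annulus $\{k-1\le|w|<k+2\}$. Passing to polar coordinates, we obtain
\[
\sum_{\lambda\in\Lambda_k}\bigl(|f(\lambda)|e^{-\frac\alpha2|\lambda|^2}\bigr)^p
\lesssim
\int_{(k-1)_+}^{k+2}M_p(f,r)^pe^{-\frac{\alpha p}{2}r^2}\,r\,dr
\lesssim
(1+k)\max_{|j-k|\le1}B_p(f;j)^p .
\]
Multiplying by the normalization $(1+|\lambda|)^{\frac1q-\frac1p}\asymp(1+k)^{\frac1q-\frac1p}$, the factor $(1+k)^{1/p}$ cancels the $(1+k)^{-1/p}$. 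This yields
\[
\|R^{p,q}_{\alpha,\Lambda}f|_{\Lambda_k}\|_{\ell^p(\Lambda_k)}
\lesssim (1+k)^{\frac1q}\max_{|j-k|\le1}B_p(f;j).
\]

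For $p=\infty$, relative separation is not needed. Applying the same subharmonicity with exponent $1$, we bound $|f(\lambda)|e^{-\frac\alpha2|\lambda|^2}$ by an average over $B(\lambda,1)$. That average is dominated by $\sup_{(k-1)_+\le r<k+2}M_\infty(f,r)e^{-\frac\alpha2r^2}$, up to the harmless Gaussian factor $e^{\frac\alpha2|w-\lambda|^2}\le e^{\alpha/2}$. Hence we obtain the same shell bound with $p=\infty$, and the factor $(1+k)^{1/q}$ now comes solely from the normalization.

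\emph{Step 2: summation via (F2).} For $q<\infty$, raising the shell bound to the power $q$ and summing gives
\[
\sum_k(1+k)\max_{|j-k|\le1}B_p(f;j)^q\lesssim\sum_j(1+j)B_p(f;j)^q\asymp\|f\|^q_{\mathcal F^{p,q}_\alpha}
\]
by (F2) with $\sigma=0$. For $q=\infty$, take suprema instead: $\|Rf\|_{\ell^{p,\infty}}\lesssim\sup_jB_p(f;j)=\|f\|_{\mathcal F^{p,\infty}_\alpha}$. For $f\in f^{p,\infty}_\alpha$, we have $B_p(f;j)\to0$, so the shell bounds tend to zero and $Rf\in\ell^{p,\infty}_0(\Lambda)$.

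\emph{Main obstacle.} The main difficulty is Step 1 for small $k$ and for $p<1$. The shells near the origin must be handled so that $(k-1)_+$ and the factor $r\,dr$ stay comparable to $(1+k)$. We must also make sure that the subharmonic estimate is applied to $|g|^p$ with $g$ holomorphic, which is valid for all $p>0$, rather than to a non-holomorphic product. The twisting factor $e^{-\alpha\bar\lambda w}$ is included precisely to keep $g$ holomorphic. The conversion from the area integral over the annulus to the circle means $M_p$ is the step where the angular normalization $\frac1{2\pi}$ produces the crucial factor $r\asymp1+k$ that cancels against the $(1+|\lambda|)^{-1/p}$ normalization.
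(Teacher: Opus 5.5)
Your proposal is correct and is essentially the paper's proof: the twisted submean inequality, bounded overlap of the disks $B(\lambda,1)$ from relative separation, polar coordinates on the neighboring annuli, and then (F2), with your bounded-window maximum replacing the paper's finite-window comparison of $\ell^p$ and $\ell^q$ norms. Two small remarks: the gauge identity you display should have $e^{+\frac{\alpha}{2}|\lambda|^2}$ rather than $e^{-\frac{\alpha}{2}|\lambda|^2}$ (the resulting estimate is unaffected), and for $p=\infty$ no subharmonicity is needed, since $|f(\lambda)|e^{-\frac{\alpha}{2}|\lambda|^2}\le B_\infty(f;k)$ holds directly for $\lambda\in\Lambda_k$, which is the bound the paper uses.
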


\begin{proof}
For simplicity, set
\[
\nu:=\nu(p,q)=\frac1q-\frac1p.
\]
Assume first that $0<p<\infty$.
For $k\ge0$, set
\[
S_k(f)
:=
\left\|
\bigl(
f(\lambda)e^{-\frac{\alpha}{2}|\lambda|^2}
\bigr)_{\lambda\in\Lambda_k}
\right\|_{\ell^p(\Lambda_k)}.
\]
By the definition of $R_{\alpha,\Lambda}^{p,q}$ and the shell
comparability $1+|\lambda|\asymp1+k$ on $\Lambda_k$,
\begin{equation}
\label{eq:restriction-shell-blocks}
\|R_{\alpha,\Lambda}^{p,q}f\|_{\ell^{p,q}(\Lambda)}
\asymp_{p,q}
\left\|
\bigl((1+k)^{\nu} S_k(f)\bigr)_{k\ge0}
\right\|_{\ell^q}.
\end{equation}
By \cite[Lemma~2.32]{Zhu2012}, the submean inequality applied to
\[
z\longmapsto
f(z)e^{-\alpha z\overline{\lambda}
+\frac{\alpha}{2}|\lambda|^2}
\]
gives
\[
|f(\lambda)|^p e^{-\frac{p\alpha}{2}|\lambda|^2}
\lesssim_{\alpha,p}
\int_{B(\lambda,1)}
|f(z)|^p e^{-\frac{p\alpha}{2}|z|^2}\,dA(z).
\]
Since $\Lambda$ is relatively separated, the disks
$B(\lambda,1)$, $\lambda\in\Lambda$, have uniformly bounded
overlap. Moreover, if $\lambda\in A_k$, then $B(\lambda,1)$
meets only a fixed number of annuli neighboring $A_k$. Hence
\[
S_k(f)^p
\lesssim
\sum_{\substack{j\ge0\\ |j-k|\le2}}
\int_{A_j}
|f(z)|^p e^{-\frac{p\alpha}{2}|z|^2}\,dA(z),
\]
where the implicit constant is independent of $k$ and $f$.
By polar coordinates,
\[
\int_{A_j}
|f(z)|^p e^{-\frac{p\alpha}{2}|z|^2}\,dA(z)
\lesssim
(1+j)B_p(f;j)^p.
\]
Multiplying by $(1+k)^{\nu p}$ and using
$1+k\asymp1+j$ when $|j-k|\le2$, together with
\(
\nu p+1=\frac pq,
\)
we obtain
\begin{equation}
\label{eq:upper-restriction-block}
\bigl((1+k)^\nu S_k(f)\bigr)^p
\lesssim
\sum_{\substack{j\ge0\\ |j-k|\le2}}
\left[(1+j)^{\frac1q}B_p(f;j)\right]^p.
\end{equation}
Suppose first that $q<\infty$. If $q\le p$, then $\frac{q}{p}\le1$, and
subadditivity gives
\[
\left(
\sum_{\substack{j\ge0\\ |j-k|\le2}} \left[(1+j)^{\frac1q}B_p(f;j)\right]^p
\right)^{\frac{q}{p}}
\le
\sum_{\substack{j\ge0\\ |j-k|\le2}} (1+j)B_p(f;j)^q.
\]
If $p<q$, then the equivalence of the $\ell^p$- and $\ell^q$-norms on the
uniformly bounded index window $\{j\ge0:|j-k|\le2\}$
gives the same conclusion. Hence, in either
case,
\[
\bigl((1+k)^\nu S_k(f)\bigr)^q
\lesssim
\sum_{\substack{j\ge0\\ |j-k|\le2}} (1+j)B_p(f;j)^q.
\]
Summing over $k$ and using the finite overlap of the index windows,
we obtain
\[
\left\|
\bigl((1+k)^\nu S_k(f)\bigr)_{k\ge0}
\right\|_{\ell^q}
\lesssim
\left\|\left((1+j)^{\frac{1}{q}}B_p(f;j)\right)_{j\ge0}\right\|_{\ell^q}.
\]
By {\rm(F2)}, the right-hand side is comparable to
$\|f\|_{\mathcal F_\alpha^{p,q}}$.

\smallskip
\noindent
If $q=\infty$, then $\nu=-\frac{1}{p}$, and the same block estimate gives
\[
(1+k)^{-\frac{1}{p}}S_k(f)
\lesssim
\max_{\substack{j\ge0\\ |j-k|\le2}} B_p(f;j).
\]
Taking suprema and using {\rm(F2)} yields the required estimate.

\smallskip
\noindent
Suppose now that $p=\infty$. With
\[
S_k(f)
:=
\sup_{\lambda\in\Lambda_k}
|f(\lambda)|e^{-\frac{\alpha}{2}|\lambda|^2},
\]
we have directly
\[
S_k(f)\le B_\infty(f;k).
\]
Since now $\nu=\frac{1}{q}$, \eqref{eq:restriction-shell-blocks} and
{\rm(F2)} give
\[
\|R_{\alpha,\Lambda}^{\infty,q}f\|_{\ell^{\infty,q}(\Lambda)}
\lesssim
\|f\|_{\mathcal F_\alpha^{\infty,q}}.
\]
No multiplicity estimate is required.

\smallskip
\noindent
Finally, let $f\in f_\alpha^{p,\infty}$. Then
\(
B_p(f;k)\longrightarrow0.
\)
If $0<p<\infty$, taking $q=\infty$ in
\eqref{eq:upper-restriction-block} gives
\[
\bigl((1+k)^{-\frac{1}{p}}S_k(f)\bigr)^p
\lesssim
\sum_{\substack{j\ge0\\ |j-k|\le2}}
B_p(f;j)^p,
\]
and hence
\[
(1+k)^{-\frac{1}{p}}S_k(f)\longrightarrow0.
\]
Since $1+|\lambda|\asymp1+k$ on $\Lambda_k$,
\[
\left\|
(R_{\alpha,\Lambda}^{p,\infty}f)|_{\Lambda_k}
\right\|_{\ell^p(\Lambda_k)}
\asymp_p
(1+k)^{-\frac{1}{p}}S_k(f),
\]
and hence the left-hand side tends to zero.

\smallskip
\noindent
If $p=\infty$, then
\(
S_k(f)\le B_\infty(f;k)\longrightarrow0,
\)
and the same conclusion is immediate. Therefore
\[
R_{\alpha,\Lambda}^{p,\infty}f
\in\ell_0^{p,\infty}(\Lambda).
\]
\end{proof}

The preceding result gives the upper restriction estimate under the
appropriate local geometric hypothesis. We next record the converse geometric implications needed later.

\begin{proposition}
\label{prop:local-geometric-consequences}
Let $0<p,q\le\infty$, and let $\Lambda\subset\mathbb C$ be locally
finite.

\begin{enumerate}
\item[\textnormal{(a)}]
If $0<p<\infty$ and
\(
R_{\alpha,\Lambda}^{p,q}:
\mathcal F_\alpha^{p,q}\longrightarrow\ell^{p,q}(\Lambda)
\)
is bounded, then $\Lambda$ is relatively separated.

\item[\textnormal{(b)}]
If $\Lambda$ is interpolating for $\mathcal F_\alpha^{p,q}$, then
$\Lambda$ is separated. The same conclusion holds if $\Lambda$ is
interpolating for $f_\alpha^{p,\infty}$.
\end{enumerate}
\end{proposition}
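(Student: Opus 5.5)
For (a), I test the bounded restriction map against normalized kernels. Fix $w\in\mathbb C$ and apply $R_{\alpha,\Lambda}^{p,q}$ to $\kappa_{\alpha,w}$. By (F3), $\|\kappa_{\alpha,w}\|_{\mathcal F_\alpha^{p,q}}\asymp(1+|w|)^{\nu(p,q)}$. On the other side,
\[
|\kappa_{\alpha,w}(\lambda)|e^{-\frac{\alpha}{2}|\lambda|^2}=e^{-\frac{\alpha}{2}|\lambda-w|^2}\ge e^{-\frac{\alpha}{2}}
\qquad\text{for }\lambda\in\Lambda\cap B(w,1).
\]
These points lie in at most three consecutive shells $A_k$ with $|k-|w||\le 2$. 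If $n:=\#(\Lambda\cap B(w,1))$, then some shell $\Lambda_k$ contains at least $n/3$ of them. Hence
\[
\|R_{\alpha,\Lambda}^{p,q}\kappa_{\alpha,w}\|_{\ell^{p,q}(\Lambda)}\ge\|(R\kappa_{\alpha,w})|_{\Lambda_k}\|_{\ell^p(\Lambda_k)}\gtrsim (n/3)^{1/p}(1+|w|)^{\frac1q-\frac1p}.
\]
Here I use $(1+|\lambda|)\asymp(1+|w|)$ and the fact that the outer $\ell^q$ quasi-norm dominates each single block. Comparing the two bounds, the factor $(1+|w|)^{\nu(p,q)}$ cancels, so $n^{1/p}\lesssim\|R\|$ uniformly in $w$. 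Since $p<\infty$, this is exactly relative separation. This step is routine.

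For (b), I work in the full space $\mathcal F_\alpha^{p,q}$, or in the little space $f_\alpha^{p,\infty}$. Interpolation means $R$ is a bounded surjection between complete quasi-Banach spaces. By the open mapping theorem for $F$-spaces there is $C>0$ such that every $a$ in the target has a preimage $f$ with $\|f\|\le C\|a\|$. Given $\lambda\in\Lambda$, I take $a=e_\lambda\,(1+|\lambda|)^{\nu(p,q)}\cdot(1+|\lambda|)^{-\nu(p,q)}$, i.e. the unit vector $e_\lambda$ itself. It has norm $1$ in $\ell^{p,q}(\Lambda)$, and it lies in $\ell_0^{p,\infty}$ because it is finitely supported. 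This gives an $f_\lambda$ with $\|f_\lambda\|_{\mathcal F_\alpha^{p,q}}\le C$, with
\[
f_\lambda(\lambda)e^{-\frac{\alpha}{2}|\lambda|^2}=(1+|\lambda|)^{\nu(p,q)},
\]
and with $f_\lambda(\mu)=0$ for every $\mu\in\Lambda\setminus\{\lambda\}$.

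The key step is a local gradient estimate. Let $g(z):=f_\lambda(z)e^{-\alpha z\bar\lambda+\frac{\alpha}{2}|\lambda|^2}$, so that $|g(z)|=|f_\lambda(z)|e^{-\frac\alpha2|z|^2}e^{\frac\alpha2|z-\lambda|^2}$. On $B(\lambda,2)$, the point evaluation estimate \eqref{eq:intro-point-evaluation} gives
\[
|g(z)|\lesssim(1+|z|)^{\frac1p-\frac1q}C\asymp C(1+|\lambda|)^{-\nu(p,q)}.
\]
By the Cauchy estimates, $|g'|\lesssim C(1+|\lambda|)^{-\nu(p,q)}$ on $B(\lambda,1)$. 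Now take $\mu\in\Lambda$ with $0<|\mu-\lambda|<1$. Then $g(\mu)=0$, while
\[
|g(\lambda)|=(1+|\lambda|)^{\nu(p,q)}\cdot(1+|\lambda|)^{-\nu(p,q)}\cdot(1+|\lambda|)^{\nu(p,q)}\cdot\ldots
\]
After normalization, the ratio $|g(\lambda)|/\sup_{B(\lambda,2)}|g|$ is bounded below by $1/C$ up to absolute constants. Hence $1\lesssim C|\lambda-\mu|$, which gives $|\lambda-\mu|\gtrsim 1/C$ uniformly. That is separation.

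The main obstacle is making sure the polynomial factors match exactly. The normalization $(1+|\lambda|)^{\nu(p,q)}$ in $R$ was chosen to be the reciprocal of the point-evaluation scale, and $(1+|z|)\asymp(1+|\lambda|)$ on $B(\lambda,2)$. The other point to check is that the open mapping theorem applies in the quasi-Banach setting. This holds because (F1) and the completeness remarks give complete metrizable topological vector spaces. The little endpoint uses the same argument, since $e_\lambda\in\ell_0^{p,\infty}(\Lambda)$.
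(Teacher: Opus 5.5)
Your proof follows the paper's argument essentially step for step. In (a) you test on normalized kernels and count samples in a single annular block; the paper divides by $(1+|w|)^{\nu(p,q)}$ beforehand, while you cancel that factor afterwards. In (b) you combine the quasi-Banach open mapping theorem for the datum $e_\lambda$ with the gauge transform, Cauchy's estimate, the point-evaluation bound and the fundamental theorem of calculus, and you treat the little endpoint the same way.

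One slip should be corrected. $R_{\alpha,\Lambda}^{p,q}f_\lambda=e_\lambda$ means
\[
f_\lambda(\lambda)e^{-\frac{\alpha}{2}|\lambda|^2}=(1+|\lambda|)^{-\nu(p,q)},
\]
not $(1+|\lambda|)^{\nu(p,q)}$. With your value, the argument would only give $|\lambda-\mu|\gtrsim(1+|\lambda|)^{2\nu(p,q)}/C$, which is not uniform when $q>p$. The garbled display for $|g(\lambda)|$ should simply read
\[
(1+|\lambda|)^{-\nu(p,q)}
=|g(\lambda)-g(\mu)|
\le|\lambda-\mu|\sup_{B(\lambda,1)}|g'|
\lesssim C\,|\lambda-\mu|\,(1+|\lambda|)^{-\nu(p,q)},
\]
which yields $|\lambda-\mu|\gtrsim 1/C$ exactly as in the paper.
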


\begin{proof}
For simplicity, set
\[
\nu:=\nu(p,q)=\frac1q-\frac1p,
\qquad
g_w:=(1+|w|)^{-\nu}\kappa_{\alpha,w}.
\]
By {\rm(F3)},
\(
\|g_w\|_{\mathcal F_\alpha^{p,q}}\asymp_{\alpha,p,q}1
\)
uniformly in $w\in\mathbb C$.

\smallskip
\noindent
For (a), let
\(
E_w=\Lambda\cap B(w,1).
\)
If $\lambda\in E_w$, then
\[
|\kappa_{\alpha,w}(\lambda)|
e^{-\frac{\alpha}{2}|\lambda|^2}
=
e^{-\frac{\alpha}{2}|\lambda-w|^2}
\ge e^{-\frac{\alpha}{2}}.
\]
Since $1+|\lambda|\asymp1+|w|$ on $B(w,1)$, it follows that
\[
|g_w(\lambda)|e^{-\frac{\alpha}{2}|\lambda|^2}
(1+|\lambda|)^{\nu}
\gtrsim1,
\]
with an implicit constant independent of $w$ and $\lambda$.

\smallskip
\noindent
The disk $B(w,1)$ meets at most a fixed number of the annuli $A_k$.
Hence, for some $k=k(w)$,
\[
\#(E_w\cap A_k)\gtrsim \#E_w.
\]
Since every normalized sample corresponding to
$\lambda\in E_w$ is bounded below by a fixed positive constant,
\[
\|R_{\alpha,\Lambda}^{p,q}g_w\|_{\ell^{p,q}(\Lambda)} \geq \left\|
(R_{\alpha,\Lambda}^{p,q}g_w)|_{\Lambda_k}
\right\|_{\ell^p}
\gtrsim
(\#E_w)^{\frac{1}{p}}.
\]
By the assumed boundedness of the restriction map,
\[
\bigl(\#E_w\bigr)^{\frac{1}{p}}
\lesssim
\|g_w\|_{\mathcal F_\alpha^{p,q}}
\lesssim1
\]
uniformly in $w$. Thus
\[
\sup_{w\in\mathbb C}
\#\bigl(\Lambda\cap B(w,1)\bigr)<\infty,
\]
so $\Lambda$ is relatively separated.

\smallskip
\noindent
For (b), suppose first that $\Lambda$ is interpolating for
$\mathcal F_\alpha^{p,q}$. By the quasi-Banach open mapping theorem,
there is a constant $C>0$ such that every
$a\in\ell^{p,q}(\Lambda)$ admits an interpolant
$f\in\mathcal F_\alpha^{p,q}$ satisfying
\[
R_{\alpha,\Lambda}^{p,q}f=a,
\qquad
\|f\|_{\mathcal F_\alpha^{p,q}}
\le C\|a\|_{\ell^{p,q}(\Lambda)}.
\]
Fix distinct $\lambda,\mu\in\Lambda$ and take the one-point datum
$a=e_\lambda$, for which
\(
\|e_\lambda\|_{\ell^{p,q}(\Lambda)}=1.
\)
Then the corresponding interpolant satisfies
\[
f(\lambda)e^{-\frac{\alpha}{2}|\lambda|^2}
=(1+|\lambda|)^{-\nu},
\qquad
f(\mu)=0, \qquad \text{and} \qquad
\|f\|_{\mathcal F_\alpha^{p,q}}\le C.
\]
Set
\[
f_\lambda^\#(z)
:=
f(z)e^{-\alpha z\overline{\lambda}
+\frac{\alpha}{2}|\lambda|^2}.
\]
If $|\lambda-\mu|\ge1$, there is nothing to prove. Otherwise,
the line segment joining $\lambda$ and $\mu$ lies in $B(\lambda,1)$,
and the fundamental theorem of calculus gives
\[
(1+|\lambda|)^{-\nu}
=
|f_\lambda^\#(\lambda)-f_\lambda^\#(\mu)|
\le
|\lambda-\mu|
\sup_{z\in B(\lambda,1)}
|(f_\lambda^\#)'(z)|.
\]
By Cauchy's estimate and the gauge identity,
\[
\sup_{z\in B(\lambda,1)}
|(f_\lambda^\#)'(z)|
\lesssim
\sup_{z\in B(\lambda,2)}
|f_\lambda^\#(z)|
\lesssim_\alpha
\sup_{z\in B(\lambda,2)}
|f(z)|e^{-\frac{\alpha}{2}|z|^2}.
\]
The point-evaluation estimate \eqref{eq:intro-point-evaluation} and the comparability
$1+|z|\asymp1+|\lambda|$ on $B(\lambda,2)$ therefore give
\[
\sup_{z\in B(\lambda,1)}
|(f_\lambda^\#)'(z)|
\lesssim
(1+|\lambda|)^{-\nu},
\]
with a constant independent of \(\lambda\) and \(\mu\). Consequently,
\[
|\lambda-\mu|\gtrsim1,
\]
uniformly for distinct $\lambda,\mu\in\Lambda$. Hence $\Lambda$ is
separated.

\smallskip
\noindent
If instead $\Lambda$ is interpolating for
$f_\alpha^{p,\infty}$, then
$e_\lambda\in\ell_0^{p,\infty}(\Lambda)$ and
\(
\|e_\lambda\|_{\ell^{p,\infty}(\Lambda)}=1.
\)
The same open-mapping argument gives an interpolant
\(
f\in f_\alpha^{p,\infty}\subset\mathcal F_\alpha^{p,\infty}
\)
with uniformly bounded norm. Repeating the preceding argument  with
$q=\infty$, so that
\(
\nu(p, \infty)=-\frac1p
\)
gives the same uniform lower bound for $|\lambda-\mu|$.
\end{proof}

\subsection{Separated sampling carriers}
\label{subsec:separated-sampling-carriers}

We next show that a sampling configuration contains a separated
subconfiguration that retains the lower sampling estimate. The main
analytic input is the following local oscillation estimate.

\begin{lemma}
\label{lem:local-oscillation}
Let $0<p,q\le\infty$.
Then, for every \(f\in H(\mathbb C)\), the following estimates hold whenever \(|z-w|<\delta<1\).

\smallskip
\noindent
If $0<p<\infty$, then
\begin{align*}
|f(z)|e^{-\frac{\alpha}{2}|z|^2}(1+|z|)^{\nu(p,q)}
&\lesssim_{\alpha,p,q}
\, |f(w)|e^{-\frac{\alpha}{2}|w|^2}(1+|w|)^{\nu(p,q)}\\
&\qquad\quad
+\delta(1+|w|)^{\nu(p,q)}
\left(
\int_{B(w,2)}
|f(\zeta)|^p e^{-\frac{p\alpha}{2}|\zeta|^2}\,dA(\zeta)
\right)^{\frac{1}{p}}.
\end{align*}
If $p=\infty$, then
\begin{align*}
|f(z)|e^{-\frac{\alpha}{2}|z|^2}(1+|z|)^{\nu(p,q)}
&\lesssim_{\alpha,q}
\, |f(w)|e^{-\frac{\alpha}{2}|w|^2}(1+|w|)^{\nu(p,q)}\\
&\quad
+\delta(1+|w|)^{\nu(p,q)}
\sup_{\zeta\in B(w,2)}
|f(\zeta)|e^{-\frac{\alpha}{2}|\zeta|^2}.
\end{align*}
\end{lemma}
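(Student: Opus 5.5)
The plan is to use the same gauge transformation as in the proof of Proposition~\ref{prop:local-geometric-consequences}(b). Fix $w$ and set
\[
f_w^\#(\zeta):=f(\zeta)e^{-\alpha\zeta\overline w+\frac{\alpha}{2}|w|^2}.
\]
This function is entire, and
\[
|f_w^\#(\zeta)|=|f(\zeta)|e^{-\frac{\alpha}{2}|\zeta|^2}e^{\frac{\alpha}{2}|\zeta-w|^2}.
\]
Thus on $B(w,2)$ the quantities $|f_w^\#(\zeta)|$ and $|f(\zeta)|e^{-\frac{\alpha}{2}|\zeta|^2}$ are comparable, with constants depending only on $\alpha$. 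In particular $|f_w^\#(w)|=|f(w)|e^{-\frac{\alpha}{2}|w|^2}$. For $|z-w|<\delta<1$ the segment $[w,z]$ lies in $B(w,1)$, and the fundamental theorem of calculus gives
\[
|f_w^\#(z)|\le|f_w^\#(w)|+\delta\sup_{B(w,1)}|(f_w^\#)'|.
\]
Applying the gauge comparability at the point $z$ (where $e^{\frac{\alpha}{2}|z-w|^2}\le e^{\alpha/2}$) then yields
\[
|f(z)|e^{-\frac{\alpha}{2}|z|^2}\lesssim_\alpha |f(w)|e^{-\frac{\alpha}{2}|w|^2}+\delta\sup_{B(w,1)}|(f_w^\#)'|.
\]

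Next I bound the derivative. For $\zeta\in B(w,1)$, the Cauchy estimate on $B(\zeta,1/2)\subset B(w,3/2)$ gives
\[
|(f_w^\#)'(\zeta)|\lesssim\sup_{B(w,3/2)}|f_w^\#|.
\]
When $p=\infty$, the comparability immediately bounds this by $\sup_{B(w,2)}|f|e^{-\frac{\alpha}{2}|\cdot|^2}$. When $0<p<\infty$, I apply the subharmonicity of $|f_w^\#|^p$ (the submean inequality, as in \cite[Lemma~2.32]{Zhu2012}) on disks $B(\eta,1/2)\subset B(w,2)$ for $\eta\in B(w,3/2)$. This gives
\[
\sup_{B(w,3/2)}|f_w^\#|^p\lesssim\int_{B(w,2)}|f_w^\#|^p\,dA\asymp_{\alpha,p}\int_{B(w,2)}|f(\zeta)|^pe^{-\frac{p\alpha}{2}|\zeta|^2}\,dA(\zeta).
\]
Both estimates hold for arbitrary $p>0$, because subharmonicity of $|g|^p$ is valid for every $p>0$.

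Finally I restore the polynomial weight. Since $|z-w|<1$, we have $1+|z|\asymp1+|w|$, and hence
\[
(1+|z|)^{\nu(p,q)}\asymp_{p,q}(1+|w|)^{\nu(p,q)}.
\]
Multiplying the unweighted inequality by $(1+|z|)^{\nu(p,q)}$ and replacing this factor by $(1+|w|)^{\nu(p,q)}$ in both terms gives the stated estimates. All constants depend only on $\alpha,p,q$, and nothing depends on $f$, $z$, $w$ or $\delta$.

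No step is a real obstacle. The only points needing care are checking the disk containments, so that the derivative and submean estimates stay inside $B(w,2)$, and confirming that the quasi-Banach range $p<1$ needs no separate treatment.
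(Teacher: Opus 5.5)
Your proposal is correct and follows the paper's proof essentially step for step. It uses the same gauge transform $f_w^\#$, the fundamental theorem of calculus on the segment $[w,z]\subset B(w,1)$, and Cauchy's estimate combined with the submean inequality for $|f_w^\#|^p$ on fixed-radius disks inside $B(w,2)$ (or the gauge identity when $p=\infty$), and it ends with $1+|z|\asymp 1+|w|$ to move the polynomial weight. Your intermediate disk $B(w,3/2)$ just makes explicit the containments that the paper leaves implicit.
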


\begin{proof}
For simplicity, set
\[
\nu:=\nu(p,q)=\frac1q-\frac1p, \qquad 
f_w^\#(z)
:=
f(z)e^{-\alpha z\overline w+\frac{\alpha}{2}|w|^2}.
\]
Then
\[
|f_w^\#(z)|
=
|f(z)|e^{-\frac{\alpha}{2}|z|^2}
e^{\frac{\alpha}{2}|z-w|^2}.
\]
The fundamental theorem of calculus gives
\[
|f_w^\#(z)|
\le
|f_w^\#(w)|
+
\delta
\sup_{\zeta\in B(w,1)}
|(f_w^\#)'(\zeta)|.
\]
If $0<p<\infty$, Cauchy's estimate followed by the submean
inequality for the subharmonic function $|f_w^\#|^p$, applied on
fixed-radius disks contained in $B(w,2)$, yields
\[
\sup_{\zeta\in B(w,1)}
|(f_w^\#)'(\zeta)|
\lesssim
\left(
\int_{B(w,2)}
|f_w^\#(\zeta)|^p\,dA(\zeta)
\right)^{\frac{1}{p}}.
\]
Since $|\zeta-w|\le2$ on $B(w,2)$,
\[
\sup_{\zeta\in B(w,1)}
|(f_w^\#)'(\zeta)|
\lesssim_{\alpha,p}
\left(
\int_{B(w,2)}
|f(\zeta)|^p e^{-\frac{p\alpha}{2}|\zeta|^2}\,dA(\zeta)
\right)^{\frac{1}{p}}.
\]
For $p=\infty$, Cauchy's estimate and the gauge identity give instead
\[
\sup_{\zeta\in B(w,1)}
|(f_w^\#)'(\zeta)|
\lesssim_{\alpha}
\sup_{\zeta\in B(w,2)}
|f(\zeta)|e^{-\frac{\alpha}{2}|\zeta|^2}.
\]
Finally, we have
\[
|f_w^\#(w)|
=
|f(w)|e^{-\frac{\alpha}{2}|w|^2},
\qquad
|f(z)|e^{-\frac{\alpha}{2}|z|^2}
\le |f_w^\#(z)|, \qquad \text{and} \qquad
1+|z|\asymp1+|w|
\]
whenever $|z-w|<1$. This proves the two estimates.
\end{proof}

\medskip
\noindent
Let $\Lambda\subset\mathbb C$ be locally finite. For $0<\delta<1$,
choose a maximal $\delta$-separated subset
\[
\Gamma_\delta\subset\Lambda.
\]
Since $\Lambda$ is countable, such a set may be obtained by a greedy
enumeration, retaining successively each point whose distance from all
previously retained points is at least $\delta$. By maximality, for
every $\lambda\in\Lambda$ there is
$\gamma(\lambda)\in\Gamma_\delta$ such that
\begin{equation}
\label{eq:carrier-assignment}
|\lambda-\gamma(\lambda)|<\delta.
\end{equation}
When $\Lambda$ is relatively separated, we write
\[
N_\Lambda(R)
:=
\sup_{z\in\mathbb C}\#\bigl(\Lambda\cap B(z,R)\bigr),
\qquad R>0.
\]
Thus $N_\Lambda(R)<\infty$ for every fixed $R>0$.

\begin{proposition}
\label{prop:carrier-compression}
Let $0<p,q\le\infty$, and let $\Lambda\subset\mathbb C$ be locally
finite. Assume that $\Lambda$ is relatively separated whenever
$p<\infty$. Set
\(
s:=\min\{1,p,q\}.
\)
Then, for every $0<\delta<1$, every maximal $\delta$-separated
subset $\Gamma_\delta\subset\Lambda$, and every
$f\in\mathcal F_\alpha^{p,q}$, 
\begin{equation}
\label{eq:carrier-compression}
\|R_{\alpha,\Lambda}^{p,q}f\|_{\ell^{p,q}(\Lambda)}^s
\lesssim_{\alpha,p,q}
\|R_{\alpha,\Gamma_\delta}^{p,q}f\|_
{\ell^{p,q}(\Gamma_\delta)}^s
+
\delta^s\|f\|_{\mathcal F_\alpha^{p,q}}^s,
\end{equation}
where the implicit constant is independent of $\delta$ and $f$.
\end{proposition}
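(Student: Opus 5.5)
The plan is to compare each normalized sample at $\lambda\in\Lambda$ with the normalized sample at its carrier point $\gamma(\lambda)\in\Gamma_\delta$ from \eqref{eq:carrier-assignment}. The comparison uses Lemma~\ref{lem:local-oscillation} with $z=\lambda$, $w=\gamma(\lambda)$ and $|z-w|<\delta$. Writing $a_\lambda$ for the entries of $R_{\alpha,\Lambda}^{p,q}f$, the lemma gives the pointwise bound
\[
|a_\lambda|\lesssim |b_{\gamma(\lambda)}|+\delta\, E_{\gamma(\lambda)},
\]
where $b=R_{\alpha,\Gamma_\delta}^{p,q}f$. For $p<\infty$ the error is $E_\gamma=(1+|\gamma|)^{\nu}\bigl(\int_{B(\gamma,2)}|f|^pe^{-\frac{p\alpha}{2}|\zeta|^2}dA\bigr)^{1/p}$; for $p=\infty$ the integral is replaced by the supremum over $B(\gamma,2)$. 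Since $\ell^{p,q}(\Lambda)$ is $s$-normed, it then suffices to bound the $\ell^{p,q}(\Lambda)$ quasi-norms of the two sequences $(b_{\gamma(\lambda)})_\lambda$ and $(E_{\gamma(\lambda)})_\lambda$. The first should be bounded by $\|b\|_{\ell^{p,q}(\Gamma_\delta)}$ and the second by $\|f\|_{\mathcal F_\alpha^{p,q}}$, both with constants independent of $\delta$.

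\textbf{Step 1 (the term $b_{\gamma(\lambda)}$).} For $p<\infty$, I use relative separation: each fiber $\gamma^{-1}(\gamma)\subset\Lambda\cap B(\gamma,1)$ has at most $N_\Lambda(1)$ points, a bound independent of $\delta$. Each $\lambda\in\Lambda_k$ has $\gamma(\lambda)$ in $A_{k-1}\cup A_k\cup A_{k+1}$. So the $\ell^p$-norm over $\Lambda_k$ is at most $N_\Lambda(1)^{1/p}$ times the $\ell^p$-norm of $b$ over the three neighbouring shells of $\Gamma_\delta$. Summing in $\ell^q$ over $k$ with the finite overlap of windows gives the bound, exactly as in Proposition~\ref{prop:upper-restriction}. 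For $p=\infty$ multiplicity is irrelevant, since $\sup_{\lambda\in\Lambda_k}|b_{\gamma(\lambda)}|\le\max_{|j-k|\le1}\sup_{\Gamma_{\delta,j}}|b|$, so no separation hypothesis is needed.

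\textbf{Step 2 (the error term).} For $p<\infty$, the same fiber bound reduces matters to the shells of $\Gamma_\delta$. The quantity $\sum_{\gamma\in\Gamma_{\delta,j}}\int_{B(\gamma,2)}|f|^pe^{-p\alpha|\zeta|^2/2}$ is controlled by the overlap of the disks $B(\gamma,2)$, $\gamma\in\Gamma_\delta$. This is the main obstacle: a $\delta$-separated set has overlap of order $\delta^{-2}$, which would destroy the $\delta$-uniformity. To avoid this I use $\Gamma_\delta\subset\Lambda$ and relative separation of $\Lambda$, which bound the overlap by $N_\Lambda(2)$ independently of $\delta$. The disks then meet boundedly many annuli, and polar coordinates give the estimate $\lesssim\sum_{|i-j|\le3}(1+i)B_p(f;i)^p$. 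Combining this with the weight $(1+j)^{\nu p}$ and $\nu p+1=p/q$, and arguing as in \eqref{eq:upper-restriction-block} together with the $q\le p$ and $q>p$ case split and (F2), bounds the error sequence by $\|f\|_{\mathcal F_\alpha^{p,q}}$; the case $q=\infty$ is handled similarly. For $p=\infty$, $\sup_{B(\gamma,2)}|f|e^{-\alpha|\zeta|^2/2}\le\max_{|i-k|\le3}B_\infty(f;i)$, and the weight $(1+k)^{1/q}$ against (F2) again closes the estimate, with no overlap issue.

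Raising to the power $s$ and applying the $s$-triangle inequality yields \eqref{eq:carrier-compression}. The implicit constants depend only on $\alpha,p,q$ and on $N_\Lambda(2)$, which is fixed by $\Lambda$ and not by $\delta$.
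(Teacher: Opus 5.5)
Your proposal is correct and follows the paper's proof essentially step by step. Both arguments use the pointwise comparison from Lemma~\ref{lem:local-oscillation} at the carrier point $\gamma(\lambda)$. Both control the reassignment $\lambda\mapsto\gamma(\lambda)$ through the $\delta$-independent fiber bound $N_\Lambda(1)$ and a finite shell window. Both rest on the key observation that the overlap of the disks $B(\gamma,2)$, $\gamma\in\Gamma_\delta$, is bounded by $N_\Lambda(2)$ rather than by $\delta^{-2}$, because $\Gamma_\delta\subset\Lambda$.
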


\begin{proof}
For simplicity, set
\[
\nu:=\nu(p,q)=\frac1q-\frac1p.
\]
We first record the effect of the assignment
\eqref{eq:carrier-assignment}. For every
$b=(b_\gamma)_{\gamma\in\Gamma_\delta}$,
\begin{equation}
\label{eq:carrier-assignment-norm}
\left\|
\bigl(b_{\gamma(\lambda)}\bigr)_{\lambda\in\Lambda}
\right\|_{\ell^{p,q}(\Lambda)}
\lesssim_{p,q}
\|b\|_{\ell^{p,q}(\Gamma_\delta)},
\end{equation}
with an implicit constant independent of $\delta$.
Indeed, if $0<p<\infty$ and $\gamma(\lambda)=\gamma$, then
$\lambda\in\Lambda\cap B(\gamma,1)$, and hence
\[
\#\{\lambda\in\Lambda:\gamma(\lambda)=\gamma\}
\le N_\Lambda(1).
\]
Moreover, $\lambda\in A_k$ implies
$\gamma(\lambda)\in A_j$ for some $|j-k|\le1$. Therefore
\[
\left\|
\bigl(b_{\gamma(\lambda)}\bigr)_{\lambda\in\Lambda_k}
\right\|_{\ell^p}^p
=
\sum_{\lambda\in\Lambda_k}|b_{\gamma(\lambda)}|^p\le
N_\Lambda(1)
\sum_{\substack{j\ge0\\ |j-k|\le1}}
\sum_{\gamma\in\Gamma_\delta\cap A_j}|b_\gamma|^p.
\]
The same finite-window argument used in the proof of
Proposition~\ref{prop:upper-restriction}, with the window
$|j-k|\le1$, gives \eqref{eq:carrier-assignment-norm}. If
$p=\infty$, then
\[
\left\|
\bigl(b_{\gamma(\lambda)}\bigr)_{\lambda\in\Lambda_k}
\right\|_{\ell^\infty}
\le
\max_{\substack{j\ge0\\ |j-k|\le1}}
\|b|_{\Gamma_\delta\cap A_j}\|_{\ell^\infty},
\]
and the corresponding finite-window estimate gives the same
conclusion.

\smallskip
\noindent
We now apply Lemma~\ref{lem:local-oscillation} with
\(
w=\gamma(\lambda)\) and \(z=\lambda\).
For $0<p<\infty$, set
\[
Q_\gamma(f)
:=
(1+|\gamma|)^\nu
\left(
\int_{B(\gamma,2)}
|f(\zeta)|^p
e^{-\frac{p\alpha}{2}|\zeta|^2}\,dA(\zeta)
\right)^{\frac{1}{p}},
\]
whereas for $p=\infty$ set
\[
Q_\gamma(f)
:=
(1+|\gamma|)^\nu
\sup_{\zeta\in B(\gamma,2)}
|f(\zeta)|e^{-\frac{\alpha}{2}|\zeta|^2}.
\]
The local oscillation
estimate gives
\[
|(R_{\alpha,\Lambda}^{p,q}f)_\lambda|
\lesssim_{\alpha,p,q}
|(R_{\alpha,\Gamma_\delta}^{p,q}f)_{\gamma(\lambda)}|
+
\delta Q_{\gamma(\lambda)}(f).
\]
By solidity, the powered triangle inequality, and
\eqref{eq:carrier-assignment-norm},
\begin{equation} \label{eq:compression-before-error}
\|R_{\alpha,\Lambda}^{p,q}f\|_{\ell^{p,q}(\Lambda)}^s
\lesssim
\|R_{\alpha,\Gamma_\delta}^{p,q}f\|_
{\ell^{p,q}(\Gamma_\delta)}^s
+
\delta^s
\|Q(f)\|_{\ell^{p,q}(\Gamma_\delta)}^s.
\end{equation}
It remains to prove
\begin{equation}
\label{eq:carrier-error-bound}
\|Q(f)\|_{\ell^{p,q}(\Gamma_\delta)}
\lesssim
\|f\|_{\mathcal F_\alpha^{p,q}},
\end{equation}
with a constant independent of $\delta$.

\smallskip
\noindent
Assume first that $0<p<\infty$.
Since $\Gamma_\delta\subset\Lambda$,
\[
\sup_{\zeta\in\mathbb C}
\sum_{\gamma\in\Gamma_\delta}
\mathbf 1_{B(\gamma,2)}(\zeta)
\le N_\Lambda(2),
\]
so the overlap constant is independent of $\delta$. Using also
$1+|\gamma|\asymp1+k$ for $\gamma\in A_k$, we obtain
\[
\|Q(f)|_{\Gamma_\delta\cap A_k}\|_{\ell^p}^p
\lesssim
(1+k)^{\nu p}
\sum_{\substack{j\ge0\\ |j-k|\le3}}
\int_{A_j}
|f(\zeta)|^p
e^{-\frac{p\alpha}{2}|\zeta|^2}\,dA(\zeta).
\]
As in the proof of Proposition~\ref{prop:upper-restriction},
\[
\int_{A_j}
|f(\zeta)|^p
e^{-\frac{p\alpha}{2}|\zeta|^2}\,dA(\zeta)
\lesssim
(1+j)B_p(f;j)^p.
\]
Since
\(
\nu p+1=\frac pq,
\)
it follows that
\[
\|Q(f)|_{\Gamma_\delta\cap A_k}\|_{\ell^p}^p
\lesssim
\sum_{\substack{j\ge0\\ |j-k|\le3}}
\left[(1+j)^{\frac{1}{q}}B_p(f;j)\right]^p.
\]
The same finite-window argument used in the proof of
Proposition~\ref{prop:upper-restriction}, with the window
$|j-k|\le3$ in place of $|j-k|\le2$, followed by {\rm(F2)}, gives \eqref{eq:carrier-error-bound}.

\smallskip
\noindent
If $p=\infty$ and $\gamma\in A_k$, then
\[
Q_\gamma(f)
\lesssim
(1+k)^{\frac{1}{q}}
\max_{\substack{j\ge0\\ |j-k|\le3}}
B_\infty(f;j).
\]
The same finite-window estimate, followed by {\rm(F2)}, gives
\eqref{eq:carrier-error-bound}.

\smallskip
\noindent
Combining \eqref{eq:compression-before-error} and
\eqref{eq:carrier-error-bound} proves
\eqref{eq:carrier-compression}.
\end{proof}

\begin{proposition}
\label{prop:separated-sampling-carrier}
Let $0<p,q\le\infty$. If $\Lambda$ is sampling for
$\mathcal F_\alpha^{p,q}$, then there exist $0<\delta<1$ and a
$\delta$-separated subset $\Gamma_\delta\subset\Lambda$ such that
\begin{equation}
\label{eq:carrier-lower-sampling}
\|f\|_{\mathcal F_\alpha^{p,q}}
\lesssim_{\alpha,p,q}
\|R_{\alpha,\Gamma_\delta}^{p,q}f\|_
{\ell^{p,q}(\Gamma_\delta)},
\qquad
f\in\mathcal F_\alpha^{p,q}.
\end{equation}
\end{proposition}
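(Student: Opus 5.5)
The plan is to combine the sampling hypothesis with the carrier compression estimate of Proposition~\ref{prop:carrier-compression} and absorb the error term by choosing $\delta$ small. The hypothesis of Proposition~\ref{prop:carrier-compression} requires relative separation when $p<\infty$. This is supplied by Proposition~\ref{prop:local-geometric-consequences}(a), since the upper sampling inequality makes $R_{\alpha,\Lambda}^{p,q}$ bounded. When $p=\infty$ no such hypothesis is needed, so the compression estimate is available in all cases.

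Let $C_1$ be the lower sampling constant, so $C_1\|f\|_{\mathcal F_\alpha^{p,q}}\le\|R_{\alpha,\Lambda}^{p,q}f\|_{\ell^{p,q}(\Lambda)}$, and set $s=\min\{1,p,q\}$. For any $0<\delta<1$ and any maximal $\delta$-separated subset $\Gamma_\delta\subset\Lambda$ (obtained by the greedy construction preceding Proposition~\ref{prop:carrier-compression}), raising the lower sampling inequality to the power $s$ and inserting \eqref{eq:carrier-compression} gives
\[
C_1^s\|f\|_{\mathcal F_\alpha^{p,q}}^s
\le
K\Bigl(
\|R_{\alpha,\Gamma_\delta}^{p,q}f\|_{\ell^{p,q}(\Gamma_\delta)}^s
+\delta^s\|f\|_{\mathcal F_\alpha^{p,q}}^s
\Bigr),
\]
where $K$ depends only on $\alpha,p,q$ and the relative separation data of $\Lambda$. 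The key point is that $K$ does not depend on $\delta$, which is exactly the uniformity asserted in Proposition~\ref{prop:carrier-compression}.

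Now fix $\delta\in(0,1)$ so small that $K\delta^s\le\frac12 C_1^s$. Since $\|f\|_{\mathcal F_\alpha^{p,q}}<\infty$, the term $K\delta^s\|f\|^s$ can be absorbed into the left-hand side, giving $\tfrac12C_1^s\|f\|^s\le K\|R_{\alpha,\Gamma_\delta}^{p,q}f\|^s$. Taking $s$-th roots yields \eqref{eq:carrier-lower-sampling}. The resulting $\Gamma_\delta$ is $\delta$-separated by construction.

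There is essentially no obstacle beyond what Proposition~\ref{prop:carrier-compression} already provides. The only points needing care are the following. First, one must invoke Proposition~\ref{prop:local-geometric-consequences}(a) to secure relative separation for $p<\infty$. Second, one must check that $K$ is genuinely independent of $\delta$. This holds because the overlap constants $N_\Lambda(1)$ and $N_\Lambda(2)$ used in that proof depend only on $\Lambda$, not on $\Gamma_\delta$.
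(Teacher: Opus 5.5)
Your proposal is correct and follows the paper's proof essentially verbatim: you obtain relative separation for $p<\infty$ from Proposition~\ref{prop:local-geometric-consequences}(a), apply the $\delta$-uniform compression estimate of Proposition~\ref{prop:carrier-compression}, and absorb the $\delta^s$ term by taking $\delta$ small. Your remark that the constant also depends on the overlap data $N_\Lambda(1),N_\Lambda(2)$ and on the lower sampling constant of $\Lambda$ is accurate; the paper's $\lesssim_{\alpha,p,q}$ notation leaves this dependence implicit.
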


\begin{proof}
Let $A>0$ be a lower sampling constant, so that
\[
A\|f\|_{\mathcal F_\alpha^{p,q}}
\le
\|R_{\alpha,\Lambda}^{p,q}f\|_{\ell^{p,q}(\Lambda)},
\qquad
f\in\mathcal F_\alpha^{p,q}.
\]
If $p<\infty$, the upper sampling inequality and
Proposition~\ref{prop:local-geometric-consequences} imply that
$\Lambda$ is relatively separated. Hence
Proposition~\ref{prop:carrier-compression} applies.

\smallskip
\noindent
Set
\[
s:=\min\{1,p,q\},
\]
and let $\Gamma_\delta\subset\Lambda$ be a maximal
$\delta$-separated subset. By
Proposition~\ref{prop:carrier-compression}, there is a constant
$C>0$, independent of $\delta$, such that
\[
A^s\|f\|_{\mathcal F_\alpha^{p,q}}^s
\le
C\|R_{\alpha,\Gamma_\delta}^{p,q}f\|_
{\ell^{p,q}(\Gamma_\delta)}^s
+
C\delta^s\|f\|_{\mathcal F_\alpha^{p,q}}^s.
\]
Choose $0<\delta<1$ sufficiently small that
\[
C\delta^s\le\frac{A^s}{2}.
\]
Absorbing the last term into the left-hand side gives
\eqref{eq:carrier-lower-sampling}.
\end{proof}

\subsection{Reconstruction from Hilbert sampling}
\label{subsec:hilbert-reconstruction-transfer}

We conclude this section by extending the canonical reconstruction
associated with a Hilbert sampling set to the full mixed-norm scale.
The argument combines the Hilbert frame expansion from
Subsection~\ref{SS:HilbertFock}, the normalized localized synthesis
estimate from Subsection~\ref{subsec:localized-synthesis}, and the upper
restriction estimate proved above.

\begin{proposition}
\label{prop:hilbert-reconstruction-transfer}
Let $0<p,q\le\infty$, and let $\Gamma\subset\mathbb C$ be separated
and sampling for $\mathcal F_\alpha^2$. Let
$(\varphi_\gamma)_{\gamma\in\Gamma}$ be its canonical Hilbert dual
family from Proposition~\ref{prop:localized-hilbert-atoms}. Then
\begin{equation}
\label{eq:mixed-hilbert-reconstruction}
f
=
S_\Gamma^{\varphi,p,q}
R_{\alpha,\Gamma}^{p,q}f
=
\sum_{\gamma\in\Gamma}
f(\gamma)e^{-\frac{\alpha}{2}|\gamma|^2}\varphi_\gamma,
\qquad
f\in\mathcal F_\alpha^{p,q}.
\end{equation}
Consequently,
\begin{equation}
\label{eq:hilbert-reconstruction-lower}
\|f\|_{\mathcal F_\alpha^{p,q}}
\lesssim_{\alpha,p,q}
\|R_{\alpha,\Gamma}^{p,q}f\|_{\ell^{p,q}(\Gamma)}.
\end{equation}
\end{proposition}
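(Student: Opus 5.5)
The plan is to prove the identity first on a dense class, or on the Hilbert space itself, and then extend it by continuity or by a local-uniform-convergence argument. The consequence \eqref{eq:hilbert-reconstruction-lower} then follows immediately from Corollary~\ref{cor:normalized-localized-synthesis}.

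First I would rewrite the composition. Since $(1+|\gamma|)^{\frac1p-\frac1q}$ cancels the normalization in $R_{\alpha,\Gamma}^{p,q}$, the operator $S_\Gamma^{\varphi,p,q}R_{\alpha,\Gamma}^{p,q}f$ is exactly $\sum_\gamma f(\gamma)e^{-\frac\alpha2|\gamma|^2}\varphi_\gamma$. By Proposition~\ref{prop:localized-hilbert-atoms}(i) the family $(\varphi_\gamma)$ is rapidly Fock localized. By Proposition~\ref{prop:upper-restriction}, with $\Gamma$ separated, $R_{\alpha,\Gamma}^{p,q}$ is bounded $\mathcal F_\alpha^{p,q}\to\ell^{p,q}(\Gamma)$. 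Hence $T:=S_\Gamma^{\varphi,p,q}R_{\alpha,\Gamma}^{p,q}$ is a bounded operator $\mathcal F_\alpha^{p,q}\to\mathcal F_\alpha^{p,q}$, and its series converges absolutely and locally uniformly (Theorem~\ref{thm:weighted-localized-synthesis}). The task is to show $Tf=f$.

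Next comes the key step, pointwise reconstruction on a large class. For $f\in\mathcal F_\alpha^2$, \eqref{eq:hilbert-frame-reconstruction} gives $f=\sum_\gamma f(\gamma)e^{-\frac\alpha2|\gamma|^2}\varphi_\gamma$ in $\mathcal F_\alpha^2$, hence pointwise. So $Tf=f$ for $f\in\mathcal F_\alpha^{p,q}\cap\mathcal F_\alpha^2$. To reach general $f$ I would use dilations. For $0<\tau<1$ and $f\in\mathcal F_\alpha^{p,q}$, the point estimate \eqref{eq:intro-point-evaluation} gives $|f(\tau z)|\lesssim(1+|z|)^{C}e^{\frac{\alpha\tau^2}{2}|z|^2}$. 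Thus $D_\tau f$ lies in every $\mathcal F_\alpha^{p',q'}$, in particular in $\mathcal F_\alpha^2\cap\mathcal F_\alpha^{p,q}$, and so $T D_\tau f=D_\tau f$.

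For $q<\infty$, (F4) gives $D_\tau f\to f$ in $\mathcal F_\alpha^{p,q}$, and boundedness of $T$ on the quasi-Banach space finishes the argument. For $q=\infty$ norm convergence fails, so I would argue pointwise instead. The sample sequences satisfy $(D_\tau f)(\gamma)e^{-\frac\alpha2|\gamma|^2}\to f(\gamma)e^{-\frac\alpha2|\gamma|^2}$ for each $\gamma$, with the uniform domination $|(D_\tau f)(\gamma)|e^{-\frac\alpha2|\gamma|^2}\lesssim(1+|\gamma|)^{C}\|f\|$. Here $C$ is independent of $\tau$, because $e^{\frac\alpha2(\tau^2-1)|\gamma|^2}\le1$ and the point estimate is applied at $\tau\gamma$. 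The polynomial growth is beaten by the rapid decay $\sup_{z\in K}|\varphi_\gamma(z)|\lesssim_N(1+|\gamma|)^{-N}$ on compacts $K$, and separated counting \eqref{eq:separated-shell-counting} makes the sum converge. Dominated convergence then gives $T D_\tau f(z)\to Tf(z)$, while $D_\tau f(z)\to f(z)$, so $Tf=f$. The same domination argument also handles $q<\infty$, which avoids needing (F4) at all; I would use it uniformly in all cases.

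Finally, \eqref{eq:hilbert-reconstruction-lower} follows from $\|f\|=\|S_\Gamma^{\varphi,p,q}R_{\alpha,\Gamma}^{p,q}f\|\lesssim\|R_{\alpha,\Gamma}^{p,q}f\|_{\ell^{p,q}(\Gamma)}$ by \eqref{eq:normalized-localized-synthesis}. The main obstacle is the extension step. One must check that $D_\tau f\in\mathcal F_\alpha^2$ and that the domination of the sampled values is uniform in $\tau$. Both rest only on the polynomial point-evaluation bound, so I expect this step to be routine once it is organized as above.
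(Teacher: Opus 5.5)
Your proposal is correct and follows essentially the paper's proof: dilate $f$ into $\mathcal F_\alpha^2$, apply the Hilbert frame expansion \eqref{eq:hilbert-frame-reconstruction} to $D_\tau f$, and pass to the limit. For $q<\infty$ the limit uses (F4) and the boundedness of $S_\Gamma^{\varphi,p,q}R_{\alpha,\Gamma}^{p,q}$, and for $q=\infty$ it uses a dominated, locally uniform limit of the synthesis series. The only difference is how you dominate the samples. You bound them directly by the point-evaluation estimate, uniformly in $\tau$, whereas the paper uses a uniform $\ell^{p,\infty}$ bound on the sample sequences; your version works for every $q$ and makes (F4) unnecessary.
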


\begin{proof}
By Proposition~\ref{prop:localized-hilbert-atoms},
$(\varphi_\gamma)_{\gamma\in\Gamma}$ is rapidly Fock localized.
Since $\Gamma$ is separated,
Proposition~\ref{prop:upper-restriction} gives
\[
R_{\alpha,\Gamma}^{p,q}f\in\ell^{p,q}(\Gamma),
\]
and Corollary~\ref{cor:normalized-localized-synthesis} shows that
\[
S_\Gamma^{\varphi,p,q}
R_{\alpha,\Gamma}^{p,q}f
\in\mathcal F_\alpha^{p,q}.
\]
Its defining series is precisely the series in
\eqref{eq:mixed-hilbert-reconstruction}. It remains to identify its
sum with $f$.

\smallskip
\noindent
For $0<\tau<1$, set
\[
f_\tau:=D_\tau f,
\qquad
f_\tau(z)=f(\tau z).
\]
Then $f_\tau\to f$ locally uniformly as $\tau\uparrow1$, and
$f_\tau\in\mathcal F_\alpha^2$. Indeed, the point-evaluation estimate
\eqref{eq:intro-point-evaluation} gives
\[
|f_\tau(z)|^2e^{-\alpha|z|^2}
\lesssim_{\alpha,p,q}
(1+|z|)^{2\left|\frac1p-\frac1q\right|}
e^{-\alpha(1-\tau^2)|z|^2}
\|f\|_{\mathcal F_\alpha^{p,q}}^2,
\]
which is integrable for every fixed $\tau<1$.

\smallskip
\noindent
By Proposition~\ref{prop:localized-hilbert-atoms}(i),
\begin{equation}
\label{eq:dilated-hilbert-reconstruction}
f_\tau
=
\sum_{\gamma\in\Gamma}
f_\tau(\gamma)e^{-\frac{\alpha}{2}|\gamma|^2}\varphi_\gamma .
\end{equation}
The series converges in $\mathcal F_\alpha^2$, hence locally
uniformly.

\smallskip
\noindent
Suppose first that $0<q<\infty$. 
By {\rm(F4)} and Proposition~\ref{prop:upper-restriction},
\[
R_{\alpha,\Gamma}^{p,q}f_\tau
\longrightarrow
R_{\alpha,\Gamma}^{p,q}f
\qquad\text{in }\ell^{p,q}(\Gamma).
\]
Since $S_\Gamma^{\varphi,p,q}$ is bounded,
\[
S_\Gamma^{\varphi,p,q}R_{\alpha,\Gamma}^{p,q}f_\tau
\longrightarrow
S_\Gamma^{\varphi,p,q}R_{\alpha,\Gamma}^{p,q}f
\]
in $\mathcal F_\alpha^{p,q}$. The left-hand side equals $f_\tau$ by
\eqref{eq:dilated-hilbert-reconstruction}, while $f_\tau\to f$ in
$\mathcal F_\alpha^{p,q}$. Hence
\[
f=S_\Gamma^{\varphi,p,q}R_{\alpha,\Gamma}^{p,q}f,
\]
which proves \eqref{eq:mixed-hilbert-reconstruction}.

\smallskip
\noindent
Suppose now that $q=\infty$. For $t\ge0$,
\[
M_p(f_\tau,t)e^{-\frac{\alpha}{2}t^2}
=
M_p(f,\tau t)e^{-\frac{\alpha}{2}\tau^2 t^2}
e^{-\frac{\alpha}{2}(1-\tau^2)t^2} \le
\|f\|_{\mathcal F_\alpha^{p,\infty}},
\]
and therefore
\begin{equation}
\label{eq:dilation-uniform-endpoint}
\sup_{0<\tau<1}
\|f_\tau\|_{\mathcal F_\alpha^{p,\infty}}
\le
\|f\|_{\mathcal F_\alpha^{p,\infty}}.
\end{equation}
Choose $\tau_j\uparrow1$ and set
\[
a^{(j)}
:=
R_{\alpha,\Gamma}^{p,\infty}f_{\tau_j},
\qquad
a
:=
R_{\alpha,\Gamma}^{p,\infty}f.
\]
By Proposition~\ref{prop:upper-restriction} and
\eqref{eq:dilation-uniform-endpoint},
\[
\sup_j
\|a^{(j)}\|_{\ell^{p,\infty}(\Gamma)}
<\infty,
\]
while local uniform convergence of $f_{\tau_j}$ gives
\[
a_\gamma^{(j)}\longrightarrow a_\gamma,
\qquad
\gamma\in\Gamma.
\]
We claim that
\begin{equation}
\label{eq:coordinatewise-synthesis-limit}
S_\Gamma^{\varphi,p,\infty}a^{(j)}
\longrightarrow
S_\Gamma^{\varphi,p,\infty}a
\end{equation}
locally uniformly. Indeed, if $K\subset\mathbb C$ is compact, rapid
localization and separated shell counting give, for every $N>0$,
\[
\sup_j\sup_{z\in K}
\sum_{\gamma\in\Gamma_m}
|a_\gamma^{(j)}|
(1+|\gamma|)^{\frac{1}{p}}
|\varphi_\gamma(z)|
\lesssim_{K,N}
(1+m)^{-N+\frac1p+\frac1{p^*}}
\sup_j\|a^{(j)}\|_{\ell^{p,\infty}(\Gamma)};
\]
the same estimate
holds for $a$. Choosing $N$ sufficiently large makes the right-hand
side summable in $m$. Thus the tails are uniformly small on $K$,
while on finitely many shells coordinatewise convergence is
finite-dimensional. This proves
\eqref{eq:coordinatewise-synthesis-limit}.

\smallskip
\noindent
By \eqref{eq:dilated-hilbert-reconstruction}, the left-hand side of
\eqref{eq:coordinatewise-synthesis-limit} equals $f_{\tau_j}$.
Since $f_{\tau_j}\to f$ locally uniformly, we obtain
\[
f=S_\Gamma^{\varphi,p,\infty}a,
\]
which proves \eqref{eq:mixed-hilbert-reconstruction} for
$q=\infty$.

\smallskip
\noindent
Finally, the boundedness of the normalized synthesis operator gives
\[
\|f\|_{\mathcal F_\alpha^{p,q}}
=
\|S_\Gamma^{\varphi,p,q}
R_{\alpha,\Gamma}^{p,q}f\|_{\mathcal F_\alpha^{p,q}}
\lesssim
\|R_{\alpha,\Gamma}^{p,q}f\|_{\ell^{p,q}(\Gamma)},
\]
and \eqref{eq:hilbert-reconstruction-lower} follows.
\end{proof}

\section{Sufficiency}
\label{sec:sufficiency}

We now prove the sufficiency directions of the main theorems.
Interpolation follows by applying the localized synthesis theorem to
the Hilbert Lagrange family, while sampling follows from the mixed
reconstruction formula associated with a separated Hilbert sampling
carrier.

\subsection{Interpolation sufficiency}
\label{subsec:interpolation-sufficiency}

We begin with interpolation.

\begin{theorem}
\label{thm:interpolation-sufficiency}
Let $0<p,q\le\infty$, and let $\Lambda\subset\mathbb C$ be separated.
If
\[
D^+(\Lambda)<\frac{\alpha}{\pi},
\]
then the normalized restriction map
\(
R_{\alpha,\Lambda}^{p,q}:
\mathcal F_\alpha^{p,q}
\longrightarrow
\ell^{p,q}(\Lambda)
\)
admits the synthesis operator
\(
S_\Lambda^{\psi,p,q}:
\ell^{p,q}(\Lambda)
\longrightarrow
\mathcal F_\alpha^{p,q}
\)
as a bounded linear right inverse, where
$(\psi_\lambda)_{\lambda\in\Lambda}$ is the Hilbert Lagrange family
from Proposition~\ref{prop:localized-hilbert-atoms}.

\smallskip
\noindent
If $q=\infty$, then
\(
S_\Lambda^{\psi,p,\infty}:
\ell_0^{p,\infty}(\Lambda)
\longrightarrow
f_\alpha^{p,\infty}
\)
is also a bounded linear right inverse of
$R_{\alpha,\Lambda}^{p,\infty}$ on $\ell_0^{p,\infty}(\Lambda)$.
\end{theorem}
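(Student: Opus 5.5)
The plan is to combine the classical Hilbert interpolation theorem with the localized synthesis machinery of Section~\ref{sec:structural-inputs}.

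First, since $\Lambda$ is separated and $D^+(\Lambda)<\frac{\alpha}{\pi}$, Theorem~\ref{thm:classical-hilbert-density}(ii) shows that $\Lambda$ is interpolating for $\mathcal F_\alpha^2$. Proposition~\ref{prop:localized-hilbert-atoms}(ii) therefore supplies a Lagrange family $(\psi_\lambda)_{\lambda\in\Lambda}$. This family satisfies \eqref{eq:lagrange-property}, and by \eqref{eq:lagrange-atom-localization} it is rapidly Fock localized in the sense of Definition~\ref{def:rapid-fock-localization}. Corollary~\ref{cor:normalized-localized-synthesis} then shows that
\[
S_\Lambda^{\psi,p,q}a=\sum_{\lambda}a_\lambda(1+|\lambda|)^{\frac1p-\frac1q}\psi_\lambda
\]
is bounded from $\ell^{p,q}(\Lambda)$ to $\mathcal F_\alpha^{p,q}$. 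When $q=\infty$, the same corollary shows it is bounded from $\ell_0^{p,\infty}(\Lambda)$ to $f_\alpha^{p,\infty}$. It is linear by construction.

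Second, I must verify that $R_{\alpha,\Lambda}^{p,q}S_\Lambda^{\psi,p,q}a=a$. By Theorem~\ref{thm:weighted-localized-synthesis}, the series defining $F:=S_\Lambda^{\psi,p,q}a$ converges absolutely and locally uniformly. Evaluation at a point $\mu\in\Lambda$ may therefore be taken termwise. This gives
\[
F(\mu)e^{-\frac{\alpha}{2}|\mu|^2}=\sum_\lambda a_\lambda(1+|\lambda|)^{\frac1p-\frac1q}\delta_{\lambda,\mu}=a_\mu(1+|\mu|)^{\frac1p-\frac1q}.
\]
Multiplying by $(1+|\mu|)^{\frac1q-\frac1p}$ yields $(R_{\alpha,\Lambda}^{p,q}F)_\mu=a_\mu$. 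So the synthesis operator is a right inverse, and surjectivity follows.

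Third, I need boundedness of $R_{\alpha,\Lambda}^{p,q}$ itself, since interpolation was defined as bounded and onto. For this I invoke Proposition~\ref{prop:upper-restriction}; separated sets are relatively separated, so it applies. It also shows that $R$ maps $f_\alpha^{p,\infty}$ into $\ell_0^{p,\infty}(\Lambda)$, which gives the little endpoint statement.

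I do not expect a genuine obstacle, because the heavy lifting is already in Theorem~\ref{thm:weighted-localized-synthesis} and Proposition~\ref{prop:localized-hilbert-atoms}. The only delicate point is justifying termwise evaluation in the quasi-Banach and $q=\infty$ cases, where the series need not converge in norm. This is handled by the pointwise absolute convergence proved in the well-definedness part of Theorem~\ref{thm:weighted-localized-synthesis}, rather than by any norm convergence.
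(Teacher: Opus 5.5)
Your proposal is correct and follows the paper's proof essentially step for step: Hilbert interpolation via Theorem~\ref{thm:classical-hilbert-density}, the rapidly localized Lagrange family from Proposition~\ref{prop:localized-hilbert-atoms}(ii), boundedness of the synthesis operator (including $\ell_0^{p,\infty}(\Lambda)\to f_\alpha^{p,\infty}$) via Corollary~\ref{cor:normalized-localized-synthesis}, the Lagrange identity for the right-inverse property, and Proposition~\ref{prop:upper-restriction} for boundedness of the restriction maps. Your explicit appeal to the absolute locally uniform convergence from Theorem~\ref{thm:weighted-localized-synthesis} to justify termwise evaluation is a detail the paper leaves implicit, and it is the right justification.
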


\begin{proof}

Since $\Lambda$ is separated, hence relatively separated,
Proposition~\ref{prop:upper-restriction} gives the required
boundedness of the restriction maps, including the little endpoint.

\smallskip
\noindent
By Theorem~\ref{thm:classical-hilbert-density}, the density assumption
implies that $\Lambda$ is interpolating for $\mathcal F_\alpha^2$.
Hence Proposition~\ref{prop:localized-hilbert-atoms}(ii) provides a
rapidly Fock-localized Lagrange family
$(\psi_\lambda)_{\lambda\in\Lambda}$ satisfying
\[
\psi_\lambda(\mu)e^{-\frac{\alpha}{2}|\mu|^2}
=
\delta_{\lambda,\mu},
\qquad
\lambda,\mu\in\Lambda.
\]
By Corollary~\ref{cor:normalized-localized-synthesis},
\(
S_\Lambda^{\psi,p,q}:
\ell^{p,q}(\Lambda)
\longrightarrow
\mathcal F_\alpha^{p,q}
\)
is bounded. Moreover, for $a\in\ell^{p,q}(\Lambda)$ and
$\mu\in\Lambda$,
\[
(R_{\alpha,\Lambda}^{p,q}
S_\Lambda^{\psi,p,q}a)_\mu
=
(S_\Lambda^{\psi,p,q}a)(\mu)
e^{-\frac{\alpha}{2}|\mu|^2}
(1+|\mu|)^{\frac1q-\frac1p}  =a_\mu,
\]
by the Lagrange identity. Thus $S_\Lambda^{\psi,p,q}$ is a bounded
linear right inverse of $R_{\alpha,\Lambda}^{p,q}$.

\smallskip
\noindent
If $q=\infty$, Corollary~\ref{cor:normalized-localized-synthesis}
also gives
\(
S_\Lambda^{\psi,p,\infty}:
\ell_0^{p,\infty}(\Lambda)
\longrightarrow
f_\alpha^{p,\infty}
\)
bounded. The same Lagrange identity shows that it is a right inverse
of $R_{\alpha,\Lambda}^{p,\infty}$ on
$\ell_0^{p,\infty}(\Lambda)$.
\end{proof}

\begin{remark}
\label{rem:uniform-interpolation}
The conclusion of Theorem~\ref{thm:interpolation-sufficiency} is
stronger than the usual consequence of surjectivity. Indeed, if
\[
R_{\alpha,\Lambda}^{p,q}:
\mathcal F_\alpha^{p,q}\longrightarrow\ell^{p,q}(\Lambda)
\]
is merely bounded and onto, then the quasi-Banach open mapping theorem
yields a constant $C>0$ such that, for every
$a\in\ell^{p,q}(\Lambda)$, one can choose
$f\in\mathcal F_\alpha^{p,q}$ satisfying
\[
R_{\alpha,\Lambda}^{p,q}f=a,
\qquad
\|f\|_{\mathcal F_\alpha^{p,q}}
\le C\|a\|_{\ell^{p,q}(\Lambda)}.
\]
This does not, in general, provide a linear choice of the preimages.
Under the density hypothesis of Theorem~\ref{thm:interpolation-sufficiency},
the Hilbert Lagrange family instead gives the explicit bounded linear
choice
\[
f=S_\Lambda^{\psi,p,q}a.
\]
The same observation applies to
\[
R_{\alpha,\Lambda}^{p,\infty}:
f_\alpha^{p,\infty}\longrightarrow\ell_0^{p,\infty}(\Lambda).
\]
\end{remark}

\subsection{Sampling sufficiency}
\label{subsec:sampling-sufficiency}

We now prove sampling sufficiency. A separated carrier above the
critical density is Hilbert sampling, and Proposition~\ref{prop:hilbert-reconstruction-transfer}
transfers its reconstruction to the mixed-norm scale.

\begin{theorem}
\label{thm:sampling-sufficiency}
Let $0<p,q\le\infty$, and let $\Lambda\subset\mathbb C$ be locally
finite. If $0<p<\infty$, assume in addition that $\Lambda$ is
relatively separated. If
\[
D_{\rm sep}^-(\Lambda)>\frac{\alpha}{\pi},
\]
then $\Lambda$ is sampling for $\mathcal F_\alpha^{p,q}$.
\end{theorem}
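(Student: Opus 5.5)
The plan is to prove the upper and lower sampling inequalities separately. Each one reduces to results already established.

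\emph{Upper inequality.} If $0<p<\infty$, $\Lambda$ is relatively separated by hypothesis. If $p=\infty$, no hypothesis is needed. In both cases Proposition~\ref{prop:upper-restriction} gives
\[
\|R_{\alpha,\Lambda}^{p,q}f\|_{\ell^{p,q}(\Lambda)}\lesssim\|f\|_{\mathcal F_\alpha^{p,q}}.
\]

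\emph{Lower inequality.} By the definition of $D^-_{\mathrm{sep}}$ as a supremum, the assumption $D^-_{\mathrm{sep}}(\Lambda)>\frac{\alpha}{\pi}$ yields a separated subset $\Gamma\subset\Lambda$ with $D^-(\Gamma)>\frac{\alpha}{\pi}$. Theorem~\ref{thm:classical-hilbert-density}(i) then shows that $\Gamma$ is sampling for $\mathcal F_\alpha^2$. Proposition~\ref{prop:hilbert-reconstruction-transfer} transfers this to the mixed scale:
\[
\|f\|_{\mathcal F_\alpha^{p,q}}\lesssim\|R_{\alpha,\Gamma}^{p,q}f\|_{\ell^{p,q}(\Gamma)}.
\]
Since $\Gamma\subset\Lambda$, the shell blocks satisfy $\Gamma_k\subset\Lambda_k$. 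The restriction $R_{\alpha,\Gamma}^{p,q}f$ is the restriction of $R_{\alpha,\Lambda}^{p,q}f$ to $\Gamma$, because the normalizing weight $(1+|\lambda|)^{1/q-1/p}$ is pointwise. The annular quasi-norm is solid and monotone under passing to a subset of indices, since each inner $\ell^p(\Gamma_k)$ block norm is dominated by the $\ell^p(\Lambda_k)$ block norm. Hence
\[
\|R_{\alpha,\Gamma}^{p,q}f\|_{\ell^{p,q}(\Gamma)}\le\|R_{\alpha,\Lambda}^{p,q}f\|_{\ell^{p,q}(\Lambda)},
\]
and combining this with the transferred inequality gives the lower sampling bound.

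This also covers the case $p=\infty$ with clustered $\Lambda$, because only the separated subset $\Gamma$ enters the lower bound. I do not expect a genuine obstacle here: the substantive work was done in Proposition~\ref{prop:hilbert-reconstruction-transfer}. The only point to check is the monotonicity of the quasi-norm, which is immediate from the definition.
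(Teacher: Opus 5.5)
Your proposal is correct and follows the paper's proof essentially step by step. The upper bound comes from Proposition~\ref{prop:upper-restriction}, and the lower bound comes from a separated subset $\Gamma\subset\Lambda$ with $D^-(\Gamma)>\frac{\alpha}{\pi}$, via the classical Hilbert density theorem, Proposition~\ref{prop:hilbert-reconstruction-transfer}, and solidity of the annular quasi-norm under passage to $\Gamma\subset\Lambda$.
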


\begin{proof}
By the definition of $D_{\rm sep}^-(\Lambda)$, there is a separated
set $\Gamma\subset\Lambda$ such that
\[
D^-(\Gamma)>\frac{\alpha}{\pi}.
\]
By Theorem~\ref{thm:classical-hilbert-density}, $\Gamma$ is sampling for
$\mathcal F_\alpha^2$. Proposition~\ref{prop:hilbert-reconstruction-transfer}
therefore gives
\[
\|f\|_{\mathcal F_\alpha^{p,q}}
\lesssim_{\alpha,p,q}
\|R_{\alpha,\Gamma}^{p,q}f\|_{\ell^{p,q}(\Gamma)}
\le
\|R_{\alpha,\Lambda}^{p,q}f\|_{\ell^{p,q}(\Lambda)},
\]
where the last inequality follows from solidity. Thus $\Lambda$
satisfies the lower sampling inequality.

\smallskip
\noindent
The upper sampling inequality follows from
Proposition~\ref{prop:upper-restriction}; for $p<\infty$ this uses
the assumed relative separation of $\Lambda$, while for $p=\infty$
local finiteness suffices. Hence $\Lambda$ is sampling for
$\mathcal F_\alpha^{p,q}$.
\end{proof}

\section{Localized lower-stability transfer}
\label{sec:localized-stability}

We prove the discrete matrix transfer principle used in both
necessity arguments. Apart from the weighted annular sequence-space
notation introduced in Subsection~\ref{SS:AdditionalNotation}, the argument is independent
of the Fock-space structure developed in Sections~\ref{sec:structural-inputs} and \ref{sec:local-geometry}.
The main step is to pass directly from lower stability on a weighted
annular mixed sequence space to lower stability on $\ell^\infty$;
the latter is then transferred to $\ell^2$ by the $p$-independence
theorem for the Sjöstrand class in \cite[Theorem~2.1]{ShinSun2009}.

\smallskip
\noindent
Throughout this section, $Y\subset\mathbb C$ is relatively separated
and
\[
X=X^{(1)}\sqcup\cdots\sqcup X^{(J)}
\]
is a finite labeled union of relatively separated subsets of
$\mathbb C$. Distances between labeled points refer to their planar
coordinates. For $m,n\in\mathbb N_0$, set
\[
Y_m:=Y\cap A_m,
\qquad
X_n:=X\cap A_n,
\]
where the intersection with $X$ retains the labels.

\smallskip
\noindent
For $\nu\in\mathbb R$, the weighted annular mixed sequence space
$\ell_{w_\nu}^{p,q}(X)$ is defined as in Subsection~\ref{SS:AdditionalNotation}, with the inner
$\ell^p$-norm taken over the labeled block $X_n$.

\subsection{Localized matrices on weighted annular mixed sequence spaces}
\label{subsec:localized-matrices-weighted}
We first record the boundedness consequences of rapid off-diagonal
decay on weighted annular mixed sequence spaces.

\begin{definition}[Rapid matrix localization]
\label{def:rapid-matrix-localization}
A matrix
\(
T=(T_{x,y})_{x\in X,\;y\in Y}
\)
is called \emph{rapidly localized} if, for every $N>0$, there exists
$C_N>0$ such that
\[
|T_{x,y}|
\le
C_N(1+|x-y|)^{-N},
\qquad x\in X,\ y\in Y.
\]
\end{definition}

\begin{lemma}
\label{lem:shell-block-estimate}
Let $T:Y\to X$ be rapidly localized. Then, for every
$0<p\le\infty$ and every $L>0$,
\begin{equation}
\label{eq:shell-block-estimate}
\left\|
\left(
\sum_{y\in Y_m}T_{x,y}c_y
\right)_{x\in X_n}
\right\|_{\ell^p(X_n)}
\lesssim_{p,L}
(1+|n-m|)^{-L}
\|c\|_{\ell^p(Y_m)}
\end{equation}
for all $m,n\in\mathbb N_0$ and every
$c=(c_y)_{y\in Y_m}$.
\end{lemma}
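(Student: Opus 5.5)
The plan is to treat the block operator $T_{nm}:\ell^p(Y_m)\to\ell^p(X_n)$, $(T_{nm}c)_x=\sum_{y\in Y_m}T_{x,y}c_y$, by separating the two sources of decay. First, for $x\in X_n$ and $y\in Y_m$ the annular geometry gives $|x-y|\ge\bigl||x|-|y|\bigr|\ge |n-m|-1$. Hence $1+|x-y|\gtrsim 1+|n-m|$, and for any $N>0$ we can split
\[
|T_{x,y}|\le C_{N+L}(1+|x-y|)^{-N-L}\lesssim (1+|n-m|)^{-L}(1+|x-y|)^{-N}.
\]
The $\ell^p$-estimate then reduces to showing that a matrix $K_{x,y}=(1+|x-y|)^{-N}$ restricted to $X_n\times Y_m$ is bounded $\ell^p\to\ell^p$ with a constant independent of $n,m$, provided $N$ is large.

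For the uniform bound I would use a relatively separated analogue of \eqref{eq:separated-decay-sum}. Since $X$ is a finite union of relatively separated sets and $Y$ is relatively separated, a unit-disk covering gives
\[
\#\{y\in Y: j\le|z-y|<j+1\}\lesssim 1+j \quad\text{uniformly in } z\in\mathbb C,
\]
and likewise for $X$. Consequently, for $N>2$,
\[
\sup_{x}\sum_{y\in Y}(1+|x-y|)^{-N}<\infty, \qquad \sup_{y}\sum_{x\in X}(1+|x-y|)^{-N}<\infty.
\]

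The exponent range then splits into two cases.

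For $1\le p\le\infty$, Schur's test gives $\ell^p$-boundedness directly from the two uniform row and column sums. Equivalently, one can interpolate between the $\ell^1$ bound, which uses the column sums, and the $\ell^\infty$ bound, which uses the row sums, as in the proof of Lemma~\ref{lem:one-shell-synthesis}.

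For $0<p<1$, I would use $p$-subadditivity:
\[
\Bigl|\sum_y T_{x,y}c_y\Bigr|^p\le\sum_y|T_{x,y}|^p|c_y|^p.
\]
Summing over $x\in X_n$ and applying the column bound to the kernel $(1+|x-y|)^{-Np}$ requires $Np>2$. This yields the factor $(1+|n-m|)^{-Lp}$, and taking $p$-th roots finishes this case.

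The only genuinely delicate point is bookkeeping rather than analysis. The constant must not depend on $n,m$, which the uniform counting guarantees. One must also choose $N$ depending on $p$, namely $N>2/\min\{1,p\}$, and invoke the hypothesis with $N+L$ in place of $N$. The case $|n-m|\le1$ needs no decay gain and is covered by the same estimate with the factor $(1+|n-m|)^{-L}\asymp1$.
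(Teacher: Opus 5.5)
Your proposal is correct and follows essentially the same route as the paper: on $X_n\times Y_m$ you use $1+|x-y|\gtrsim 1+|n-m|$ to split off the factor $(1+|n-m|)^{-L}$ from the rapid decay. You then apply Schur's test with uniform relative-separation row and column sums for $1\le p\le\infty$, and $p$-subadditivity with a column bound requiring $Np>2$ for $0<p<1$. The only difference is bookkeeping: you invoke the hypothesis at order $N+L$ and keep the kernel $(1+|x-y|)^{-N}$, whereas the paper takes order $N>L+2$ (resp.\ $N>L+2/p$) and keeps $(1+|x-y|)^{-N+L}$.
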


\begin{proof}
If $x\in X_n$ and $y\in Y_m$, then
\[
1+|n-m|\lesssim 1+|x-y|.
\]
Suppose first that $1\le p\le\infty$. Choose the localization order
$N>L+2$. Then
\[
|T_{x,y}|
\lesssim_{N,L}
(1+|n-m|)^{-L}
(1+|x-y|)^{-N+L}.
\]
Since $Y$ is relatively separated and $X$ is a finite labeled union
of relatively separated configurations,
\[
\sup_{x\in X}\sum_{y\in Y}(1+|x-y|)^{-N+L}
+
\sup_{y\in Y}\sum_{x\in X}(1+|x-y|)^{-N+L}
<\infty.
\]
The Schur test therefore gives
\eqref{eq:shell-block-estimate}.

\smallskip
\noindent
If $0<p<1$, then $p$-subadditivity gives
\[
\begin{aligned}
\left\|
\left(
\sum_{y\in Y_m}T_{x,y}c_y
\right)_{x\in X_n}
\right\|_{\ell^p(X_n)}^p
&\le
\sum_{y\in Y_m}|c_y|^p
\sum_{x\in X_n}|T_{x,y}|^p.
\end{aligned}
\]
Choose the localization order $N>L+\frac{2}{p}$. As above,
\[
|T_{x,y}|^p
\lesssim_L
(1+|n-m|)^{-Lp}
(1+|x-y|)^{-(N-L)p},
\]
and \((N-L)p>2\). Relative-separation counting on the finite labeled
union $X$ therefore gives
\[
\sup_{y\in Y}
\sum_{x\in X}
(1+|x-y|)^{-(N-L)p}
<\infty.
\]
Hence
\[
\left\|
\left(
\sum_{y\in Y_m}T_{x,y}c_y
\right)_{x\in X_n}
\right\|_{\ell^p(X_n)}^p
\lesssim_{p,L}
(1+|n-m|)^{-Lp}
\|c\|_{\ell^p(Y_m)}^p,
\]
which proves the claim.
\end{proof}

\begin{lemma}
\label{lem:localized-matrix-boundedness}
Let $T:Y\to X$ be rapidly localized. Then, for every
$0<p,q\le\infty$ and every $\nu\in\mathbb R$, the matrix $T$
defines a bounded operator
\begin{equation*}
\label{eq:localized-weighted-boundedness}
T:
\ell_{w_\nu}^{p,q}(Y)
\longrightarrow
\ell_{w_\nu}^{p,q}(X).
\end{equation*}
It also defines a bounded operator
\begin{equation}
\label{eq:localized-infty-boundedness}
T:\ell^\infty(Y)\longrightarrow\ell^\infty(X),
\end{equation}
and the series defining each $(Tc)_x$ is absolutely convergent for
$c\in\ell^\infty(Y)$.
\end{lemma}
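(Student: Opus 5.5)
The plan is to split $c\in\ell_{w_\nu}^{p,q}(Y)$ into its shell pieces $c^{(m)}:=c|_{Y_m}$, estimate each shell-to-shell block with Lemma~\ref{lem:shell-block-estimate}, and then sum over $m$. This mirrors the proof of Theorem~\ref{thm:weighted-localized-synthesis}.

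\emph{Absolute convergence.} We first check that the series defining $(Tc)_x$ converges absolutely, so that $Tc$ is well defined before any norm estimate. For $x\in X_n$ and $y\in Y_m$, the definition of rapid localization gives $|T_{x,y}|\lesssim_N (1+|x-y|)^{-N}$. Relative separation of $Y$ gives $\#Y_m\lesssim 1+m$. Moreover, every coordinate satisfies $|c_y|\le \|c|_{Y_m}\|_{\ell^p}\le (1+m)^{-\nu}\|c\|_{\ell_{w_\nu}^{p,q}}$. Since $|x-y|\ge |m-n|-1$, we obtain
\[
\sum_{y\in Y_m}|T_{x,y}c_y|\lesssim_{N,n}(1+m)^{1-N-\nu}\|c\|,
\]
which is summable in $m$ once $N$ is large. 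For $c\in\ell^\infty(Y)$ the same bound holds without the weight, using $\sup_x\sum_y(1+|x-y|)^{-N}<\infty$ for $N>2$; this is the relative-separation counting already used in Lemma~\ref{lem:shell-block-estimate}. It also gives \eqref{eq:localized-infty-boundedness} immediately, by the Schur-type bound $\|Tc\|_\infty\le\sup_x\sum_y|T_{x,y}|\,\|c\|_\infty$.

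\emph{Weighted estimate.} Put $s:=\min\{1,p,q\}$, $c_m:=\|c|_{Y_m}\|_{\ell^p}$, and $b_n:=\|(Tc)|_{X_n}\|_{\ell^p(X_n)}$. The $s$-triangle inequality in $\ell^p(X_n)$ (valid since $s\le\min\{1,p\}$), together with Lemma~\ref{lem:shell-block-estimate}, gives
\[
b_n^s\lesssim \sum_m (1+|n-m|)^{-Ls}c_m^s .
\]
To pass from finite sums to the full series, I will apply this to finite truncations and take the coordinatewise limit, which the absolute convergence above permits; the Fatou property of the $\ell^p$ quasi-norm handles the limit.

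Next multiply by $(1+n)^{\nu s}$ and use
\[
\left(\frac{1+n}{1+m}\right)^{\nu}\le (1+|n-m|)^{|\nu|}.
\]
Choosing $L=|\nu|+2/s$, this yields
\[
\bigl((1+n)^\nu b_n\bigr)^s\lesssim\sum_m(1+|n-m|)^{-2}\bigl((1+m)^\nu c_m\bigr)^s.
\]
Young's inequality on $\ell^{q/s}(\mathbb Z)$, which applies because $q/s\ge1$ (including $q=\infty$), then gives
\[
\|Tc\|_{\ell_{w_\nu}^{p,q}(X)}\lesssim\|c\|_{\ell_{w_\nu}^{p,q}(Y)}.
\]

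\emph{Expected obstacle.} The only delicate point is the quasi-Banach bookkeeping: we must use the $s$-triangle inequality with $s\le p$ inside blocks and $s\le q$ across blocks, and justify passing to the limit of infinitely many shells. Apart from that, the argument is a direct reprise of the synthesis theorem, with Lemma~\ref{lem:shell-block-estimate} playing the role of Lemma~\ref{lem:one-shell-synthesis}.
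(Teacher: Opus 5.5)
Your proposal is correct and follows essentially the same route as the paper. The paper also uses the shell-block estimate with the powered triangle inequality, the weight-ratio bound $(1+|n-m|)^{|\nu|}$, Young's inequality on $\ell^{q/s}$, a truncation argument, and the Schur bound for $\ell^\infty$. The one minor difference is that you justify the limit through coordinatewise absolute convergence together with finiteness of $X_n$, whereas the paper uses blockwise convergence in $\ell^p(X_n)$ obtained from the shell-block lemma; both are valid.
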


\begin{proof}
Set
\[
s:=\min\{1,p,q\}.
\]
Suppose first that $c$ is supported in finitely many annular blocks.
By Lemma~\ref{lem:shell-block-estimate} and the powered triangle
inequality, for every $L>0$,
\begin{equation*}
\label{eq:shell-block-convolution}
\|(Tc)|_{X_n}\|_{\ell^p(X_n)}^s
\lesssim_{p,L}
\sum_{m=0}^\infty
(1+|n-m|)^{-Ls}\|c|_{Y_m}\|_{\ell^p(Y_m)}^s.
\end{equation*}
Indeed, if $p\ge1$, one first uses the triangle inequality in
$\ell^p(X_n)$ and then $s$-subadditivity of scalar sums. If
$0<p<1$, one first uses $p$-subadditivity and then raises the
resulting estimate to the power $\frac{s}{p}\le1$.

\smallskip
\noindent
Since
\[
\frac{w_\nu(n)}{w_\nu(m)} = \frac{(1 + n)^{\nu}}{(1 + m)^{\nu}}
\le (1+|n-m|)^{|\nu|},
\]
we obtain
\[
w_\nu(n)^s\|(Tc)|_{X_n}\|_{\ell^p(X_n)}^s
\lesssim_L
\sum_{m=0}^\infty
(1+|n-m|)^{-(L-|\nu|)s}w_\nu(m)^s\|c|_{Y_m}\|_{\ell^p(Y_m)}^s.
\]
Choosing $L>|\nu|+\frac{1}{s}$ and applying Young's inequality on
$\ell^{\frac{q}{s}}$, with the usual supremum interpretation when $q=\infty$,
gives
\[
\|Tc\|_{\ell_{w_\nu}^{p,q}(X)}
\lesssim_{p,q,\nu}
\|c\|_{\ell_{w_\nu}^{p,q}(Y)}.
\]

\smallskip
\noindent
Now let $c\in\ell_{w_\nu}^{p,q}(Y)$ be arbitrary, and set
\[
c^{[M]}
:=
c\,\mathbf 1_{Y_0\cup\cdots\cup Y_M}.
\]
Since
\[
(1+m)^\nu
\|c|_{Y_m}\|_{\ell^p(Y_m)}
\le
\|c\|_{\ell_{w_\nu}^{p,q}(Y)},
\]
the block norms $\|c|_{Y_m}\|_{\ell^p(Y_m)}$ have at most
polynomial growth. Hence, by
Lemma~\ref{lem:shell-block-estimate} with sufficiently large $L$,
for each fixed $n$ the series
\[
\sum_{m=0}^\infty
\left(
\sum_{y\in Y_m}T_{x,y}c_y
\right)_{x\in X_n}
\]
converges in $\ell^p(X_n)$; its sum defines $(Tc)|_{X_n}$.
Consequently,
\[
(Tc^{[M]})|_{X_n}
\longrightarrow
(Tc)|_{X_n}
\qquad\text{in }\ell^p(X_n)
\]
for every fixed $n$.

\smallskip
\noindent
The estimate already proved for $c^{[M]}$ is uniform in $M$.
Passing to the blockwise limit and using Fatou's lemma when
$q<\infty$, and the supremum when $q=\infty$, gives
\[
\|Tc\|_{\ell_{w_\nu}^{p,q}(X)}
\lesssim
\|c\|_{\ell_{w_\nu}^{p,q}(Y)}.
\]
Thus no density of finitely supported sequences is required when
$q=\infty$.

\smallskip
\noindent
Finally, choosing $N>2$ in the rapid localization estimate and using
relative-separation counting gives
\[
\sup_{x\in X}
\sum_{y\in Y}|T_{x,y}|<\infty.
\]
Therefore, for every $c\in\ell^\infty(Y)$,
\[
|(Tc)_x|
\le
\|c\|_{\ell^\infty(Y)}
\sum_{y\in Y}|T_{x,y}|,
\]
uniformly in $x\in X$. This proves
\eqref{eq:localized-infty-boundedness} and the absolute convergence
of the defining row sums.
\end{proof}

\subsection{Translated cutoffs and lower-stability transfer to $\ell^\infty$}
\label{subsec:cutoffs-infty-stability}

We next localize bounded sequences near arbitrary centers. The
translated cutoff estimates compensate for the lack of translation
invariance of the annular weights and, together with a commutator
estimate, transfer weighted mixed lower stability to $\ell^\infty$.

\smallskip
\noindent
For $M>0$, $R\ge1$, and $w\in\mathbb C$, define
\[
\theta_{R,w}(\xi)
:=
\left(1+\frac{|\xi-w|}{R}\right)^{-M}.
\]
If $Z$ is a configuration, let $\Theta_{R,w}^Z$ denote
multiplication by $\theta_{R,w}$ on sequences indexed by $Z$.

\begin{lemma}
\label{lem:translated-cutoff-estimates}
Let $Z\subset\mathbb C$ be relatively separated, or a finite labeled
union of relatively separated configurations. Let
$0<p,q\le\infty$, $\nu\in\mathbb R$, and suppose that
\begin{equation}
\label{eq:cutoff-order}
M>\frac2p+|\nu|+\frac1q.
\end{equation}
Then, for every $b\in\ell^\infty(Z)$,
\begin{equation}
\label{eq:translated-cutoff-upper}
\|\Theta_{R,w}^Zb\|_{\ell_{w_\nu}^{p,q}(Z)}
\lesssim_{p,q,\nu,M}
R^{\frac2p+|\nu|+\frac1q}
(1+|w|)^\nu
\|b\|_{\ell^\infty(Z)}
\end{equation}
uniformly in $w\in\mathbb C$ and $R\ge1$.

\smallskip
\noindent
If $q=\infty$, then
\begin{equation}
\label{eq:translated-cutoff-little}
\Theta_{R,w}^Zb
\in
\ell_{w_\nu,0}^{p,\infty}(Z).
\end{equation}
Consequently, in all cases the finite annular truncations of
$\Theta_{R,w}^Zb$ converge to it in
$\ell_{w_\nu}^{p,q}(Z)$.

\smallskip
\noindent
Moreover, if $b\ne0$ and $\xi_0\in Z$ satisfies
\[
|b_{\xi_0}|
\ge
\frac12\|b\|_{\ell^\infty(Z)},
\]
then, for every $R\ge1$,
\begin{equation}
\label{eq:lower-cutoff-bound}
\|\Theta_{R,\xi_0}^Zb\|_{\ell_{w_\nu}^{p,q}(Z)}
\gtrsim_\nu
(1+|\xi_0|)^\nu
\|b\|_{\ell^\infty(Z)}.
\end{equation}
\end{lemma}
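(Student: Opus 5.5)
The plan is to estimate the weighted annular norm shell by shell, reducing everything to counting points of $Z$ in unit annuli against the decay of $\theta_{R,w}$. Fix $w$, $R\ge1$, and write $Z_n=Z\cap A_n$. Since $Z$ is a finite labeled union of relatively separated sets, any unit disk contains $O(1)$ points of $Z$.

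\textbf{Block estimate.} For the upper bound \eqref{eq:translated-cutoff-upper}, I start from
\[
\|(\Theta_{R,w}^Zb)|_{Z_n}\|_{\ell^p}\le \|b\|_{\ell^\infty}\Bigl(\sum_{\xi\in Z_n}\theta_{R,w}(\xi)^p\Bigr)^{1/p}.
\]
Two inputs control this sum. First, $\#Z_n\lesssim 1+n$. Second, the points of $Z_n$ within distance $t$ of $w$ number $\lesssim (1+t)^2$, uniformly in $n$. For every $\xi\in A_n$ we have $|\xi-w|\ge \bigl||w|-n\bigr|-1$, hence
\[
\theta_{R,w}(\xi)\lesssim \Bigl(1+\tfrac{|n-|w||}{R}\Bigr)^{-M/2}\Bigl(1+\tfrac{|\xi-w|}{R}\Bigr)^{-M/2}.
\]
Summing the second factor over $\xi\in Z$ by dyadic shells in $|\xi-w|$ gives $\lesssim R^2$, since $Mp/2>2$ follows from \eqref{eq:cutoff-order}. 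The block norm is therefore at most $R^{2/p}(1+|n-|w||/R)^{-M/2}\|b\|_\infty$. If the resulting split of exponents is too lossy for the stated power of $R$, I will instead split $M$ unevenly between the two factors, putting just over $2/p$ on the second.

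\textbf{Summation over shells.} Next I multiply by $(1+n)^\nu$ and use the Peetre-type bound
\[
(1+n)^\nu\le (1+|w|)^\nu(1+|n-|w||)^{|\nu|}\le (1+|w|)^\nu R^{|\nu|}\Bigl(1+\tfrac{|n-|w||}{R}\Bigr)^{|\nu|}.
\]
It then remains to take the $\ell^q$ norm over $n$ of $(1+|n-|w||/R)^{-(M'-|\nu|)}$, where $M'$ is the exponent left for radial decay. This is $\lesssim R^{1/q}$ provided $(M'-|\nu|)q>1$. Collecting the factors gives $R^{2/p+|\nu|+1/q}$. The main obstacle is the bookkeeping: $M$ must be split so that $M_1p>2$ and $M_2-|\nu|>1/q$ with $M_1+M_2=M$, which is exactly \eqref{eq:cutoff-order}. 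I will avoid the crude factor $2$ by writing $\theta_{R,w}(\xi)^{M_1/M}\cdot\theta_{R,w}(\xi)^{M_2/M}$ and bounding each factor by the relevant distance, so no constant is lost in the exponent.

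\textbf{Little-space membership and truncations.} For $q=\infty$ the weighted block norms are bounded by $(1+|n-|w||/R)^{-(M_2-|\nu|)}\to0$ as $n\to\infty$, which gives \eqref{eq:translated-cutoff-little}. For $q<\infty$ the $\ell^q$ tail of a convergent sum tends to zero. In both cases the finite annular truncations therefore converge in $\ell_{w_\nu}^{p,q}(Z)$.

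\textbf{Lower bound.} For \eqref{eq:lower-cutoff-bound}, let $\xi_0\in Z_k$. The norm dominates the single entry $w_\nu(k)\,\theta_{R,\xi_0}(\xi_0)\,|b_{\xi_0}|=w_\nu(k)|b_{\xi_0}|\ge\frac12 (1+k)^\nu\|b\|_\infty$. Finally, $(1+k)^\nu\asymp_\nu(1+|\xi_0|)^\nu$, which finishes the argument.
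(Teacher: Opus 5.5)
Your proposal is correct and follows the paper's argument: a shell-by-shell bound of size $R^{2/p}$ times radial decay in $|k-|w||/R$, the Peetre-type bound $(1+k)^\nu\le(1+|w|)^\nu R^{|\nu|}(1+|k-|w||/R)^{|\nu|}$, summation over shells giving $R^{1/q}$, and the single-entry lower bound. The only difference is how the block sum is handled. The paper gets the decay exponent $M-\frac2p$ in one step from the tail estimate $\sum_{|\xi-w|\ge d}(1+|\xi-w|/R)^{-\beta}\lesssim R^2(1+d/R)^{-\beta+2}$, whereas your uneven split $\theta^{M_1/M}\theta^{M_2/M}$ gives any exponent $M_2<M-\frac2p$, which suffices because \eqref{eq:cutoff-order} is strict. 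Note that your initial even split does not work, since $Mp/2>2$ does not follow from \eqref{eq:cutoff-order}; the uneven split is genuinely needed.
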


\begin{proof}
We first record the counting estimate used below. If $\beta>2$,
relative separation gives, uniformly in $w\in\mathbb C$,
\[
\#\{\xi\in Z:j\le|\xi-w|<j+1\}
\lesssim 1+j.
\]
Hence, by comparison with the corresponding integral,
\begin{equation}
\label{eq:centered-distance-tail}
\sum_{\substack{\xi\in Z\\|\xi-w|\ge d}}
\left(1+\frac{|\xi-w|}{R}\right)^{-\beta}
\lesssim_{\beta}
R^2
\left(1+\frac dR\right)^{-\beta+2},
\end{equation}
uniformly in $w\in\mathbb C$, $d\ge0$, and $R\ge1$. For a finite labeled union, only
the counting constant changes.

\smallskip
\noindent
Put
\[
\qquad
d_k:=\operatorname{dist}(|w|,[k,k+1]).
\]
If $\xi\in Z\cap A_k$, then $|\xi-w|\ge d_k$. Suppose first that
$0<p<\infty$. Since \eqref{eq:cutoff-order} implies $Mp>2$,
\eqref{eq:centered-distance-tail} gives
\[
\|\Theta_{R,w}^Zb|_{Z\cap A_k}\|_{\ell^p}^p
\le
\|b\|_{\ell^\infty}^p
\sum_{\substack{\xi\in Z\\|\xi-w|\ge d_k}}
\left(1+\frac{|\xi-w|}{R}\right)^{-Mp} \lesssim
R^2
\left(1+\frac{d_k}{R}\right)^{-Mp+2}
\|b\|_{\ell^\infty}^p.
\]
For $p=\infty$,
\[
\|\Theta_{R,w}^Zb|_{Z\cap A_k}\|_{\ell^\infty}
\le
\left(1+\frac{d_k}{R}\right)^{-M}
\|b\|_{\ell^\infty}.
\]
Since $R\ge1$,
\[
1+\frac{|k-|w||}{R}
\lesssim
1+\frac{d_k}{R}
\qquad \text{and} \qquad
(1+k)^\nu
\le
(1+|w|)^\nu
(1+|k-|w||)^{|\nu|}.
\]
Hence, with the convention $\frac{2}{p}=0$ when $p=\infty$,
\begin{equation}
\label{eq:cutoff-block-estimate}
w_\nu(k)
\|\Theta_{R,w}^Zb|_{Z\cap A_k}\|_{\ell^p}
\lesssim_{p,\nu,M}
R^{\frac{2}{p}+|\nu|}
(1+|w|)^\nu
\left(1+\frac{|k-|w||}{R}\right)^{-\delta}
\|b\|_{\ell^\infty},
\end{equation}
where
\[
\delta:=M-\frac{2}{p}-|\nu|>\frac1q.
\]

\smallskip
\noindent
If $q<\infty$, then
\[
\sup_{|w|\ge0}
\sum_{k=0}^\infty
\left(1+\frac{|k-|w||}{R}\right)^{-\delta q}
\lesssim_{\delta,q}R.
\]
Taking the outer $\ell^q$-norm in
\eqref{eq:cutoff-block-estimate} proves
\eqref{eq:translated-cutoff-upper}. If $q=\infty$, taking the
supremum over $k$ gives the same estimate, while $\delta>0$ implies
that the right-hand side of \eqref{eq:cutoff-block-estimate} tends
to zero as $k\to\infty$. This proves
\eqref{eq:translated-cutoff-little}. The assertion concerning finite
annular truncations follows immediately.

\smallskip
\noindent
Finally, let $\xi_0\in A_{k_0}$ satisfy the near-maximality condition.
Since
\(
\theta_{R,\xi_0}(\xi_0)=1,
\)
the $k_0$-th block gives
\[
\|\Theta_{R,\xi_0}^Zb\|_{\ell_{w_\nu}^{p,q}(Z)}
\ge
(1+k_0)^\nu |b_{\xi_0}|.
\]
Moreover, 
\[
(1+k_0)^\nu\asymp_\nu(1+|\xi_0|)^\nu,
\]
hence \eqref{eq:lower-cutoff-bound} follows.
\end{proof}

\begin{lemma}
\label{lem:translated-commutator}
Let $T:Y\to X$ be rapidly localized. Let $0<p,q\le\infty$,
$\nu\in\mathbb R$, and suppose that $M$ satisfies
\eqref{eq:cutoff-order}. Then
\begin{equation}
\label{eq:translated-commutator}
\bigl\|
T\Theta_{R,w}^Yb-\Theta_{R,w}^XTb
\bigr\|_{\ell_{w_\nu}^{p,q}(X)}
\lesssim_{p,q,\nu,M}
\frac1R
\|\Theta_{R,w}^Yb\|_{\ell_{w_\nu}^{p,q}(Y)}
\end{equation}
for every $b\in\ell^\infty(Y)$, uniformly in
$w\in\mathbb C$ and $R\ge1$.
\end{lemma}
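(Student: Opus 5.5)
We write the commutator entrywise. For $x\in X$,
\[
\bigl(T\Theta_{R,w}^Yb-\Theta_{R,w}^XTb\bigr)_x
=
\sum_{y\in Y}T_{x,y}\bigl(\theta_{R,w}(y)-\theta_{R,w}(x)\bigr)b_y .
\]
The series converges absolutely by Lemma~\ref{lem:localized-matrix-boundedness}, because $b\in\ell^\infty(Y)$ and $\theta_{R,w}\le1$. The strategy is to factor the difference as
\[
\theta_{R,w}(y)-\theta_{R,w}(x)
=
\theta_{R,w}(y)\,\rho_{R,w}(x,y),
\qquad
\rho_{R,w}(x,y):=1-\frac{\theta_{R,w}(x)}{\theta_{R,w}(y)}.
\]
Then the commutator is $\widetilde T\,\Theta_{R,w}^Yb$, where $\widetilde T_{x,y}:=T_{x,y}\rho_{R,w}(x,y)$. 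It therefore suffices to prove that $\widetilde T$ equals $\frac1R$ times a rapidly localized matrix, with localization constants uniform in $w$ and $R\ge1$. Once this is done, Lemma~\ref{lem:localized-matrix-boundedness} applied to $R\widetilde T$ on $\ell_{w_\nu}^{p,q}$ gives \eqref{eq:translated-commutator} with a uniform constant. The constants produced by Lemma~\ref{lem:localized-matrix-boundedness} depend only on finitely many $C_N$, so uniformity of the $C_N$ suffices. Lemma~\ref{lem:translated-cutoff-estimates} guarantees that $\Theta_{R,w}^Yb\in\ell_{w_\nu}^{p,q}(Y)$, so the right-hand side is finite. That lemma also shows that finite annular truncations converge. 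We need this convergence to identify $\widetilde T\Theta_{R,w}^Yb$ with the absolutely convergent row sums. In fact, the blockwise construction in the proof of Lemma~\ref{lem:localized-matrix-boundedness} already gives the same row sums, since the $\ell^\infty$ row sums converge absolutely.

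The key pointwise estimate is for $\rho_{R,w}$. Write $u=1+|x-w|/R$ and $v=1+|y-w|/R$, so that $\theta_{R,w}(x)/\theta_{R,w}(y)=(v/u)^M$. By the triangle inequality, $|u-v|\le |x-y|/R$. Consequently,
\[
\frac{v}{u}\le 1+\frac{|x-y|}{R}\le 1+|x-y|,
\qquad
\frac{u}{v}\le 1+|x-y|.
\]
By the mean value theorem applied to $t\mapsto t^M$ between $1$ and $v/u$,
\[
|\rho_{R,w}(x,y)|
\le M\,\Bigl|\frac vu-1\Bigr|\,\max\Bigl\{1,\frac vu\Bigr\}^{M-1}
\le M\,\frac{|x-y|}{R}\,(1+|x-y|)^{M-1}.
\]
Here we also used $|v/u-1|=|v-u|/u\le |x-y|/R$, since $u\ge1$. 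It follows that
\[
|\rho_{R,w}(x,y)|\le \frac{M}{R}(1+|x-y|)^{M}.
\]
The bound is uniform in $w$ and in $R\ge1$.

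Combining this with the rapid localization of $T$ gives
\[
|R\,\widetilde T_{x,y}|\le M\,C_{N+M}(1+|x-y|)^{-N}
\]
for every $N$. Thus $R\widetilde T$ is rapidly localized with constants independent of $(R,w)$, and the estimate follows as described.

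The main obstacle is the factorization step. The naive bound $|\theta(x)-\theta(y)|\lesssim |x-y|/R$ is not enough, because the right-hand side of \eqref{eq:translated-commutator} is measured against $\Theta^Y_{R,w}b$ and not against $b$. The difference therefore has to be written as $\theta(y)$ times a factor that grows only polynomially in $|x-y|$. This is exactly why the ratio $\theta(x)/\theta(y)$ is controlled by $(1+|x-y|)^M$, and that polynomial growth is absorbed by the rapid decay of $T$. Nothing in the argument depends on $p,q$, or $\nu$ beyond Lemma~\ref{lem:localized-matrix-boundedness}. The condition \eqref{eq:cutoff-order} is used only to ensure finiteness and truncation control through Lemma~\ref{lem:translated-cutoff-estimates}.
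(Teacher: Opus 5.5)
Your argument is essentially the paper's: both reduce to the pointwise bound $|\theta_{R,w}(y)-\theta_{R,w}(x)|\lesssim_M R^{-1}(1+|x-y|)^{K}\,\theta_{R,w}(y)$, absorb the polynomial factor into the rapid decay of $T$, and then invoke Lemma~\ref{lem:localized-matrix-boundedness}. The paper dominates the commutator by the fixed matrix $(1+|x-y|)^{M+1}|T_{x,y}|$ acting on $(\theta_{R,w}(y)|b_y|)_y$ and uses solidity, which avoids your uniformity-in-$(R,w)$ remark.

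There is one small slip. Your mean-value bound $\xi^{M-1}\le\max\{1,v/u\}^{M-1}$, with $\xi$ the intermediate point between $1$ and $v/u$, holds only for $M\ge1$. However, \eqref{eq:cutoff-order} permits $M<1$ (e.g.\ $p=q=\infty$, $\nu=0$). In that range your conclusion $|\rho_{R,w}|\le\frac MR(1+|x-y|)^M$ actually fails: take $M=\tfrac12$, $R=1$, $w=x$, $|x-y|=8$, which gives $|\rho_{R,w}|=2>\tfrac32$. For $M<1$, use $\xi^{M-1}\le\max\{1,u/v\}^{1-M}\le(1+|x-y|)^{1-M}$ instead, relying on the bound $u/v\le1+|x-y|$ that you already recorded. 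This gives $|\rho_{R,w}(x,y)|\le\frac MR(1+|x-y|)^{1+|M-1|}$ for all $M>0$, and the rest of your argument goes through unchanged.
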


\begin{proof}
By Lemma~\ref{lem:localized-matrix-boundedness}, $Tb$ is defined by
absolutely convergent row sums. Hence, for $x\in X$,
\[
\bigl(
T\Theta_{R,w}^Yb-\Theta_{R,w}^XTb
\bigr)_x
=
\sum_{y\in Y}
T_{x,y}
\bigl(\theta_{R,w}(y)-\theta_{R,w}(x)\bigr)b_y.
\]

\smallskip
\noindent
We first estimate the cutoff difference.
The mean-value theorem and $||x-w|-|y-w||\le |x-y|$ give
\[
|\theta_{R,w}(y)-\theta_{R,w}(x)|
\le
\frac{M|x-y|}{R}
\left(
1+\frac{\min\{|x-w|, |y - w|\}}{R}
\right)^{-M-1}.
\]
If the minimum is $|y -w|$, the last factor is bounded by
$\theta_{R,w}(y)$. If the minimum is $|x-w|$, then
\[
1+\frac{|y-w|}{R}
\le
\left(1+\frac{|x-w|}{R}\right)
\left(1+\frac{|x-y|}{R}\right),
\]
and therefore
\[
\left(1+\frac{|x-w|}{R}\right)^{-M-1}
\le
\left(1+\frac{|x-y|}{R}\right)^M
\theta_{R,w}(y).
\]
Since $R\ge1$,
\[
|\theta_{R,w}(y)-\theta_{R,w}(x)|
\lesssim_M
\frac1R
(1+|x-y|)^{M+1}\theta_{R,w}(y).
\]

\smallskip
\noindent
It follows that
\[
\left|
\bigl(
T\Theta_{R,w}^Yb-\Theta_{R,w}^XTb
\bigr)_x
\right|
\lesssim
\frac1R
\sum_{y\in Y}
(1+|x-y|)^{M+1}|T_{x,y}|\theta_{R,w}(y)|b_y|.
\]
Since $T$ is rapidly localized, the matrix with entries
\(
(1+|x-y|)^{M+1}|T_{x,y}|
\)
is still rapidly localized. Moreover,
Lemma~\ref{lem:translated-cutoff-estimates} gives
\[
\bigl(\theta_{R,w}(y)|b_y|\bigr)_{y\in Y}
\in
\ell_{w_\nu}^{p,q}(Y).
\]
Hence Lemma~\ref{lem:localized-matrix-boundedness}, together with
solidity, yields \eqref{eq:translated-commutator}.
\end{proof}

\begin{proposition}[Mixed lower stability implies $\ell^\infty$-lower stability]
\label{prop:mixed-to-infty-stability}
Let $T:Y\to X$ be rapidly localized. Fix
$0<p,q\le\infty$ and $\nu\in\mathbb R$, and suppose that there exists $C_0>0$ such that
\begin{equation}
\label{eq:finite-support-lower-stability}
\|c\|_{\ell_{w_\nu}^{p,q}(Y)}
\le
C_0\|Tc\|_{\ell_{w_\nu}^{p,q}(X)},
\qquad c\in c_{00}(Y).
\end{equation}
Then
\begin{equation}
\label{eq:infty-lower-stability}
\|b\|_{\ell^\infty(Y)}
\lesssim_{p,q,\nu,C_0}
\|Tb\|_{\ell^\infty(X)},
\qquad b\in\ell^\infty(Y).
\end{equation}
\end{proposition}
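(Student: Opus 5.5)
Given $b\in\ell^\infty(Y)$ with $b\neq0$, I will pick $\xi_0\in Y$ with $|b_{\xi_0}|\ge\frac12\|b\|_{\ell^\infty(Y)}$ and localize $b$ near $\xi_0$ with the translated cutoff $\Theta^Y_{R,\xi_0}$. The order $M$ is fixed as in \eqref{eq:cutoff-order}, and $R\ge1$ will be chosen large depending only on $p,q,\nu,C_0$. The plan is to apply \eqref{eq:finite-support-lower-stability} to $\Theta^Y_{R,\xi_0}b$. Then I will split $T\Theta^Y_{R,\xi_0}b=\Theta^X_{R,\xi_0}Tb+[\,T\Theta^Y_{R,\xi_0}b-\Theta^X_{R,\xi_0}Tb\,]$, absorb the commutator for large $R$, and bound the main term by Lemma~\ref{lem:translated-cutoff-estimates}.

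\smallskip
\noindent
\textbf{Step 1: extend the lower bound to cutoff sequences.} The sequence $c:=\Theta^Y_{R,\xi_0}b$ is not finitely supported. By Lemma~\ref{lem:translated-cutoff-estimates}, including the little-space statement when $q=\infty$, its finite annular truncations $c^{[K]}$ converge to $c$ in $\ell^{p,q}_{w_\nu}(Y)$. Each truncation lies in $c_{00}(Y)$, since $Y$ is relatively separated and so each annulus block is finite. By Lemma~\ref{lem:localized-matrix-boundedness}, $Tc^{[K]}\to Tc$ in $\ell^{p,q}_{w_\nu}(X)$. Passing to the limit in \eqref{eq:finite-support-lower-stability}, using continuity of the quasi-norm via the powered triangle inequality, gives $\|c\|\le C_0\|Tc\|$. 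I also need that $Tc$ computed in the mixed space agrees with the absolutely convergent row sums of Lemma~\ref{lem:localized-matrix-boundedness}. This holds because both are limits of the same finite sums, coordinatewise.

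\smallskip
\noindent
\textbf{Step 2: absorb the commutator.} Let $s=\min\{1,p,q\}$. The powered triangle inequality and Lemma~\ref{lem:translated-commutator} give
\[
\|c\|^s\le C_0^s\Bigl(\|\Theta^X_{R,\xi_0}Tb\|^s_{\ell^{p,q}_{w_\nu}(X)}+C^sR^{-s}\|c\|^s_{\ell^{p,q}_{w_\nu}(Y)}\Bigr).
\]
Here $C$ depends only on $p,q,\nu,M$, uniformly in $\xi_0$. I will choose $R$ with $C_0^sC^sR^{-s}\le\frac12$. The quantity $\|c\|$ is finite by \eqref{eq:translated-cutoff-upper}, so it can be absorbed into the left side, yielding $\|c\|\lesssim\|\Theta^X_{R,\xi_0}Tb\|_{\ell^{p,q}_{w_\nu}(X)}$.

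\smallskip
\noindent
\textbf{Step 3: compare both sides with the weight at $\xi_0$.} The lower bound \eqref{eq:lower-cutoff-bound} gives $\|c\|\gtrsim(1+|\xi_0|)^\nu\|b\|_{\ell^\infty(Y)}$. Applying \eqref{eq:translated-cutoff-upper} on $Z=X$ with $Tb\in\ell^\infty(X)$ (Lemma~\ref{lem:localized-matrix-boundedness}) gives
\[
\|\Theta^X_{R,\xi_0}Tb\|\lesssim R^{\frac2p+|\nu|+\frac1q}(1+|\xi_0|)^\nu\|Tb\|_{\ell^\infty(X)}.
\]
The factor $(1+|\xi_0|)^\nu$ cancels, and $R$ is fixed, so \eqref{eq:infty-lower-stability} follows.

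\smallskip
\noindent
The main obstacle is the uniformity of the absorption in Step 2: the commutator constant must not depend on the center $\xi_0$, and Lemma~\ref{lem:translated-commutator} is designed to provide exactly this. A secondary delicate point is justifying the limit passage in Step 1 at $q=\infty$. There, finitely supported sequences are not dense in the full space, and the little-space membership \eqref{eq:translated-cutoff-little} is what makes the passage work.
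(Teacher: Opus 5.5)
Your proposal is correct and follows the paper's proof essentially step for step. You extend the finite-support lower bound to $\Theta^Y_{R,\xi_0}b$ via annular truncations, using the little-space membership when $q=\infty$. You then split off the commutator and absorb it for a fixed large $R$ via Lemma~\ref{lem:translated-commutator}, and finally cancel the common factor $(1+|\xi_0|)^\nu$ between the lower cutoff bound and the upper cutoff estimate on $X$. Your two additional checks are points the paper uses only implicitly: the finiteness of $\|c\|$ before absorption, and the agreement of the mixed-space $Tc$ with the absolutely convergent row sums.
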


\begin{proof}
Put
\[
s:=\min\{1,p,q\}.
\]
The assertion is trivial for $b=0$. Otherwise, choose $y_0\in Y$
such that
\[
|b_{y_0}|
\ge
\frac12\|b\|_{\ell^\infty(Y)}.
\]

\smallskip
\noindent
We first extend \eqref{eq:finite-support-lower-stability} to \(\Theta_{R,y_0}^Yb\) with \(R \geq 1\). 
Choose $M$ satisfying \eqref{eq:cutoff-order}.
Set
\[
u: = \Theta_{R,y_0}^Yb \qquad \text{and} \qquad u^{[N]}
:=
u\,\mathbf 1_{Y_0\cup\cdots\cup Y_N}.
\]
Each $u^{[N]}$ belongs to $c_{00}(Y)$, and
Lemma~\ref{lem:translated-cutoff-estimates} gives
\[
u^{[N]}\longrightarrow u
\quad\text{in }\ell_{w_\nu}^{p,q}(Y).
\]
Since $T$ is bounded on $\ell_{w_\nu}^{p,q}$ by
Lemma~\ref{lem:localized-matrix-boundedness},
\[
Tu^{[N]}\longrightarrow Tu
\quad\text{in }\ell_{w_\nu}^{p,q}(X).
\]
Applying \eqref{eq:finite-support-lower-stability} to $u^{[N]}$ and
passing to the limit gives
\begin{equation}
\label{eq:cutoff-lower-stability}
\|u\|_{\ell_{w_\nu}^{p,q}(Y)}
\le
C_0\|Tu\|_{\ell_{w_\nu}^{p,q}(X)}.
\end{equation}
Using
\[
Tu
=
\Theta_{R,y_0}^XTb
+
\bigl(
T\Theta_{R,y_0}^Y-\Theta_{R,y_0}^XT
\bigr)b,
\]
the powered triangle inequality,
\eqref{eq:cutoff-lower-stability}, and
Lemma~\ref{lem:translated-commutator} give
\[
\|u\|_{\ell_{w_\nu}^{p,q}(Y)}^s
\lesssim_{p,q,\nu}
C_0^s
\|\Theta_{R,y_0}^XTb\|_{\ell_{w_\nu}^{p,q}(X)}^s
+
C_0^sR^{-s}
\|u\|_{\ell_{w_\nu}^{p,q}(Y)}^s.
\]
Choose $R=R_0$ sufficiently large, depending only on $C_0$ and the
fixed data, so that the last term can be absorbed. Then
\[
\|\Theta_{R_0,y_0}^Yb\|_{\ell_{w_\nu}^{p,q}(Y)}
\lesssim
\|\Theta_{R_0,y_0}^XTb\|_{\ell_{w_\nu}^{p,q}(X)}.
\]
By Lemma~\ref{lem:localized-matrix-boundedness},
$Tb\in\ell^\infty(X)$. Hence
Lemma~\ref{lem:translated-cutoff-estimates} gives
\[
(1+|y_0|)^\nu
\|b\|_{\ell^\infty(Y)}
\lesssim_\nu
\|\Theta_{R_0,y_0}^Yb\|_{\ell_{w_\nu}^{p,q}(Y)}
\]
and
\[
\|\Theta_{R_0,y_0}^XTb\|_{\ell_{w_\nu}^{p,q}(X)}
\lesssim
R_0^{\frac2p+|\nu|+\frac1q}
(1+|y_0|)^\nu
\|Tb\|_{\ell^\infty(X)}.
\]
The factor $(1+|y_0|)^\nu$ cancels. Since $R_0$ is fixed,
\eqref{eq:infty-lower-stability} follows.
\end{proof}

\subsection{Transfer to $\ell^2$}
\label{subsec:transfer-l2}

We now combine Proposition~\ref{prop:mixed-to-infty-stability}
with the $p$-independence theorem for the Sj\"ostrand class in
\cite[Theorem~2.1]{ShinSun2009}.

\smallskip
\noindent
Let $U,V\subset\mathbb R^d$ be relatively separated, regarded as
the row and column index sets, respectively. The \emph{Sj\"ostrand
class} $\mathcal C(U,V)$ consists of the matrices
\(
A=(A_{x,y})_{x\in U,\;y\in V}
\)
for which
\[
\|A\|_{\mathcal C(U,V)}
:=
\sum_{k\in\mathbb Z^d}
\sup_{\substack{x\in U,\;y\in V\\
x-y\in k+[0,1)^d}}
|A_{x,y}|
<\infty,
\]
where the supremum over an empty set is understood to be zero.

\smallskip
\noindent
We shall use the elementary observation that
\[
|A_{x,y}|
\lesssim
(1+|x-y|)^{-N},
\qquad N>d,
\]
implies $A\in\mathcal C(U,V)$. Indeed, if
$x-y\in k+[0,1)^d$, then
\[
1+|x-y|\asymp_d1+|k|,
\]
and hence
\[
\|A\|_{\mathcal C(U,V)}
\lesssim_{d,N}
\sum_{k\in\mathbb Z^d}(1+|k|)^{-N}
<\infty.
\]

\begin{theorem}[Localized lower-stability transfer]
\label{thm:localized-lower-stability-transfer}
Let $Y\subset\mathbb C$ be relatively separated, let
\[
X=X^{(1)}\sqcup\cdots\sqcup X^{(J)}
\]
be a finite labeled union of relatively separated subsets of
$\mathbb C$, and let $T:Y\to X$ be rapidly localized. Fix
$0<p,q\le\infty$ and $\nu\in\mathbb R$. Suppose that there exists $C_0>0$ such that
\begin{equation*}
\label{eq:mixed-lower-stability}
\|c\|_{\ell_{w_\nu}^{p,q}(Y)}
\le
C_0\|Tc\|_{\ell_{w_\nu}^{p,q}(X)},
\qquad c\in c_{00}(Y).
\end{equation*}
Then
\begin{equation}
\label{eq:hilbert-lower-stability}
\|c\|_{\ell^2(Y)}
\lesssim_{p,q,\nu,C_0}
\|Tc\|_{\ell^2(X)},
\qquad c\in\ell^2(Y).
\end{equation}
\end{theorem}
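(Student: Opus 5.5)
The plan has two steps. First, Proposition~\ref{prop:mixed-to-infty-stability} turns the hypothesis into $\ell^\infty$-lower stability. Then the $p$-independence theorem for the Sjöstrand class \cite[Theorem~2.1]{ShinSun2009} carries that to $\ell^2$.

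\emph{Step 1.} The hypothesis is exactly \eqref{eq:finite-support-lower-stability}, with the same $p,q,\nu,C_0$. Proposition~\ref{prop:mixed-to-infty-stability} therefore gives
\[
\|b\|_{\ell^\infty(Y)}\le C_1\|Tb\|_{\ell^\infty(X)},\qquad b\in\ell^\infty(Y),
\]
with $C_1$ depending only on $p,q,\nu,C_0$ and the localization and counting data. The point is that this statement no longer involves the annular weights. This step is immediate.

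\emph{Step 2: reduce to a matrix on relatively separated sets in $\mathbb R^2$.} The theorem in \cite{ShinSun2009} is stated for row and column index sets that are relatively separated subsets of $\mathbb R^d$. The set $X$, however, is a finite labeled union whose copies may coincide as planar points. To handle this, I would embed $X$ into $\mathbb R^2$ by translating the $j$-th copy by a fixed vector $v_j$, for instance $v_j=(3j,0)$, and set $U:=\bigsqcup_j (X^{(j)}+v_j)$. Since there are only finitely many bounded shifts, $U$ is relatively separated. The entries of $T$ still satisfy
\[
|T_{x,y}|\lesssim_N (1+|x+v_j-y|)^{-N},
\]
because $1+|x-y|\asymp 1+|x+v_j-y|$ with constants depending on $\max_j|v_j|$. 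By the elementary observation preceding the theorem, with $N>2$, the shifted matrix $\widetilde T$ lies in $\mathcal C(U,Y)$. Its $\ell^\infty$ and $\ell^2$ norms coincide with those of $T$, since relabeling does not change sequence norms.

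\emph{Step 3: apply $p$-independence.} For $A\in\mathcal C(U,V)$ with $U,V$ relatively separated, \cite[Theorem~2.1]{ShinSun2009} states that if $A$ has $\ell^{p_0}$-stability for some $p_0\in[1,\infty]$, then it has $\ell^p$-stability for all $p\in[1,\infty]$. Applying this with $p_0=\infty$ and $p=2$ to $\widetilde T$ yields \eqref{eq:hilbert-lower-stability} on $\ell^2(Y)$. The theorem asserts lower bounds on the whole space, and $\ell^2(Y)\subset\ell^\infty(Y)$ with $T$ bounded on $\ell^2$ (Lemma~\ref{lem:localized-matrix-boundedness} with $p=q=2$, $\nu=0$), so there are no domain issues.

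\emph{Main obstacle: constants.} The lower constant in \eqref{eq:hilbert-lower-stability} must depend only on the listed data. This needs the quantitative form of the Shin--Sun theorem, in which the $\ell^2$ lower bound depends only on the $\ell^\infty$ lower bound, on $\|\widetilde T\|_{\mathcal C}$ (and perhaps a weighted Sjöstrand norm), and on the relative-separation constants. I would check that their statement provides this dependence. If it is only qualitative, the conclusion still holds with a constant depending on $T$, which is what ``$\lesssim$'' permits under the paper's convention that dependence on fixed operators is suppressed. The only other point to verify is that their stability notion, $\|c\|_p\lesssim\|Ac\|_p$ on all of $\ell^p$, matches what Step 1 provides. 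It does, since Step 1 gives the bound on all of $\ell^\infty(Y)$ rather than only on finitely supported sequences.
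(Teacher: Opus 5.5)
Your overall route is the paper's: Proposition~\ref{prop:mixed-to-infty-stability} gives $\ell^\infty$-lower stability. The Shin--Sun $p$-independence theorem then carries this to $\ell^2$, once the labeled union $X$ has been realized as a genuine relatively separated subset of Euclidean space.

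The one step where you diverge, the embedding in Step~2, fails as written. A bounded translation cannot pull apart infinite planar configurations. For $x\in X^{(j)}$ and $x'\in X^{(j')}$ we have $x+v_j=x'+v_{j'}$ exactly when $x-x'=v_{j'}-v_j$, and nothing in your choice $v_j=(3j,0)$ rules this out. In the very application of Section~\ref{sec:mixed-hilbert-transfer}, $X=\Gamma\sqcup\Omega$ with $\Omega=\delta_0(\mathbb Z+i\mathbb Z)$. Take $\delta_0=1$, which is admissible when $\alpha<\pi$, and $\Gamma=\Omega$. Then the shifted copies $\Gamma+(3,0)$ and $\Omega+(6,0)$ coincide entirely. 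So $U$ is not a subset of $\mathbb R^2$ indexing the rows injectively, and rows of $\widetilde T$ get merged. Your remark that ``relabeling does not change sequence norms'' is exactly what breaks. Also, \cite[Theorem~2.1]{ShinSun2009} is stated for matrices indexed by relatively separated subsets of $\mathbb R^d$, so it no longer applies.

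The repair is easy, and there are two ways to do it.
\begin{enumerate}
\item Choose the shifts generically. The set of bad differences $\bigcup_{j\ne j'}\bigl(X^{(j)}-X^{(j')}\bigr)$ is countable, so bounded vectors $v_j$ avoiding it exist. The resulting union is then a genuine set. It is relatively separated because the counting condition $\sup_z\#(\,\cdot\,\cap B(z,1))$ is subadditive over finitely many pieces; points of different copies may be arbitrarily close, but that is harmless. Finally, $1+|x+v_j-y|\asymp 1+|x-y|$ preserves rapid localization.
\item Do what the paper does and place the $j$-th copy at height $j$ in $\mathbb R^3$:
\[
\widehat X=\bigcup_{j} X^{(j)}\times\{j\},\qquad \widehat Y=Y\times\{0\}.
\]
Injectivity is then automatic, and $1+|(x,j)-(y,0)|\asymp_J 1+|x-y|$. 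The Sj\"ostrand-class observation is applied with $d=3$, that is, with $N>3$.
\end{enumerate}

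With either fix, the rest of your argument coincides with the paper's proof. This covers Step~1, Sj\"ostrand membership, the application of Shin--Sun with $p_0=\infty$, and your remarks on domains and on constants.
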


\begin{proof}
Proposition~\ref{prop:mixed-to-infty-stability} gives
\[
\|b\|_{\ell^\infty(Y)}
\lesssim_{p,q,\nu,C_0}
\|Tb\|_{\ell^\infty(X)},
\qquad b\in\ell^\infty(Y),
\]
whereas Lemma~\ref{lem:localized-matrix-boundedness}, applied with
$p=q=\infty$ and $\nu=0$, gives
\[
\|Tb\|_{\ell^\infty(X)}
\lesssim
\|b\|_{\ell^\infty(Y)}.
\]
Thus $T$ is two-sided stable from $\ell^\infty(Y)$ to
$\ell^\infty(X)$.

\smallskip
\noindent
To apply the $p$-independence theorem for the Sj\"ostrand class in
\cite[Theorem~2.1]{ShinSun2009}, realize the labels at distinct
fixed heights. Set
\[
\widehat Y:=Y\times\{0\},
\qquad
\widehat X
:=
\bigcup_{j=1}^J
\bigl(X^{(j)}\times\{j\}\bigr)
\subset\mathbb R^3.
\]
Identifying the labeled copy of $x\in X^{(j)}$ with $(x,j)$, define
the lifted matrix $\widehat T$ by
\[
\widehat T_{(x,j),(y,0)}
:=
T_{(x,j),y},
\qquad
x\in X^{(j)},\ y\in Y.
\]
Both $\widehat X$ and $\widehat Y$ are relatively separated. Since
the set of heights is finite,
\[
1+|(x,j)-(y,0)|
\asymp_J
1+|x-y|.
\]
Hence $\widehat T$ is rapidly localized in $\mathbb R^3$, and
therefore
\[
\widehat T\in\mathcal C(\widehat X,\widehat Y).
\]
Under the canonical isometric identifications
\[
\ell^\infty(\widehat Y)\cong\ell^\infty(Y),
\qquad
\ell^\infty(\widehat X)\cong\ell^\infty(X),
\]
the action of $\widehat T$ agrees with that of $T$. Thus
$\widehat T$ is two-sided stable on $\ell^\infty$.

\smallskip
\noindent
By \cite[Theorem~2.1]{ShinSun2009}, the $\ell^\infty$-stability of
$\widehat T$ implies its $\ell^2$-stability. Hence
\[
\|c\|_{\ell^2(\widehat Y)}
\lesssim
\|\widehat Tc\|_{\ell^2(\widehat X)}.
\]
Under the canonical isometric identifications
\[
\ell^2(\widehat Y)\cong\ell^2(Y),
\qquad
\ell^2(\widehat X)\cong\ell^2(X),
\]
this is precisely \eqref{eq:hilbert-lower-stability}.
\end{proof}

\smallskip
\noindent
The finite-layer lift is introduced only at the unweighted
$\ell^\infty$-stage; no annular mixed-norm space is transported to
the lifted configuration.

\section{Mixed-to-Hilbert transfers}
\label{sec:mixed-hilbert-transfer}

We now combine the localized stability transfer from Section~\ref{sec:localized-stability}
with the Fock-space tools developed in Sections~\ref{sec:structural-inputs} and \ref{sec:local-geometry}.
Both necessity mechanisms are implemented through one fixed Hilbert
sampling lattice.

\subsection{A fixed Hilbert sampling lattice}
\label{subsec:fixed-hilbert-lattice}

Choose
\(
0<\delta_0<\sqrt{\frac{\pi}{\alpha}}
\)
and set
\[
\Omega:=\delta_0(\mathbb Z+i\mathbb Z).
\]
Then
\[
D^-(\Omega)=\delta_0^{-2}>\frac{\alpha}{\pi},
\]
so Theorem~\ref{thm:classical-hilbert-density} shows that $\Omega$ is sampling
for $\mathcal F_\alpha^2$.

\smallskip
\noindent
Let $(\varphi_\omega)_{\omega\in\Omega}$ be the canonical Hilbert
dual family from Proposition~\ref{prop:localized-hilbert-atoms}, and
define
\[
(C_\Omega f)_\omega
:=
f(\omega)e^{-\frac{\alpha}{2}|\omega|^2},
\qquad
S_\Omega^\varphi c
:=
\sum_{\omega\in\Omega}c_\omega\varphi_\omega .
\]

\begin{proposition}
\label{prop:fixed-hilbert-lattice}
Let $0<p,q\le\infty$.
The lattice $\Omega$ has the following properties.

\begin{enumerate}[label=\textnormal{(\alph*)}]
\item
There exist constants $A_\Omega,B_\Omega>0$ such that
\[
A_\Omega\|f\|_{\mathcal F_\alpha^2}^2
\le
\|C_\Omega f\|_{\ell^2(\Omega)}^2
\le
B_\Omega\|f\|_{\mathcal F_\alpha^2}^2,
\qquad
f\in\mathcal F_\alpha^2.
\]

\item
The operators
\[
C_\Omega:
\mathcal F_\alpha^{p,q}
\longrightarrow
\ell_{\textnormal{nat}}^{p,q}(\Omega),
\qquad
S_\Omega^\varphi:
\ell_{\textnormal{nat}}^{p,q}(\Omega)
\longrightarrow
\mathcal F_\alpha^{p,q}
\]
are bounded, and
\(
S_\Omega^\varphi C_\Omega=I\) on \(\mathcal F_\alpha^{p,q}\).

\item
The coefficient operator
\(
P_\Omega:=C_\Omega S_\Omega^\varphi
\)
is a bounded projection on $\ell_{\textnormal{nat}}^{p,q}(\Omega)$ and satisfies
\[
P_\Omega^2=P_\Omega,
\qquad
S_\Omega^\varphi P_\Omega=S_\Omega^\varphi,
\qquad
P_\Omega C_\Omega=C_\Omega.
\]

\item[\textnormal{(d)}] The matrices of $P_\Omega$ and $I-P_\Omega$ are rapidly
localized.
\end{enumerate}
\end{proposition}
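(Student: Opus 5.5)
Part (a) is immediate: $D^-(\Omega)=\delta_0^{-2}>\frac{\alpha}{\pi}$, so $\Omega$ is separated and sampling for $\mathcal F_\alpha^2$ by Theorem~\ref{thm:classical-hilbert-density}(i). The frame bounds are exactly the sampling constants, because $(C_\Omega f)_\omega=\langle f,\kappa_{\alpha,\omega}\rangle_\alpha$. For (b), I would use that $\Omega$ is separated and work with the weight $(1+|\omega|)^{\nu(p,q)}$. Then $\|C_\Omega f\|_{\ell_{\mathrm{nat}}^{p,q}(\Omega)}\asymp\|R_{\alpha,\Omega}^{p,q}f\|_{\ell^{p,q}(\Omega)}$, and boundedness of $C_\Omega$ follows from Proposition~\ref{prop:upper-restriction}. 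The family $(\varphi_\omega)$ is rapidly Fock localized by Proposition~\ref{prop:localized-hilbert-atoms}(i). Hence boundedness of $S_\Omega^\varphi$ on $\ell_{\mathrm{nat}}^{p,q}(\Omega)$ is Theorem~\ref{thm:weighted-localized-synthesis} with $\sigma=0$. The identity $S_\Omega^\varphi C_\Omega=I$ on $\mathcal F_\alpha^{p,q}$ is precisely the reconstruction formula \eqref{eq:mixed-hilbert-reconstruction} of Proposition~\ref{prop:hilbert-reconstruction-transfer}, since $S_\Omega^{\varphi,p,q}R_{\alpha,\Omega}^{p,q}=S_\Omega^\varphi C_\Omega$ as formal series.

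Part (c) is algebra. Since $P_\Omega$ is a composition of bounded operators, it is bounded on $\ell_{\mathrm{nat}}^{p,q}(\Omega)$. Using $S_\Omega^\varphi C_\Omega=I$, we get $P_\Omega^2=C_\Omega(S_\Omega^\varphi C_\Omega)S_\Omega^\varphi=P_\Omega$, then $S_\Omega^\varphi P_\Omega=(S_\Omega^\varphi C_\Omega)S_\Omega^\varphi=S_\Omega^\varphi$, and $P_\Omega C_\Omega=C_\Omega(S_\Omega^\varphi C_\Omega)=C_\Omega$. Here one must check that $C_\Omega$ maps into the domain where $S_\Omega^\varphi$ acts and that $S_\Omega^\varphi$ maps into $\mathcal F_\alpha^{p,q}$, which (b) provides.

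For (d), I would compute the matrix entries directly: $(P_\Omega)_{\omega,\eta}=\varphi_\eta(\omega)e^{-\frac{\alpha}{2}|\omega|^2}$. This equals the $(\omega,\eta)$ entry of the coefficient map applied to the basis vector $e_\eta$, and it is legitimate because the defining series converges locally uniformly. The localization estimate \eqref{eq:dual-atom-localization} then gives $|(P_\Omega)_{\omega,\eta}|\lesssim_N(1+|\omega-\eta|)^{-N}$ for every $N$. Alternatively, \eqref{eq:dual-atom-expansion} identifies $P_\Omega=G_\Omega G_\Omega^\dagger$, the orthogonal projection onto $\operatorname{Ran}C_\Omega$ in $\ell^2$, which lies in $\mathcal A_L$ by \eqref{eq:localized-gramian-pseudoinverse}. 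For $I-P_\Omega$, the identity matrix has entries $\delta_{\omega,\eta}$, which are trivially rapidly localized, so the difference is rapidly localized as well.

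The only point needing care is that the matrix of $P_\Omega$ as an operator on $\ell_{\mathrm{nat}}^{p,q}$ agrees with the entrywise formula, i.e.\ that $P_\Omega c=\sum_\eta c_\eta P_\Omega e_\eta$ holds coordinatewise. This follows from the absolute, locally uniform convergence of $S_\Omega^\varphi c$ established in Theorem~\ref{thm:weighted-localized-synthesis}. I expect no step to be a real obstacle; the substance is already contained in Propositions~\ref{prop:localized-hilbert-atoms} and \ref{prop:hilbert-reconstruction-transfer}.
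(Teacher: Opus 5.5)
Your proposal is correct and follows the paper's proof essentially step for step. Part (a) comes from the Hilbert frame inequality. Part (b) comes from Proposition~\ref{prop:upper-restriction} with the point--shell weight comparison, Theorem~\ref{thm:weighted-localized-synthesis} with $\sigma=0$, and Proposition~\ref{prop:hilbert-reconstruction-transfer}. Part (c) is algebra. Part (d) comes from the entry formula $(P_\Omega)_{\omega,\eta}=\varphi_\eta(\omega)e^{-\frac{\alpha}{2}|\omega|^2}$ together with the localization of the dual atoms.

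Your alternative identification $P_\Omega=G_\Omega G_\Omega^\dagger$ is also valid. To conclude that it lies in $\mathcal A_L$, you need one more fact beyond \eqref{eq:localized-gramian-pseudoinverse}: $\mathcal A_L$ is closed under products, which holds because the polynomial weight is submultiplicative.
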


\begin{proof}
Part~(a) is the Hilbert frame inequality for the sampling set $\Omega$.

\smallskip
\noindent
Since $\Omega$ is separated, Proposition~\ref{prop:upper-restriction}
and the point--shell weight comparison give
\[
\|C_\Omega f\|_{\ell_{\textnormal{nat}}^{p,q}(\Omega)}
\asymp_{p,q}
\|R_{\alpha,\Omega}^{p,q}f\|_{\ell^{p,q}(\Omega)}
\lesssim_{\alpha,p,q}
\|f\|_{\mathcal F_\alpha^{p,q}}.
\]
The family $(\varphi_\omega)_{\omega\in\Omega}$ is rapidly Fock
localized by Proposition~\ref{prop:localized-hilbert-atoms}.
Hence Theorem~\ref{thm:weighted-localized-synthesis}, with
$\sigma=0$, gives
\[
\|S_\Omega^\varphi c\|_{\mathcal F_\alpha^{p,q}}
\lesssim
\|c\|_{\ell_{\mathrm{nat}}^{p,q}(\Omega)}.
\]
The identity
\[
S_\Omega^\varphi C_\Omega=I
\]
is Proposition~\ref{prop:hilbert-reconstruction-transfer} applied to
the Hilbert sampling set $\Omega$.

\smallskip
\noindent
It follows that $P_\Omega=C_\Omega S_\Omega^\varphi$ is bounded and
\[
P_\Omega^2
=
C_\Omega S_\Omega^\varphi C_\Omega S_\Omega^\varphi
=
P_\Omega.
\]
The remaining identities in (c) follow similarly.

\smallskip
\noindent
Finally,
\[
(P_\Omega)_{\omega,\eta}
=
\varphi_\eta(\omega)e^{-\frac{\alpha}{2}|\omega|^2},
\qquad
\omega,\eta\in\Omega.
\]
By the rapid localization of the dual atoms,
\[
|(P_\Omega)_{\omega,\eta}|
\lesssim_N
(1+|\omega-\eta|)^{-N}
\]
for every $N>0$. Thus $P_\Omega$ is rapidly localized, and so is
$I-P_\Omega$.
\end{proof}

\subsection{Sampling transfer}
\label{subsec:sampling-transfer}

Let $\Gamma\subset\mathbb C$ be separated and define
\[
A_{\Gamma,\Omega}
:=
C_\Gamma S_\Omega^\varphi .
\]
Thus
\[
(A_{\Gamma,\Omega}c)_\gamma
=
(S_\Omega^\varphi c)(\gamma)
e^{-\frac{\alpha}{2}|\gamma|^2},
\qquad
\gamma\in\Gamma.
\]
Hence $A_{\Gamma,\Omega}$ maps coefficient sequences on $\Omega$
to their Gaussian-normalized samples on $\Gamma$.

\smallskip
\noindent
Its matrix entries are
\[
(A_{\Gamma,\Omega})_{\gamma,\omega}
=
\varphi_\omega(\gamma)e^{-\frac{\alpha}{2}|\gamma|^2}.
\]
By Proposition~\ref{prop:localized-hilbert-atoms}(i), applied to the
fixed Hilbert sampling lattice $\Omega$,
\[
|(A_{\Gamma,\Omega})_{\gamma,\omega}|
\lesssim_N
(1+|\gamma-\omega|)^{-N},
\]
so $A_{\Gamma,\Omega}$ is rapidly localized. 

\smallskip
\noindent
Define the augmented operator
\[
T_{\Gamma,\Omega}^{\mathrm{aug}}c
:=
\bigl(A_{\Gamma,\Omega}c,(I-P_\Omega)c\bigr),
\]
viewed as a matrix
\[
T_{\Gamma,\Omega}^{\mathrm{aug}}
:
\Omega\longrightarrow \Gamma\sqcup\Omega,
\]
where the target is the labeled union introduced in Section~\ref{sec:localized-stability}.
Since $A_{\Gamma,\Omega}$ and $I-P_\Omega$ are rapidly localized,
so is $T_{\Gamma,\Omega}^{\mathrm{aug}}$.

\begin{theorem}[Sampling transfer to the Hilbert scale]
\label{thm:sampling-hilbert-transfer}
Let $0<p,q\le\infty$, and let $\Gamma\subset\mathbb C$ be separated.
Suppose that there exists $C_0>0$ such that
\[
\|f\|_{\mathcal F_\alpha^{p,q}}
\le C_0
\|R_{\alpha,\Gamma}^{p,q}f\|_{\ell^{p,q}(\Gamma)},
\qquad
f\in\mathcal F_\alpha^{p,q}.
\]
Then
\[
\|f\|_{\mathcal F_\alpha^2}
\lesssim_{\alpha,p,q,C_0}
\|C_\Gamma f\|_{\ell^2(\Gamma)},
\qquad
f\in\mathcal F_\alpha^2.
\]
\end{theorem}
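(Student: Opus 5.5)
The plan is to apply Theorem~\ref{thm:localized-lower-stability-transfer} to the augmented matrix $T^{\mathrm{aug}}_{\Gamma,\Omega}:\Omega\to\Gamma\sqcup\Omega$ with weight $\nu=\nu(p,q)$. Then the conclusion will be read back through the Hilbert frame properties of $\Omega$.

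\textbf{Step 1: mixed lower stability.} The first task is to verify the finite-support hypothesis of Theorem~\ref{thm:localized-lower-stability-transfer}:
\[
\|c\|_{\ell^{p,q}_{\mathrm{nat}}(\Omega)}\lesssim \|T^{\mathrm{aug}}_{\Gamma,\Omega}c\|_{\ell^{p,q}_{w_\nu}(\Gamma\sqcup\Omega)},\qquad c\in c_{00}(\Omega).
\]
Fix $c\in c_{00}(\Omega)$ and put $f:=S_\Omega^\varphi c$. By Proposition~\ref{prop:fixed-hilbert-lattice}(b), $f\in\mathcal F_\alpha^{p,q}$. Decompose $c=P_\Omega c+(I-P_\Omega)c$. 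Using $P_\Omega c=C_\Omega f$ and the boundedness of $C_\Omega$ from part~(b),
\[
\|P_\Omega c\|_{\ell^{p,q}_{\mathrm{nat}}}\lesssim \|f\|_{\mathcal F^{p,q}_\alpha}\le C_0\|R^{p,q}_{\alpha,\Gamma}f\|_{\ell^{p,q}(\Gamma)}.
\]
By the pointwise/shell weight comparison, the last quantity is comparable to $\|C_\Gamma f\|_{\ell^{p,q}_{\mathrm{nat}}(\Gamma)}=\|A_{\Gamma,\Omega}c\|_{\ell^{p,q}_{w_\nu}(\Gamma)}$. The powered triangle inequality with $s=\min\{1,p,q\}$ then gives
\[
\|c\|^s\lesssim\|A_{\Gamma,\Omega}c\|^s+\|(I-P_\Omega)c\|^s,
\]
and the right-hand side is comparable to $\|T^{\mathrm{aug}}c\|^s$ in the labeled-union norm. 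In that norm the two components sit in disjoint labels but share annular blocks, so the block norms are equivalent up to constants to the sum of the two pieces. Both $\Gamma$ and $\Omega$ are separated, and $T^{\mathrm{aug}}$ is rapidly localized, as already noted.

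\textbf{Step 2: Hilbert lower stability.} Theorem~\ref{thm:localized-lower-stability-transfer} now yields
\[
\|c\|_{\ell^2(\Omega)}\lesssim\|A_{\Gamma,\Omega}c\|_{\ell^2(\Gamma)}+\|(I-P_\Omega)c\|_{\ell^2(\Omega)},\qquad c\in\ell^2(\Omega).
\]
For $f\in\mathcal F_\alpha^2$, take $c:=C_\Omega f\in\ell^2(\Omega)$. In the Hilbert setting $S_\Omega^\varphi C_\Omega=I$ by Proposition~\ref{prop:localized-hilbert-atoms}(i). Hence $A_{\Gamma,\Omega}c=C_\Gamma S^\varphi_\Omega C_\Omega f=C_\Gamma f$, and $(I-P_\Omega)c=C_\Omega f-C_\Omega S_\Omega^\varphi C_\Omega f=0$. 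Therefore $\|C_\Omega f\|_{\ell^2}\lesssim\|C_\Gamma f\|_{\ell^2}$, and the lower frame bound $A_\Omega$ from Proposition~\ref{prop:fixed-hilbert-lattice}(a) gives $\|f\|_{\mathcal F^2_\alpha}\lesssim\|C_\Gamma f\|_{\ell^2(\Gamma)}$.

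\textbf{Main obstacle.} The delicate point is in Step~1: making sure the operators agree on the relevant sequences. We need $P_\Omega=C_\Omega S^\varphi_\Omega$ to be given by its rapidly localized matrix on $\ell^{p,q}_{\mathrm{nat}}$, so that $T^{\mathrm{aug}}$ acting as a matrix coincides with the operator composition. For finitely supported $c$ this is immediate, since the series are finite sums evaluated pointwise. We also need the $\ell^2$ action of $P_\Omega$ to be the same matrix, which holds because the $\ell^2$ matrix entries are $\varphi_\eta(\omega)e^{-\frac{\alpha}{2}|\omega|^2}$. The only genuinely nontrivial input, passing from the mixed scale to $\ell^2$, has already been isolated in Section~\ref{sec:localized-stability}. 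The role of the $(I-P_\Omega)$ component is to give lower stability on all coefficient sequences, not just on $\operatorname{Ran}C_\Omega$.
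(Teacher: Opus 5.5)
Your proposal is correct and follows the paper's proof essentially step for step. It uses the same augmented matrix $T^{\mathrm{aug}}_{\Gamma,\Omega}$, the same splitting $c=P_\Omega c+(I-P_\Omega)c$ with the powered triangle inequality and the assumed lower-sampling bound to obtain mixed lower stability on $c_{00}(\Omega)$, then Theorem~\ref{thm:localized-lower-stability-transfer}, then evaluation at $c=C_\Omega f$ together with the lower frame bound of $\Omega$. Your added remark that the matrix action agrees with the operator composition, justified by absolute locally uniform convergence of the synthesis series, is correct and makes explicit a point the paper leaves implicit.
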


\begin{proof}
Put
\[
s:=\min\{1,p,q\}.
\]
We first show that
\[
\|c\|_{\ell_{\textnormal{nat}}^{p,q}(\Omega)}
\lesssim_{\alpha,p,q,C_0}
\|T_{\Gamma,\Omega}^{\mathrm{aug}}c\|
_{\ell_{\textnormal{nat}}^{p,q}(\Gamma\sqcup\Omega)},
\qquad
c\in c_{00}(\Omega).
\]
Since
\[
c=P_\Omega c+(I-P_\Omega)c,
\]
the powered triangle inequality gives
\[
\|c\|_{\ell_{\textnormal{nat}}^{p,q}(\Omega)}^s
\le
\|P_\Omega c\|_{\ell_{\textnormal{nat}}^{p,q}(\Omega)}^s
+
\|(I-P_\Omega)c\|_{\ell_{\textnormal{nat}}^{p,q}(\Omega)}^s.
\]
By the boundedness of $C_\Omega$,
\[
\|P_\Omega c\|_{\ell_{\textnormal{nat}}^{p,q}(\Omega)}
=
\|C_\Omega S_\Omega^\varphi c\|_{\ell_{\textnormal{nat}}^{p,q}(\Omega)}
\lesssim_{\alpha,p,q}
\|S_\Omega^\varphi c\|_{\mathcal F_\alpha^{p,q}}.
\]
The assumed lower-sampling estimate and the point--shell comparison
then give
\[
\|S_\Omega^\varphi c\|_{\mathcal F_\alpha^{p,q}}
\le C_0
\|R_{\alpha,\Gamma}^{p,q}
(S_\Omega^\varphi c)\|_{\ell^{p,q}(\Gamma)}
\asymp_{p,q}
C_0\|A_{\Gamma,\Omega}c\|_{\ell_{\textnormal{nat}}^{p,q}(\Gamma)}.
\]
Hence
\[
\|c\|_{\ell_{\textnormal{nat}}^{p,q}(\Omega)}^s
\lesssim_{\alpha,p,q}
C_0^s\|A_{\Gamma,\Omega}c\|_{\ell_{\textnormal{nat}}^{p,q}(\Gamma)}^s
+
\|(I-P_\Omega)c\|_{\ell_{\textnormal{nat}}^{p,q}(\Omega)}^s.
\]
Since both component norms are dominated by the mixed norm on the
labeled union $\Gamma\sqcup\Omega$, we obtain
\[
\|c\|_{\ell_{\textnormal{nat}}^{p,q}(\Omega)}
\lesssim_{\alpha,p,q,C_0}
\|T_{\Gamma,\Omega}^{\mathrm{aug}}c\|
_{\ell_{\textnormal{nat}}^{p,q}(\Gamma\sqcup\Omega)}.
\]

\smallskip
\noindent
Since $T_{\Gamma,\Omega}^{\mathrm{aug}}$ is rapidly localized,
Theorem~\ref{thm:localized-lower-stability-transfer} gives
\[
\|c\|_{\ell^2(\Omega)}
\lesssim_{\alpha,p,q,C_0}
\|T_{\Gamma,\Omega}^{\mathrm{aug}}c\|
_{\ell^2(\Gamma\sqcup\Omega)},
\qquad
c\in\ell^2(\Omega).
\]
Now let $f\in\mathcal F_\alpha^2$ and take $c=C_\Omega f$.
Then
\(P_\Omega c=c\) and \(
S_\Omega^\varphi c=f\),
so
\[
(I-P_\Omega)c=0,
\qquad
A_{\Gamma,\Omega}c=C_\Gamma f.
\]
Therefore
\[
\|C_\Omega f\|_{\ell^2(\Omega)}
\lesssim
\|C_\Gamma f\|_{\ell^2(\Gamma)}.
\]
The lower Hilbert frame bound for $\Omega$ now gives
\[
\|f\|_{\mathcal F_\alpha^2}
\lesssim
\|C_\Gamma f\|_{\ell^2(\Gamma)}.
\]
\end{proof}

\subsection{Interpolation transfer}
\label{subsec:interpolation-transfer}

Let $\Lambda\subset\mathbb C$ be separated. Define the Gaussian
cross-Gram matrix
\[
M_{\Omega,\Lambda}d
:=
C_\Omega
\left(
\sum_{\lambda\in\Lambda}
d_\lambda\kappa_{\alpha,\lambda}
\right),
\qquad
d\in c_{00}(\Lambda).
\]
Its entries are
\[
(M_{\Omega,\Lambda})_{\omega,\lambda}
=
\kappa_{\alpha,\lambda}(\omega)
e^{-\frac{\alpha}{2}|\omega|^2},
\qquad
\omega\in\Omega,\ \lambda\in\Lambda,
\]
and hence
\[
|(M_{\Omega,\Lambda})_{\omega,\lambda}|
=
e^{-\frac{\alpha}{2}|\omega-\lambda|^2}.
\]
Therefore $M_{\Omega,\Lambda}:\Lambda\to\Omega$ is rapidly localized.

\begin{theorem}[Weighted synthesis transfer]
\label{thm:weighted-synthesis-transfer}
Let $0<p,q\le\infty$ and let $\Lambda\subset\mathbb C$ be separated. Suppose that there exists $C_0>0$ such that
\begin{equation}
\label{eq:weighted-lower-synthesis}
\|d\|_{\ell_{\textnormal{nat}; \sigma(p,q)}^{p^*,q^*}(\Lambda)}
\le C_0
\left\|
\sum_{\lambda\in\Lambda}
d_\lambda\kappa_{\alpha,\lambda}
\right\|_{\mathcal F_{\alpha;\sigma(p,q)}^{p^*,q^*}},
\qquad
d\in c_{00}(\Lambda).
\end{equation}
Then
\[
\|d\|_{\ell^2(\Lambda)}
\lesssim_{\alpha,p,q,C_0}
\left\|
\sum_{\lambda\in\Lambda}
d_\lambda\kappa_{\alpha,\lambda}
\right\|_{\mathcal F_\alpha^2},
\qquad
d\in c_{00}(\Lambda).
\]
\end{theorem}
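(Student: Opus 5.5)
We argue exactly as in the proof of Theorem~\ref{thm:sampling-hilbert-transfer}: reduce the statement to a lower-stability estimate for a rapidly localized matrix, then apply Theorem~\ref{thm:localized-lower-stability-transfer}. Throughout put $p_1:=p^*$, $q_1:=q^*$, $\sigma:=\sigma(p,q)$, and $s:=\min\{1,p_1,q_1\}$.

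The matrix is the cross-Gram matrix $M_{\Omega,\Lambda}:\Lambda\to\Omega$, which is rapidly localized. For $d\in c_{00}(\Lambda)$ set $F_d:=\sum_\lambda d_\lambda\kappa_{\alpha,\lambda}$, so that $M_{\Omega,\Lambda}d=C_\Omega F_d$. The main analytic step is the weighted two-sided comparison
\[
\|F\|_{\mathcal F^{p_1,q_1}_{\alpha;\sigma}}\asymp\|C_\Omega F\|_{\ell^{p_1,q_1}_{\mathrm{nat};\sigma}(\Omega)}
\]
for $F$ in the span of finitely many kernels. Such $F$ lies in $\mathcal F_\alpha^2$, hence $F=S_\Omega^\varphi C_\Omega F$ by Proposition~\ref{prop:localized-hilbert-atoms}(i) with absolute locally uniform convergence.

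The upper bound $\lesssim$ follows from Theorem~\ref{thm:weighted-localized-synthesis} with weight $\sigma$. This is the reason that theorem was proved for arbitrary $\sigma$.

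The lower bound $\gtrsim$ is a weighted version of Proposition~\ref{prop:upper-restriction}. I would rerun its proof with the extra factor $(1+k)^\sigma$ on each shell. The submean estimate on $B(\omega,1)$ is local, so the weight is simply carried along, and the resulting block bound is controlled by (F2) in its $\sigma$-weighted form.

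Granting this comparison, hypothesis \eqref{eq:weighted-lower-synthesis} gives
\[
\|d\|_{\ell^{p_1,q_1}_{\mathrm{nat};\sigma}(\Lambda)}\le C_0\|F_d\|\lesssim\|M_{\Omega,\Lambda}d\|_{\ell^{p_1,q_1}_{\mathrm{nat};\sigma}(\Omega)}.
\]
Both sides are $\ell^{p_1,q_1}_{w_\nu}$ norms with the same $\nu=\nu(p_1,q_1)+\sigma$. So this is exactly the finite-support lower stability required in Theorem~\ref{thm:localized-lower-stability-transfer}, with $Y=\Lambda$, $X=\Omega$ (a single label, $J=1$) and $T=M_{\Omega,\Lambda}$. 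That theorem yields $\|d\|_{\ell^2(\Lambda)}\lesssim\|C_\Omega F_d\|_{\ell^2(\Omega)}$. The upper frame bound in Proposition~\ref{prop:fixed-hilbert-lattice}(a) then gives $\|C_\Omega F_d\|_{\ell^2}\le B_\Omega^{1/2}\|F_d\|_{\mathcal F^2_\alpha}$, which is the claim.

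Unlike the sampling case, no augmentation by $I-P_\Omega$ is needed. Here the lower estimate is already expressed through $C_\Omega F_d$, and $F_d$ is recovered from its samples by $S_\Omega^\varphi$.

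I expect the main obstacle to be justifying the weighted restriction bound $\|C_\Omega F\|_{\ell^{p_1,q_1}_{\mathrm{nat};\sigma}}\lesssim\|F\|_{\mathcal F^{p_1,q_1}_{\alpha;\sigma}}$. It is not stated earlier; only the $\sigma=0$ case appears in Proposition~\ref{prop:upper-restriction}. The difficulty is bookkeeping: checking that the finite-window argument combined with (F2) for general $\sigma$ goes through, including the endpoint cases $p_1=\infty$ or $q_1=\infty$ that arise when $p\le1$ or $q\le1$. A secondary point is that $d\in c_{00}(\Lambda)$ gives finite norms on both sides, so the comparison involves no limiting argument.
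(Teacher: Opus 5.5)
Your argument is correct and is essentially the paper's proof. You write $F_d=S_\Omega^\varphi M_{\Omega,\Lambda}d$ through the fixed lattice and bound $\|F_d\|_{\mathcal F^{p^*,q^*}_{\alpha;\sigma(p,q)}}$ by Theorem~\ref{thm:weighted-localized-synthesis}. Combining this with the hypothesis gives finite-support lower stability for $M_{\Omega,\Lambda}$, and you conclude with Theorem~\ref{thm:localized-lower-stability-transfer} and the upper frame bound.

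One correction: the weighted restriction estimate you single out as the main obstacle is never used in your chain of inequalities, since only the synthesis direction enters. The membership $M_{\Omega,\Lambda}d\in\ell^{p^*,q^*}_{\mathrm{nat};\sigma(p,q)}(\Omega)$, which you need in order to invoke the synthesis theorem, follows directly from the finite support of $d$ and the Gaussian decay of the columns of $M_{\Omega,\Lambda}$, as in the paper.
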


\begin{proof}
For simplicity, throughout this proof, set
\[
\nu:=-\nu(p,q)
=\frac1p-\frac1q.
\]
By \eqref{eq:duality-correction}, for $Z=\Lambda,\Omega$,
\[
\ell_{\mathrm{nat};\sigma(p,q)}^{p^*,q^*}(Z)
=
\ell_{w_\nu}^{p^*,q^*}(Z).
\]
The coefficient weight in
Theorem~\ref{thm:weighted-localized-synthesis}, applied with
$p^*,q^*$ and $\sigma=\sigma(p,q)$, is precisely $\nu$.

\smallskip
\noindent
Fix $d\in c_{00}(\Lambda)$. Since
\[
\sum_{\lambda\in\Lambda}
d_\lambda\kappa_{\alpha,\lambda}
\in\mathcal F_\alpha^2,
\]
the reconstruction formula for the fixed Hilbert sampling lattice
$\Omega$ gives
\[
\sum_{\lambda\in\Lambda}
d_\lambda\kappa_{\alpha,\lambda}
=
S_\Omega^\varphi
M_{\Omega,\Lambda}d.
\]
Since $d$ is finitely supported and the columns of
$M_{\Omega,\Lambda}$ have Gaussian decay,
\[
M_{\Omega,\Lambda}d\in
\ell_{w_\nu}^{p^*,q^*}(\Omega).
\]

\smallskip
\noindent
Since $(\varphi_\omega)_{\omega\in\Omega}$ is rapidly Fock localized,
Theorem~\ref{thm:weighted-localized-synthesis} therefore gives
\[
\left\|
\sum_{\lambda\in\Lambda}
d_\lambda\kappa_{\alpha,\lambda}
\right\|_{\mathcal F_{\alpha;\sigma(p,q)}^{p^*,q^*}}
\lesssim_{\alpha,p,q}
\|M_{\Omega,\Lambda}d\|_{\ell_{w_\nu}^{p^*,q^*}(\Omega)}.
\]
Combining this with \eqref{eq:weighted-lower-synthesis}, we obtain
\[
\|d\|_{\ell_{w_\nu}^{p^*,q^*}(\Lambda)}
\le
C_0
\left\|
\sum_{\lambda\in\Lambda}
d_\lambda\kappa_{\alpha,\lambda}
\right\|_{\mathcal F_{\alpha;\sigma(p,q)}^{p^*,q^*}}
\lesssim_{\alpha,p,q}
C_0
\|M_{\Omega,\Lambda}d\|_{\ell_{w_\nu}^{p^*,q^*}(\Omega)}.
\]
The matrix $M_{\Omega,\Lambda}$ is rapidly localized, so
Theorem~\ref{thm:localized-lower-stability-transfer} yields
\[
\|d\|_{\ell^2(\Lambda)}
\lesssim_{\alpha,p,q,C_0}
\|M_{\Omega,\Lambda}d\|_{\ell^2(\Omega)}.
\]
Finally, by the upper Hilbert frame bound for $\Omega$,
\[
\|M_{\Omega,\Lambda}d\|_{\ell^2(\Omega)}
=
\left\|
C_\Omega
\left(
\sum_{\lambda\in\Lambda}
d_\lambda\kappa_{\alpha,\lambda}
\right)
\right\|_{\ell^2(\Omega)}
\lesssim
\left\|
\sum_{\lambda\in\Lambda}
d_\lambda\kappa_{\alpha,\lambda}
\right\|_{\mathcal F_\alpha^2}.
\]
The conclusion follows.
\end{proof}

\section{Necessity}
\label{sec:necessity}

We now prove the necessity directions of the main theorems.
Sampling is reduced to the Hilbert setting through a separated
carrier and Theorem~\ref{thm:sampling-hilbert-transfer}, while
interpolation is reduced through finite-support norming and
Theorem~\ref{thm:weighted-synthesis-transfer}. The classical
Hilbert density theorem then gives the strict density conditions.

\subsection{Sampling necessity}
\label{subsec:sampling-necessity}

\begin{theorem}
\label{thm:sampling-necessity}
Let $0<p,q\le\infty$, and let $\Lambda\subset\mathbb C$ be locally
finite. If $\Lambda$ is sampling for $\mathcal F_\alpha^{p,q}$, then
\[
D^-_{\mathrm{sep}}(\Lambda)>\frac{\alpha}{\pi}.
\]
If $0<p<\infty$, then $\Lambda$ is also relatively separated.
\end{theorem}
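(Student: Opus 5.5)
The plan chains together three results already proved, then adds one perturbation step that produces the strict inequality. Relative separation for $0<p<\infty$ is immediate: the upper sampling inequality says $R_{\alpha,\Lambda}^{p,q}$ is bounded, and Proposition~\ref{prop:local-geometric-consequences}(a) then gives relative separation. For the density claim, Proposition~\ref{prop:separated-sampling-carrier} first provides a $\delta$-separated subset $\Gamma_\delta\subset\Lambda$ that keeps the lower sampling inequality \eqref{eq:carrier-lower-sampling}. Theorem~\ref{thm:sampling-hilbert-transfer} applied to $\Gamma=\Gamma_\delta$ gives $\|f\|_{\mathcal F_\alpha^2}\lesssim\|C_{\Gamma_\delta}f\|_{\ell^2(\Gamma_\delta)}$. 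Since $\Gamma_\delta$ is separated, the Bessel bound holds, so $\Gamma_\delta$ is sampling for $\mathcal F_\alpha^2$. Theorem~\ref{thm:classical-hilbert-density}(i) then yields $D^-(\Gamma_\delta)>\frac{\alpha}{\pi}$, and therefore $D^-_{\mathrm{sep}}(\Lambda)\ge D^-(\Gamma_\delta)>\frac{\alpha}{\pi}$.

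Strictness is already built in: the Hilbert theorem gives a strict inequality for the separated carrier, and $D^-_{\mathrm{sep}}$ is a supremum over separated subsets that includes $\Gamma_\delta$. No perturbation argument in the density itself is needed. The extra step I mentioned is only checking that the hypotheses of the earlier results hold uniformly.

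The one point to verify is that Proposition~\ref{prop:separated-sampling-carrier} applies in the case $p=\infty$ without relative separation. Its proof appeals to Proposition~\ref{prop:carrier-compression}, and for $p=\infty$ that proposition assumes only local finiteness. Neither \eqref{eq:carrier-assignment-norm} nor the $Q_\gamma$ bound uses $N_\Lambda$ when $p=\infty$, so the case goes through. I expect this to be the only delicate step. Everything else is already packaged: the hard analytic work of transferring mixed lower stability to $\ell^2$ sits in Theorem~\ref{thm:localized-lower-stability-transfer}, which Theorem~\ref{thm:sampling-hilbert-transfer} invokes.
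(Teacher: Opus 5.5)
Your proposal is correct and follows the paper's proof step for step. Relative separation for $p<\infty$ comes from Proposition~\ref{prop:local-geometric-consequences}(a). The separated carrier $\Gamma_\delta$ comes from Proposition~\ref{prop:separated-sampling-carrier}, which, as you verify, needs only local finiteness when $p=\infty$. Theorem~\ref{thm:sampling-hilbert-transfer} together with the Bessel bound makes $\Gamma_\delta$ Hilbert sampling. Theorem~\ref{thm:classical-hilbert-density}(i) then gives $D^-(\Gamma_\delta)>\frac{\alpha}{\pi}$, and $D^-_{\mathrm{sep}}(\Lambda)\ge D^-(\Gamma_\delta)$ finishes the argument.

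As you conclude yourself, no perturbation step is needed. You can drop the opening mention of one.
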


\begin{proof}
Suppose that $\Lambda$ is sampling for $\mathcal F_\alpha^{p,q}$.
If $0<p<\infty$, the upper sampling inequality and
Proposition~\ref{prop:local-geometric-consequences} imply that $\Lambda$
is relatively separated.

\smallskip
\noindent
By Proposition~\ref{prop:separated-sampling-carrier}, there exist
$0<\delta<1$ and a $\delta$-separated subset
$\Gamma_\delta\subset\Lambda$ such that
\[
\|f\|_{\mathcal F_\alpha^{p,q}}
\lesssim_{\alpha,p,q}
\|R_{\alpha,\Gamma_\delta}^{p,q}f\|_{\ell^{p,q}(\Gamma_\delta)},
\qquad
f\in\mathcal F_\alpha^{p,q}.
\]
Theorem~\ref{thm:sampling-hilbert-transfer} therefore gives
\[
\|f\|_{\mathcal F_\alpha^2}
\lesssim
\|C_{\Gamma_\delta}f\|_{\ell^2(\Gamma_\delta)},
\qquad
f\in\mathcal F_\alpha^2.
\]
Since $\Gamma_\delta$ is separated, hence relatively separated,
Proposition~\ref{prop:upper-restriction}, with $p=q=2$, gives
\[
\|C_{\Gamma_\delta}f\|_{\ell^2(\Gamma_\delta)}
\lesssim
\|f\|_{\mathcal F_\alpha^2}.
\]
Combined with the preceding lower estimate, this shows that
$\Gamma_\delta$ is sampling for $\mathcal F_\alpha^2$.
By Theorem~\ref{thm:classical-hilbert-density},
\[
D^-(\Gamma_\delta)>\frac{\alpha}{\pi}.
\]
Since $\Gamma_\delta\subset\Lambda$ is separated,
\[
D^-_{\mathrm{sep}}(\Lambda)
\ge D^-(\Gamma_\delta)
>\frac{\alpha}{\pi}.
\]
\end{proof}

\subsection{Interpolation necessity}
\label{subsec:interpolation-necessity}

We now prove interpolation necessity for both
$\mathcal F_\alpha^{p,q}$ and the little endpoint
$f_\alpha^{p,\infty}$. We use the controlled-preimage consequence
of interpolation recorded in Remark~\ref{rem:uniform-interpolation}.
The following finite-support norming identity replaces full duality
of the mixed sequence spaces in the argument.

\begin{lemma}
\label{lem:finite-support-norming}
Let $\Lambda\subset\mathbb C$ be locally finite and
$0<p,q\le\infty$.  For every $b\in c_{00}(\Lambda)$,
\begin{equation*}
\label{eq:finite-support-norming}
\sup_{\substack{a\in c_{00}(\Lambda)\\
\|a\|_{\ell^{p,q}(\Lambda)}\le1}}
\left|
\sum_{\lambda\in\Lambda}
a_\lambda\overline{b_\lambda}
\right|
=
\|b\|_{\ell^{p^*,q^*}(\Lambda)}.
\end{equation*}
\end{lemma}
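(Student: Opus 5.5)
Since $b$ is finitely supported, only finitely many shells $\Lambda_k$ meet its support, say $k\in K$ with $K$ finite. I will show the two inequalities separately; it suffices to take $a$ supported on $\operatorname{supp}b$, because replacing $a$ by $a\mathbf 1_{\operatorname{supp} b}$ does not change the pairing and does not increase $\|a\|_{\ell^{p,q}}$ (solidity). Writing $b_k:=b|_{\Lambda_k}$, $a_k:=a|_{\Lambda_k}$, $\beta_k:=\|b_k\|_{\ell^{p^*}}$ and $\alpha_k:=\|a_k\|_{\ell^p}$, the pairing splits as $\sum_{k\in K}\sum_{\lambda\in\Lambda_k}a_\lambda\overline{b_\lambda}$.

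For the upper bound, I first apply the inner estimate $|\sum_{\lambda\in\Lambda_k}a_\lambda\overline{b_\lambda}|\le\alpha_k\beta_k$ on each shell. For $1\le p\le\infty$ this is Hölder's inequality. For $0<p<1$ we have $p^*=\infty$, and the bound follows from $\|a_k\|_{\ell^1}\le\|a_k\|_{\ell^p}$. Then the same argument applied to the outer sums gives $\sum_k\alpha_k\beta_k\le\|(\alpha_k)\|_{\ell^q}\|(\beta_k)\|_{\ell^{q^*}}$, which is again Hölder's inequality if $q\ge1$, and the embedding $\ell^q\subset\ell^1$ with $q^*=\infty$ if $q<1$. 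Since $\|(\alpha_k)\|_{\ell^q}=\|a\|_{\ell^{p,q}}\le1$, this gives $\le\|b\|_{\ell^{p^*,q^*}}$.

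For the lower bound I construct an explicit extremizer in $c_{00}$, which is automatic because everything lives on $\operatorname{supp}b$. I proceed in two layers, each by the finite-dimensional equality case of the preceding step.

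The \emph{inner layer}: for each $k$ choose $u^{(k)}$ on $\operatorname{supp} b_k$ with $\|u^{(k)}\|_{\ell^p}=1$ and $\sum u^{(k)}_\lambda\overline{b_\lambda}=\beta_k$. For $1<p<\infty$ take $u_\lambda\propto|b_\lambda|^{p^*-1}\operatorname{sgn}b_\lambda$. For $p=1$ put the mass at a point where $|b_\lambda|$ is maximal. For $p\le1$ the same single-point choice works, since $\|e_\lambda\|_{\ell^p}=1$ for every $p$. For $p=\infty$ take $u_\lambda=\operatorname{sgn}b_\lambda$.

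The \emph{outer layer}: choose nonnegative weights $t_k$ with $\|(t_k)\|_{\ell^q}=1$ and $\sum t_k\beta_k=\|(\beta_k)\|_{\ell^{q^*}}$, using the analogous choices: $t_k\propto\beta_k^{q^*-1}$ for $1<q<\infty$, a single maximizing index for $q\le1$, and $t_k=1$ on $K$ for $q=\infty$.

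Setting $a:=\sum_k t_ku^{(k)}$, I check that $\|a\|_{\ell^{p,q}}=\|(t_k)\|_{\ell^q}=1$ and that the pairing equals $\sum_k t_k\beta_k=\|b\|_{\ell^{p^*,q^*}}$.

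There is no real obstacle here; the only point requiring care is the bookkeeping of the generalized conjugate in the quasi-Banach cases $p<1$ or $q<1$. There the $\ell^\infty$ dual norm must be attained by point masses, which works because unit vectors have norm one in every $\ell^p$. I also need the convention that empty shells, and $b=0$, are handled trivially, and that infinite exponents ($q=\infty$ with finitely many nonzero $\beta_k$) cause no issue since $K$ is finite.
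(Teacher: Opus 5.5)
Your proposal is correct and follows essentially the same route as the paper. The upper bound comes from blockwise H\"older (or $\|a_k\|_{\ell^1}\le\|a_k\|_{\ell^p}$ when $p<1$) followed by the same estimate in the outer index, and the lower bound from a two-layer extremizer (outer weights $t_k$, inner phase-aligned norming vectors on each shell); your explicit formulas for the extremizers just make concrete what the paper cites as the finite-dimensional norming identity.
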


\begin{proof}
For a finite set $E$, the finite-dimensional norming identity gives
\[
\sup_{\|x\|_{\ell^p(E)}\le1}
\left|
\sum_{j\in E}x_j\overline{y_j}
\right|
=
\|y\|_{\ell^{p^*}(E)}.
\]
For $1<p\le\infty$ this is the usual finite-dimensional duality, while
for $0<p\le1$ the supremum is $\|y\|_{\ell^\infty(E)}$.

\smallskip
\noindent
Set
\[
u_k:=\|a|_{\Lambda_k}\|_{\ell^p},
\qquad
v_k:=\|b|_{\Lambda_k}\|_{\ell^{p^*}}.
\]
Applying the preceding identity on each annular block gives
\[
\left|
\sum_{\lambda\in\Lambda}
a_\lambda\overline{b_\lambda}
\right|
\le
\sum_{k\ge0}
\left|
\sum_{\lambda\in\Lambda_k}
a_\lambda\overline{b_\lambda}
\right|
\le
\sum_{k\ge0}u_kv_k.
\]
Since both sequences are finitely supported, applying the corresponding
finite-dimensional norming identity with exponent $q$ yields
\[
\sup_{\substack{a\in c_{00}(\Lambda)\\
\|a\|_{\ell^{p,q}(\Lambda)}\le1}}
\left|
\sum_{\lambda\in\Lambda}
a_\lambda\overline{b_\lambda}
\right|
\le
\|b\|_{\ell^{p^*,q^*}(\Lambda)}.
\]
Conversely, choose a finitely supported nonnegative sequence $(c_k)_{k \geq 0}$ with
\[
\|(c_k)_{k \geq 0}\|_{\ell^q}\le1,
\qquad
\sum_{k\ge0}c_kv_k
=
\|(v_k)_{k \geq 0}\|_{\ell^{q^*}}.
\]
For each $k$ with $v_k>0$, choose
$a|_{\Lambda_k}$ with
\(
\|a|_{\Lambda_k}\|_{\ell^p}=c_k
\)
that norms $b|_{\Lambda_k}$, with the block phases aligned. Then
\[
\|a\|_{\ell^{p,q}(\Lambda)}\le1 \qquad \text{and} \qquad
\left|
\sum_{\lambda\in\Lambda}
a_\lambda\overline{b_\lambda}
\right|
=
\sum_{k\ge0}c_kv_k
=
\|b\|_{\ell^{p^*,q^*}(\Lambda)}.
\]
\end{proof}

\begin{theorem}
\label{thm:interpolation-necessity}
Let $\Lambda\subset\mathbb C$ be locally finite.

\begin{enumerate}[label=\textnormal{(\alph*)}]
\item
Let $0<p,q\le\infty$. If $\Lambda$ is interpolating for
$\mathcal F_\alpha^{p,q}$, then $\Lambda$ is separated and
\(
D^+(\Lambda)<\frac{\alpha}{\pi}.
\)

\smallskip
\noindent

\item
Let $0<p\le\infty$. If $\Lambda$ is interpolating for
$f_\alpha^{p,\infty}$, then $\Lambda$ is separated and
\(
D^+(\Lambda)<\frac{\alpha}{\pi}.
\)
\end{enumerate}
\end{theorem}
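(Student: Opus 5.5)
Separation is already given by Proposition~\ref{prop:local-geometric-consequences}(b), in both (a) and (b). The remaining task is to show that $\Lambda$ is interpolating for $\mathcal F_\alpha^2$; Theorem~\ref{thm:classical-hilbert-density}(ii) then yields $D^+(\Lambda)<\frac{\alpha}{\pi}$. To get there, I will derive the weighted lower synthesis estimate \eqref{eq:weighted-lower-synthesis} and apply Theorem~\ref{thm:weighted-synthesis-transfer}. This gives
\[
\|d\|_{\ell^2(\Lambda)}\lesssim\Bigl\|\sum_\lambda d_\lambda\kappa_{\alpha,\lambda}\Bigr\|_{\mathcal F_\alpha^2}
\]
for finitely supported $d$. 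By density, and because the Bessel property makes $C_\Lambda^*$ bounded, this says that $C_\Lambda^*$ is bounded below, i.e.\ the normalized kernels form a Riesz sequence. That is exactly Hilbert interpolation, as recorded in Subsection~\ref{SS:HilbertFock}.

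To derive \eqref{eq:weighted-lower-synthesis}, fix $d\in c_{00}(\Lambda)$ and set $g:=\sum_\lambda d_\lambda\kappa_{\alpha,\lambda}$, which is a finite sum in every weighted space. Define $b_\lambda:=d_\lambda(1+|\lambda|)^{\nu(p,q)}$, where $\nu(p,q)=\frac1q-\frac1p$; then $b\in c_{00}(\Lambda)$. Take any $a\in c_{00}(\Lambda)$ with $\|a\|_{\ell^{p,q}(\Lambda)}\le1$; finitely supported data lie in $\ell^{p,q}$ and also in $\ell_0^{p,\infty}$, so the argument covers case (b) as well. The controlled-preimage property of Remark~\ref{rem:uniform-interpolation} gives $f$ in $\mathcal F_\alpha^{p,q}$ (respectively $f_\alpha^{p,\infty}\subset\mathcal F_\alpha^{p,\infty}$) with $R^{p,q}_{\alpha,\Lambda}f=a$ and $\|f\|\le C$. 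The reproducing property of the pairing, $\langle f,\kappa_{\alpha,\lambda}\rangle_\alpha=f(\lambda)e^{-\frac\alpha2|\lambda|^2}$, together with the restriction identity $f(\lambda)e^{-\frac\alpha2|\lambda|^2}=a_\lambda(1+|\lambda|)^{-\nu(p,q)}$, gives
\[
\langle f,g\rangle_\alpha=\sum_\lambda \overline{d_\lambda}\,a_\lambda(1+|\lambda|)^{-\nu(p,q)}.
\]
One must check that this pairing identity holds for $f$ outside $\mathcal F^2_\alpha$. Since $g$ is a finite combination of kernels, it suffices to know the reproducing formula for each single kernel, and this follows from absolute convergence via (F5) and the standard Fock reproducing formula, which is valid for entire functions of this growth.

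Now (F5) bounds the pairing by $C\|g\|_{\mathcal F^{p^*,q^*}_{\alpha;\sigma(p,q)}}$. The weights need matching: $\overline{d_\lambda}(1+|\lambda|)^{-\nu(p,q)}=\overline{b'_\lambda}$, where $b'_\lambda:=d_\lambda(1+|\lambda|)^{-\nu(p,q)}$. I therefore use $b'$ rather than $b$, taking the supremum over $a$ and applying Lemma~\ref{lem:finite-support-norming}. This yields
\[
\|b'\|_{\ell^{p^*,q^*}}\lesssim\|g\|_{\mathcal F^{p^*,q^*}_{\alpha;\sigma(p,q)}}.
\]
By the point--shell comparison, $\|b'\|_{\ell^{p^*,q^*}}\asymp\|d\|_{\ell_{w_{\nu}}^{p^*,q^*}}$ with $\nu=-\nu(p,q)$. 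By \eqref{eq:duality-correction}, the space $\ell_{w_\nu}^{p^*,q^*}$ is exactly $\ell^{p^*,q^*}_{\mathrm{nat};\sigma(p,q)}(\Lambda)$, so \eqref{eq:weighted-lower-synthesis} holds.

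The main obstacle I expect is the pairing step when $q=\infty$ or in the quasi-Banach range. There, the justification of $\langle f,\kappa_{\alpha,\lambda}\rangle_\alpha=f(\lambda)e^{-\frac\alpha2|\lambda|^2}$ needs care, since we only know absolute convergence. I would justify it by applying the dilations $f_\tau$, which lie in $\mathcal F^2_\alpha$ and for which the formula is the Hilbert reproducing identity. Then I would let $\tau\uparrow1$ using dominated convergence, with the majorant coming from the point-evaluation bound \eqref{eq:intro-point-evaluation} against the Gaussian decay of $\kappa_{\alpha,\lambda}e^{-\frac\alpha2|z|^2}$.
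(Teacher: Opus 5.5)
Your proposal is correct and follows the paper's proof essentially step for step. You get separation from Proposition~\ref{prop:local-geometric-consequences}, controlled preimages from the open mapping theorem, and then \eqref{eq:weighted-lower-synthesis} from (F5) combined with Lemma~\ref{lem:finite-support-norming}; your corrected test sequence $b'$ is exactly the paper's $b$, with the same $d$. The remaining steps, Theorem~\ref{thm:weighted-synthesis-transfer}, the Bessel bound and Theorem~\ref{thm:classical-hilbert-density}, are the paper's as well. The one addition is your dilation and dominated-convergence argument for $\langle f,\kappa_{\alpha,\lambda}\rangle_\alpha=f(\lambda)e^{-\frac{\alpha}{2}|\lambda|^2}$ when $f\notin\mathcal F_\alpha^2$; the paper simply invokes this as ``the reproducing identity'', and your argument is a valid way to justify it.
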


\begin{proof}
Under either hypothesis,
Proposition~\ref{prop:local-geometric-consequences} shows that $\Lambda$
is separated.

\smallskip
\noindent
For (a), since $\Lambda$ is interpolating for $\mathcal F_\alpha^{p,q}$, the quasi-Banach open mapping theorem argument in Remark~\ref{rem:uniform-interpolation} shows that, for
every $a\in c_{00}(\Lambda) \subset \ell^{p,q}(\Lambda)$, one may choose
$f_a\in\mathcal F_\alpha^{p,q}$ such that
\[
R_{\alpha,\Lambda}^{p,q}f_a=a,
\qquad
\|f_a\|_{\mathcal F_\alpha^{p,q}}
\lesssim_{\alpha,p,q}
\|a\|_{\ell^{p,q}(\Lambda)}.
\]
Fix $b\in c_{00}(\Lambda)$ and set
\[
g_b
:=
\sum_{\lambda\in\Lambda}
b_\lambda(1+|\lambda|)^{\nu(p,q)}
\kappa_{\alpha,\lambda}.
\]
If $\|a\|_{\ell^{p,q}(\Lambda)}\le1$, then
\[
a_\lambda
=
f_a(\lambda)e^{-\frac{\alpha}{2}|\lambda|^2}
(1+|\lambda|)^{\nu(p,q)}.
\]
Hence the reproducing identity and the forward Gaussian pairing (F5) give
\[
\left|
\sum_{\lambda\in\Lambda}
a_\lambda\overline{b_\lambda}
\right|
=
|\langle f_a,g_b\rangle_\alpha|
\lesssim
\|g_b\|_{\mathcal F_{\alpha;\sigma(p,q)}^{p^*,q^*}}.
\]
Taking the supremum over such $a$ and applying
Lemma~\ref{lem:finite-support-norming}, we obtain
\[
\|b\|_{\ell^{p^*,q^*}(\Lambda)}
\lesssim
\|g_b\|_{\mathcal F_{\alpha;\sigma(p,q)}^{p^*,q^*}}.
\]
Now set
\[
d_\lambda
:=
b_\lambda(1+|\lambda|)^{\nu(p,q)}.
\]
By the point--shell comparison and \eqref{eq:duality-correction},
\[
\|b\|_{\ell^{p^*,q^*}(\Lambda)}
\asymp_{p,q}
\|d\|_{\ell_{\mathrm{nat};\sigma(p,q)}^{p^*,q^*}(\Lambda)},
\]
and therefore
\[
\|d\|_{\ell_{\mathrm{nat};\sigma(p,q)}^{p^*,q^*}(\Lambda)}
\lesssim
\left\|
\sum_{\lambda\in\Lambda}
d_\lambda\kappa_{\alpha,\lambda}
\right\|_{\mathcal F_{\alpha;\sigma(p,q)}^{p^*,q^*}}.
\]
Theorem~\ref{thm:weighted-synthesis-transfer} now yields
\[
\|d\|_{\ell^2(\Lambda)}
\lesssim
\left\|
\sum_{\lambda\in\Lambda}
d_\lambda\kappa_{\alpha,\lambda}
\right\|_{\mathcal F_\alpha^2},
\qquad
d\in c_{00}(\Lambda).
\]

\smallskip
\noindent
For (b), since $\Lambda$ is interpolating for
$f_\alpha^{p,\infty}$, the quasi-Banach open mapping theorem
argument in Remark~\ref{rem:uniform-interpolation} shows that, for
every
\(
a\in c_{00}(\Lambda)\subset\ell_0^{p,\infty}(\Lambda),
\)
one may choose $f_a\in f_\alpha^{p,\infty}$ such that
\[
R_{\alpha,\Lambda}^{p,\infty}f_a=a,
\qquad
\|f_a\|_{\mathcal F_\alpha^{p,\infty}}
\lesssim_{\alpha,p}
\|a\|_{\ell^{p,\infty}(\Lambda)}.
\]
Repeating the preceding finite-support norming argument with
$q=\infty$ and
\[
d_\lambda:=b_\lambda(1+|\lambda|)^{\nu(p,\infty)},
\]
gives
\[
\|d\|_{\ell_{\textnormal{nat}; \sigma(p, \infty)}^{p^*,1}(\Lambda)}
\lesssim
\left\|
\sum_{\lambda\in\Lambda}
d_\lambda\kappa_{\alpha,\lambda}
\right\|_{\mathcal F_{\alpha;\sigma(p,\infty)}^{p^*,1}},
\qquad
d\in c_{00}(\Lambda).
\]
Applying Theorem~\ref{thm:weighted-synthesis-transfer} with
$q=\infty$ gives the same Hilbert lower-synthesis estimate.

\smallskip
\noindent
Thus, in either case,
\[
\|d\|_{\ell^2(\Lambda)}
\lesssim
\left\|
\sum_{\lambda\in\Lambda}
d_\lambda\kappa_{\alpha,\lambda}
\right\|_{\mathcal F_\alpha^2},
\qquad
d\in c_{00}(\Lambda).
\]
Since $\Lambda$ is separated, the Hilbert Bessel estimate recalled
in Subsection~\ref{SS:HilbertFock} gives
\[
\left\|
\sum_{\lambda\in\Lambda}
d_\lambda\kappa_{\alpha,\lambda}
\right\|_{\mathcal F_\alpha^2}
\lesssim
\|d\|_{\ell^2(\Lambda)},
\qquad
d\in c_{00}(\Lambda).
\]
Together with the preceding lower synthesis estimate, this shows that
$(\kappa_{\alpha,\lambda})_{\lambda\in\Lambda}$ is a Riesz sequence
in $\mathcal F_\alpha^2$. By the Hilbert interpolation--Riesz
sequence equivalence recalled in the same subsection, $\Lambda$ is
interpolating for $\mathcal F_\alpha^2$. Hence
Theorem~\ref{thm:classical-hilbert-density} gives
\[
D^+(\Lambda)<\frac{\alpha}{\pi}.
\]
\end{proof}

\section*{Acknowledgements}

X.~F. is
supported by a grant from the National Science and Technology Council, Taiwan
(NSTC 114-2115-M-A49-003-MY3).

\bigskip
\textit{AI Statement.}
The authors used artificial-intelligence tools for language editing,
\LaTeX\ formatting, and limited assistance with local mathematical
reasoning.  The proof strategy and mathematical development are the
authors' own; all mathematical content was independently written and checked by the authors, who take full responsibility for it.


\begin{thebibliography}{99}

\bibitem{BerndtssonOrtega1995}
B. Berndtsson and J. Ortega-Cerd\`a,
\emph{On interpolation and sampling in Hilbert spaces of analytic functions},
J. Reine Angew. Math. 464 (1995), 109--128.

\bibitem{BG24}
O.~Blasco and A.~Galbis,
\emph{Boundedness and compactness of Hausdorff operators on Fock spaces},
Trans. Amer. Math. Soc. \textbf{377} (2024), 5165--5196.

\bibitem{BorichevDhuezKellay2007}
A. Borichev, R. Dhuez, and K. Kellay,
\emph{Sampling and interpolation in large Bergman and Fock spaces},
J. Funct. Anal. 242 (2007), no. 2, 563--606.

\bibitem{CP16}
O.~Constantin and J.A.~Pel\'aez,
\emph{Integral operators, embedding theorems and a Littlewood--Paley formula on weighted Fock spaces},
J. Geom. Anal. \textbf{26} (2016), no.~2, 1109--1154.

\bibitem{FT23}
X.~Fang and P.T.~Tien,
\emph{Two problems on random analytic functions in Fock spaces},
Canad. J. Math. \textbf{75} (2023), no.~4, 1176--1198.

\bibitem{FT26}
X.~Fang and P.T.~Tien,
\emph{Sharp Gaussian mixed-norm theory for Fock spaces},
submitted manuscript, 2026.

\bibitem{GrochenigHaimiOrtegaRomero2019}
K. Gr\"ochenig, A. Haimi, J. Ortega-Cerd\`a, and J. L. Romero,
\emph{Strict density inequalities for sampling and interpolation in weighted spaces of holomorphic functions},
J. Funct. Anal. 277 (2019), no. 12, 108282.

\bibitem{Lindholm2001}
N. Lindholm,
\emph{Sampling in weighted $L^p$ spaces of entire functions in $\mathbb C^n$
and estimates of the Bergman kernel},
J. Funct. Anal. 182 (2001), no. 2, 390--426.

\bibitem{Liu2024}
Y.~Liu,
\emph{Fock projections on mixed norm spaces},
Mediterr. J. Math. \textbf{21} (2024), no.~6, Paper No.~171.

\bibitem{Liu2025}
Y.~Liu,
\emph{Hausdorff operators on weighted mixed norm Fock spaces},
Bull. Malays. Math. Sci. Soc. \textbf{48} (2025), no.~2, Paper No.~48.

\bibitem{LiuShiWei2027}
Y. Liu, H. Shi, and S. Wei,
\emph{Boundedness of Fock projections between mixed norm spaces},
J. Math. Anal. Appl. 565 (2027), no. 1, Paper No. 130921.

\bibitem{Lyubarskii1992}
Y.~I. Lyubarskii,
\emph{Frames in the Bargmann space of entire functions},
in \emph{Entire and Subharmonic Functions}, Adv. Soviet Math., vol.~11,
Amer. Math. Soc., Providence, RI, 1992, pp.~167--180.

\bibitem{MarcoMassanedaOrtega2003}
N.~Marco, X.~Massaneda, and J.~Ortega-Cerd\`a,
\emph{Interpolating and sampling sequences for entire functions},
Geom. Funct. Anal. \textbf{13} (2003), no.~4, 862--914.

\bibitem{NguyenLuecking2018}
P.~K. Nguyen and D.~H. Luecking,
\emph{Interpolation and sampling sequences for mixed-norm spaces},
arXiv:1801.07761, 2018.

\bibitem{OrtegaCerdaSeip1998}
J. Ortega-Cerd\`a and K. Seip,
\emph{Beurling-type density theorems for weighted $L^p$ spaces of entire functions},
J. Anal. Math. 75 (1998), 247--266.

\bibitem{Seip1992}
K.~Seip,
\emph{Density theorems for sampling and interpolation in the
Bargmann--Fock space. I},
J. Reine Angew. Math. \textbf{429} (1992), 91--106.

\bibitem{SeipWallsten1992}
K.~Seip and R.~Wallst\'en,
\emph{Density theorems for sampling and interpolation in the
Bargmann--Fock space. II},
J. Reine Angew. Math. \textbf{429} (1992), 107--113.

\bibitem{ShinSun2009}
C.~E. Shin and Q.~Sun,
\emph{Stability of localized operators},
J. Funct. Anal. \textbf{256} (2009), no.~8, 2417--2439.

\bibitem{SignahlToft2012}
M.~Signahl and J.~Toft,
\emph{Mapping properties for the Bargmann transform on modulation spaces},
J. Pseudo-Differ. Oper. Appl. \textbf{3} (2012), no.~1, 1--30.

\bibitem{Sun2007}
Q.~Sun,
\emph{Wiener's lemma for infinite matrices},
Trans. Amer. Math. Soc. \textbf{359} (2007), no.~7, 3099--3123.

\bibitem{Zhu2012}
K.~Zhu,
\emph{Analysis on Fock Spaces},
Graduate Texts in Mathematics, vol.~263, Springer, New York, 2012.

\end{thebibliography}
\end{document}